\newdimen\bibspace
\numberwithin{equation}{section}
\newtheorem{theorem}{Theorem}[section]
\newtheorem{lemma}[theorem]{Lemma}
\newtheorem{corollary}[theorem]{Corollary}
\newtheorem{remark}[theorem]{Remark}
\def\<{\langle}
\def\>{\rangle}
\begin{document}

\title{Existence of entire solutions to the Lagrangian mean curvature equations in supercritical phase}

\author{Zixiao Liu, Cong Wang, Jiguang Bao\footnote{Corresponding author J. Bao is supported by the National Key Research and Development Program of China (No. 2020YFA0712904) and the Beijing Natural Science Foundation (No. 1222017). The first author Z. Liu is supported by the China Postdoctoral Science Foundation (No. 2022M720327).}}
\date{\today}

\maketitle

\begin{abstract}
In this paper, we establish the existence and uniqueness theorem of entire solutions to the Lagrangian mean curvature equations with prescribed asymptotic behavior at infinity. The phase functions are assumed to be supercritical and converge to a constant in a certain rate at infinity. The basic idea is to establish uniform estimates for the approximating problems defined on bounded domains and the main ingredient is to construct appropriate subsolutions and supersolutions as barrier functions.
We also prove a nonexistence result to show the convergence rate of the phase functions is optimal.
 \\[1mm]
 {\textbf{Keywords:}}  Lagrangian
mean curvature equation, Entire solutions, Existence.
\\[1mm]
{\textbf{MSC~(2020):}} 35A01;~35J60;~35B40.
\end{abstract}

\section{Introduction}

We mainly focus on entire solutions to the Lagrangian mean curvature equation
\begin{equation}\label{equ-Lagrangian}
  \sum_{i=1}^n\arctan\lambda_i(D^2u)=g,
\end{equation}
where $\lambda_i(D^2u)$ denotes $n$ eigenvalues of the Hessian matrix $D^2u$ and $g$ is a function usually known as \textit{phase} function or \textit{Lagrangian phase} function.
Furthermore, we investigate the  case  $|g|>\frac{(n-2)\pi}{2}$, which provides the concavity of the operator \cite{Yuan-GlobalSolution-SPL} and is referred to  as the supercritical phase condition
\cite{Bhattacharya-Monney-Shankar-GradientEsti-Critical,
Collins-Picard-Wu-ConcavityLagrangian,Lu-Dirichlet-Lagrangian,Yuan-GlobalSolution-SPL}.
Especially when $g$ is a constant, equation \eqref{equ-Lagrangian} is also known as the special Lagrangian equation.
Geometrically, $u$ being a solution to \eqref{equ-Lagrangian} implies that the gradient graph $(x,Du(x))$ has mean curvature $(0,0,\cdots,0,Dg(x))^{\perp}$, see for instance Harvey--Lawson \cite{Harvey-Lawson-CalibratedGeometries} and
Wang--Huang--Bao \cite{Chong-Rongli-Bao-SecondBoundary-SPL} for more details.
We say $u$ is an entire solution if it satisfies equation \eqref{equ-Lagrangian} in the whole Euclidean space $\mathbb R^n.$

Fully nonlinear elliptic equations that rely only on eigenvalues of the Hessian,  such as
\begin{equation}\label{equ-generalEQU}
f(\lambda(D^2u))=g,
\end{equation}
where $\lambda=(\lambda_1,\cdots,\lambda_n)$ is the vector formed by eigenvalues of $D^2u$,
have been an important subject in PDE. Typical examples include  the Monge--Amp\`ere equations, the $k$-Hessian equations, the   Lagrangian mean curvature equations, etc.

For the Dirichlet problems of these equations on bounded domains, exterior domains and entire space, there have been much extensive studies.  The Dirichlet problem on exterior domain infers that finding a solution with prescribed value on inner boundary and near infinity
\begin{equation}\label{equ-prescribe}
u(x)-Q(x)=o(1),\quad\text{as }|x|\rightarrow\infty,\quad\text{where}\quad Q(x):=\dfrac{1}{2}x^TAx+bx+c,
\end{equation}
$x^T$ denotes the transpose of $x$, $A\in\mathtt{Sym}(n)$ is a symmetric $n$ by $n$ matrix, $b\in \mathbb R^n$, and $c\in\mathbb R$ such that $f(\lambda(D^2Q))=f(\lambda(A))=g(\infty)$.
For the Monge--Amp\`ere equations, these problems have been well studied  in a sequence of  work by Caffarelli--Nirenberg--Spruck \cite{Caffarelli-Nirenberg-Spruck-DirichletIII} for bounded domains, see also Savin \cite{Savin-MA-PointWiseBoundaryEst}, Trudinger--Wang \cite{Trudinger-Wang-BoundaryRegularity}, Caffarelli--Tang--Wang \cite{Caffarelli-Tang-Wang-GlobalRegular-MA} and the references therein for further discussions. On exterior domains and entire domains, Dirichlet type problems of the Monge--Amp\`ere equations were considered by Caffarelli--Li \cite{Caffarelli-Li-ExtensionJCP,Caffarelli-Li-Liouville-MA-periodic}, Bao--Li--Zhang \cite{Bao-Li-Zhang-ExteriorBerns-MA,Bao-Li-Zhang-GlobalExterioDP-Dim2}, Li--Lu \cite{Li-Lu-ExistandNonExist}, Bao--Xiong--Zhou \cite{Bao-Xiong-Zhou-ExistenceEntireMA} and the references therein. For the Monge--Amp\`ere equations on half space, Liouville type results, asymptotics at infinity and solvability of Dirichlet type problems, we refer the readers to Savin \cite{Savin-LocalizationThm}, Jia--Li--Li \cite{Jia-Li-Li-AsympMA-halfspace}, Jia--Li \cite{Jia-Li-AsympMA-halfspace}, etc.
For the Lagrangian mean curvature equations, the problems on bounded domains have been partially solved by Collins--Picard--Wu \cite{Collins-Picard-Wu-ConcavityLagrangian}, Battacharya  \cite{Bhattacharya-Dirichlet-LagMeanCurva}, Lu \cite{Lu-Dirichlet-Lagrangian}, Battacharya--Mooney--Shankar \cite{Bhattacharya-Monney-Shankar-GradientEsti-Critical} and the references therein. Exterior Dirichlet problems of the special Lagrangian equations were considered by Li \cite{Li-Dirichlet-SPL}.  Furthermore, we would like to mention that Li--Wang \cite{Li-Wang-EntireDirichlet} obtained solvability of exterior Dirichlet problem of equation \eqref{equ-generalEQU} under some structure conditions on $f$, which includes the Monge--Amp\`ere operators, the $k$-Hessian operators, etc. However, the strategies in the literatures mentioned above are not enough to construct entire solutions to \eqref{equ-Lagrangian}. We overcome the difficulty and develop  new techniques to construct barrier functions, especially on finding  generalized symmetric supersolutions to \eqref{equ-Lagrangian} in entire space with prescribed asymptotic behavior.

Our first result is the existence of solutions to the Lagrangian mean curvature equations in $\mathbb R^n$. Hereinafter, we denote by $\varphi(x)=O_m(|x|^{-\beta}(\ln|x|)^{\zeta})$  the function $\varphi$ satisfying
\[
|D^k\varphi(x)|=O(|x|^{-\beta-k}(\ln|x|)^{\zeta})\quad\text{as }|x|\rightarrow\infty,\quad\forall~k=0,1,\cdots,m,
\]
where $m,\zeta\in\mathbb N$ and $\beta\in\mathbb R$.

\begin{theorem}\label{thm-n>=3}
  Let $n\geq 3$ and $g\in C^{m}(\mathbb R^n)$ satisfy
  \begin{equation}\label{equ-cond-g}
  |g|\in\left(\frac{(n-2)\pi}{2},\frac{n\pi}{2}\right)\quad\text{in }\mathbb R^n,\quad |g(\infty)|>\frac{(n-2)\pi}{2}\quad\text{and}\quad g(x)=g(\infty)+O_m(|x|^{-\beta}),
  \end{equation}
  as $|x|\rightarrow\infty$,
  where  $m\geq 2$ and $\beta>2$. Then for any positive-definite  or negative-definite  matrix $A\in\mathtt{Sym}(n)$ satisfying
\begin{equation}\label{equ-temp-4}
\sum_{i=1}^{n} \arctan \lambda_{i}(A)=g(\infty) \quad \text {and} \quad M(A):=\min_{j,k=1,\cdots,n}\dfrac{1+\lambda_j^2(A)}{2\lambda_k(A)}\cdot \sum_{i=1}^n\dfrac{\lambda_i(A)}{1+\lambda_i^2(A)}>1,
\end{equation}
and any $b\in\mathbb R^n$, $c\in\mathbb R$,
there exists a unique classical solution $u$ to \eqref{equ-Lagrangian} in $\mathbb R^n$
with prescribed asymptotic behavior
\begin{equation}\label{equ-system-Dirich}
    u(x)-\left(\frac{1}{2}x^TAx+bx+c\right)=
    \left\{
  \begin{array}{lllll}
    O_{m+1}(|x|^{2-\min\{\beta,n\}}), & \text{if }\beta\neq n,\\
    O_{m+1}(|x|^{2-n}(\ln|x|)), & \text{if }\beta=n,\\
  \end{array}
  \right.
\end{equation}
as $|x|\rightarrow\infty$. Furthermore,  $u\in C_{loc}^{m+1,\alpha}(\mathbb R^n)$ for any $0<\alpha<1$.
\end{theorem}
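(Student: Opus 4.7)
The plan is to construct a sequence of solutions to approximating Dirichlet problems on balls $B_R\subset\mathbb R^n$, trap them between sub- and supersolutions matching the prescribed asymptotic behavior at infinity, and then pass to the limit $R\to\infty$. Since adding an affine function to $u$ does not change its Hessian, I would first reduce to the case $b=0$, $c=0$ by replacing $u$ with $u-bx-c$, so that the reference quadratic becomes $Q(x)=\tfrac12 x^TAx$.

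The hardest step, and the one where the geometric hypotheses on $A$ and on $g$ come in, will be the construction of the barriers. I would seek smooth functions of the form $\overline u(x)=Q(x)+\psi_+(\rho(x))$ and $\underline u(x)=Q(x)-\psi_-(\rho(x))$, with $\rho(x)=\sqrt{x^TBx}$ for a symmetric positive-definite matrix $B$ chosen in terms of $A$ (a \emph{generalized symmetric} ansatz, as the authors advertise in the abstract), and with $\psi_\pm$ one-variable profiles of size $\rho^{2-\min\{\beta,n\}}$ and an extra $\ln\rho$ factor in the resonant case $\beta=n$. Substituting this ansatz into $\sum\arctan\lambda_i(D^2\cdot)$ and Taylor-expanding around $A$, the leading part is governed by the constant-coefficient elliptic operator $\mathrm{tr}((I+A^2)^{-1}D^2\cdot)$, while the right-hand side picks up the decay $g(x)-g(\infty)=O(|x|^{-\beta})$. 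The condition $M(A)>1$ in \eqref{equ-temp-4} is precisely what I expect to need to solve the reduced ODE with the prescribed decay, since it encodes the spectral balance between the rank-one Hessian corrections produced by the radial ansatz $\psi_\pm(\rho)$ and the trace quantities arising from $(I+A^2)^{-1}$. The supercritical assumption $|g(\infty)|>(n-2)\pi/2$, together with the definiteness of $A$, ensures that $\sum\arctan\lambda_i$ is concave at $A$, so the quadratic remainder in the Taylor expansion carries the correct sign and can be absorbed once the amplitudes of $\psi_\pm$ are fixed. Making the barriers smooth on all of $\mathbb R^n$ (including deep inside) can then be arranged by pasting with shifted copies of $Q$ via $\max$/$\min$ and regularising.

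For every large $R$, I would then solve the Dirichlet problem $\sum\arctan\lambda_i(D^2 u_R)=g$ in $B_R$ with $u_R=Q$ on $\partial B_R$; solvability in $C^{2,\alpha}(\overline{B_R})$ for supercritical phase is available from the works of Collins--Picard--Wu, Bhattacharya, and Lu already cited in the introduction. The comparison principle, valid because the operator is concave in the supercritical range, yields $\underline u\le u_R\le\overline u$ in $B_R$ uniformly in $R$. Interior $C^{2,\alpha}$ estimates for the arctan operator in the supercritical regime then upgrade the uniform $C^0$ bound to uniform $C^{m+1,\alpha}_{loc}$ bounds, so that a diagonal subsequence converges on compact sets to a classical entire solution $u\in C^{m+1,\alpha}_{loc}(\mathbb R^n)$ still trapped between the two barriers. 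This already gives the $C^0$ decay $|u(x)-Q(x)|=O(|x|^{2-\min\{\beta,n\}})$; promoting it to the full $O_{m+1}$ statement for derivatives is a standard Schauder bootstrap on annuli $|x|\sim R$ after rescaling by the factor $R^{-(2-\min\{\beta,n\})}$ and using the corresponding $C^m$ decay of $g-g(\infty)$. Uniqueness is then immediate from the comparison principle, as any two entire solutions with the same asymptotic profile $Q$ differ by a function tending to $0$ at infinity.
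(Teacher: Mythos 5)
Your high-level strategy — approximate on bounded domains with boundary data $Q$, trap between global barriers, use the supercritical-range gradient/Hessian/Schauder estimates, pass to the limit, and bootstrap to get the $O_{m+1}$ statement — matches the paper's scheme, and the uniqueness argument by comparison is the same. However, your barrier construction is genuinely different from the paper's and contains gaps that are not just details.

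The paper builds barriers of the composite (generalized symmetric) form $u(x)=U\bigl(\tfrac12 x^TAx\bigr)$, reducing the inequality $F(D^2u)\gtrless g$ to a first-order ODE for $W=U'$, whose eigenvalue estimates (Lemmas \ref{lem-subsolu-estimate} and \ref{lem-supsolu-estimate-refine}) are the technical core. The fixed point of that ODE has linearized rate $-M(A)$, which is exactly where $M(A)>1$ is used (so that $W\to 1$ fast enough for $\int(1-W)$ to converge). Crucially, the barriers so obtained decay only like $|x|^{2-\min\{2M(A),\beta\}}$, \emph{not} like $|x|^{2-\min\{\beta,n\}}$; the sharp rate in \eqref{equ-system-Dirich} is not read off the barriers, but is supplied separately by the asymptotic classification Lemma \ref{lem-AsymptoticBehavior} (Jia's theorem applied after an Evans--Krylov/Schauder argument shows $D^2u\to A$). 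Your proposal instead writes $\overline u=Q+\psi_+(\rho)$, $\underline u=Q-\psi_-(\rho)$ with $\rho=\sqrt{x^TBx}$ and linearizes $F$ around $A$. This is a different decomposition: the perturbation is additive rather than through a reparametrization of $s$, the relevant linear operator is $\operatorname{tr}\bigl((I+A^2)^{-1}D^2\cdot\bigr)$ rather than the scalar ODE, and (with $B$ proportional to $I+A^2$) the natural decay exponent is $n-2$, not $2M(A)-2$.

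Three concrete gaps. First, concavity of $F$ in the supercritical range gives only the one-sided inequality $F(A+P)\le F(A)+DF(A)[P]$; this helps the \emph{super}solution directly, but for the \emph{sub}solution you need a lower bound on $F(A-D^2\psi_-)$, so the quadratic remainder has the unhelpful sign and must be explicitly dominated by the linear term, not ``carried with the correct sign''. This can perhaps be done for large $|x|$ since the remainder is $O(|x|^{-2\min\{\beta,n\}})$, but it is not automatic. Second, the pasting ``with shifted copies of $Q$'' cannot work: $F(D^2Q)=g(\infty)$, yet near the origin $g$ may exceed $g(\infty)$, so $Q-C$ is not a subsolution there; and in fact for $\psi_-\sim\rho^{-\gamma}$, as $\rho\to 0$ one has one radial eigenvalue of $D^2\underline u$ tending to $-\infty$ and $n-1$ tangential ones tending to $+\infty$, so $F(D^2\underline u)\to\frac{(n-2)\pi}{2}<g(0)$, meaning $\underline u$ \emph{ceases to be a subsolution} near the origin and must be replaced by a genuinely different function, not just a shift of $Q$. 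This is precisely what the paper's ODE construction via $\overline g(s)$, $\underline g(s)$ handles automatically, and it is the hard part. Third, as written your argument never actually uses $M(A)>1$: the exponent your linearized operator produces is $n-2$, independent of $M(A)$. Either your scheme secretly hides a spot where $M(A)>1$ is needed (and then the argument is incomplete), or you have an improvement over the paper --- which you should not expect to fall out of a sketch; the authors explicitly flag the necessity of $M(A)>1$ as open. The asserted $C^0$ decay $O(|x|^{2-\min\{\beta,n\}})$ straight from the barriers is therefore unjustified as stated; in the paper it is a separate theorem, not a by-product of the comparison.
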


By removing a linear function from $u$, hereinafter we may assume without loss of generality that $b=0$ and $c=0$ in \eqref{equ-system-Dirich}. We say a function $u(x)$ is generalized symmetric with respect to $A$ if there exists a scalar function $U$ such that $u(x)=U(\frac{1}{2}x^TAx)$.
Such generalized symmetric functions play an important role in the study of exterior and entire Dirichlet problems, which is pioneered by Bao--Li--Li \cite{Bao-Li-Li-2014}.

The proof of Theorem \ref{thm-n>=3} is separated into three parts. Firstly, we construct generalized symmetric subsolutions and supersolutions in $\mathbb R^n$ with prescribed asymptotic behavior at infinity.
Secondly,
we establish uniform estimates for the approximating problems defined on bounded domains, where the subsolutions and supersolutions obtained in previous step work as barrier functions.
Consequently, we obtain a subsequence of solutions that converges to an entire solution $u_{\infty}$.
Eventually, we finish proving Theorem \ref{thm-n>=3} by removing the linear part of $u_{\infty}$.

Historically for generalized symmetric $u$, estimates of $f(\lambda(D^2u))$ were established by concentrating all terms involving second order derivative $U''$ into one component, such as the following type of estimate in \cite{Li-Wang-EntireDirichlet}
\[
f(\lambda(D^2u))\leq f(a_1U'+(a_n+\delta)x^TAxU'',a_2U',\cdots,a_nU'),
\]
where $a_1\leq a_2\leq \cdots \leq a_n$ are $n$ eigenvalues of $A$ and $\delta>0$. However  as stated in Lemma \ref{lem-supsolu-estimate},  the above estimate holds only for sufficiently large $|x|$ under additional assumptions on $U'$ and $U''$. In order to construct entire supersolutions, we disperse the affection of second order derivatives to all components and obtain the following type estimates as in Lemma \ref{lem-supsolu-estimate-refine},
\[
f(\lambda(D^2u))\leq f(a_1U'+(1+J)a_nx^TAxU'',a_2U'+Ja_nx^TAxU'',\cdots,a_nU'+Ja_nx^TAxU''),
\]
where $J$ is an explicit function relying on $x, A, U', U''$ and $f$. The construction of $J$ is technical to make sure the supersolution  has prescribed asymptotic behavior at infinity.

\begin{remark}
 When $A=\tan\frac{g(\infty)}{n} I$, where $I$ denotes the identity matrix, condition \eqref{equ-temp-4} holds and we have a solution that is asymptotically radially symmetric.
 When $A$ is positive-definite or negative-definite and the minimum or maximum eigenvalue has multiplicity no less than $2$, the condition $M(A)>1$ in \eqref{equ-temp-4} holds.
 Furthermore, when $n=3$ and $g(\infty)\in (\frac{\pi}{2},\pi)$, we consider diagonal matrix $A$ such that
  \[
  \lambda_1(A)=\epsilon>0,\quad\lambda_2(A)=\lambda_3(A)=\tan\left(\frac{g(\infty)-\arctan\epsilon}{2}\right).
  \]
 In this case, the equality in \eqref{equ-temp-4} holds but
  \[
M(A)=\dfrac{1+\epsilon^2}{2\tan\left(\frac{
g(\infty)-\arctan\epsilon}{2}
\right)}\cdot \left(\dfrac{2\tan\left(\frac{
g(\infty)-\arctan\epsilon}{2}
\right)}{1+\left(\tan\left(\frac{
g(\infty)-\arctan\epsilon}{2}
\right)\right)^2}+\dfrac{\epsilon}{1+\epsilon^2}\right)<1
\]
  for sufficiently small $\epsilon$. Hence there are matrixes $A$ fail to satisfy $M(A)>1$ in \eqref{equ-temp-4}. Whether existence results hold in such cases remains a difficult problem, see also similar assumptions on exterior Dirichlet problems in \cite{Li-Wang-EntireDirichlet,Li-Dirichlet-SPL}, etc.
\end{remark}

Our second result focuses on the case when $0<\beta\leq 2$ in \eqref{equ-cond-g}.
We show that the existence result may fail by considering radially symmetric solutions.

\begin{theorem}\label{thm-Optimality}
  Let $n\geq 3$.
  For any $0<\beta\leq 2$ and $m\geq 2$, there exist $g\in C^m(\mathbb R^n)$ satisfying condition \eqref{equ-cond-g}   and $A\in\mathtt{Sym}(n)$ satisfying \eqref{equ-temp-4}
  such that there is no classical solution to \eqref{equ-Lagrangian} in $\mathbb R^n$ satisfying
  \begin{equation}\label{equ-ContraAssum}
  u(x)-\dfrac{1}{2}x^TAx=o(1),\quad\text{as }|x|\rightarrow\infty.
  \end{equation}
\end{theorem}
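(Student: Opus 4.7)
The plan is to exhibit an explicit pair $(g,A)$ and derive a contradiction from the hypothetical existence of $u$, by linearising \eqref{equ-Lagrangian} at $A$ via supercritical-phase concavity of the operator $F(M):=\sum_i\arctan\lambda_i(M)$, then producing a lower bound on the spherical average of $u-\tfrac{1}{2}x^TAx$ that blows up at infinity.

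Choose $\gamma\in(\tfrac{(n-2)\pi}{2},\tfrac{n\pi}{2})$, set $a=\tan(\gamma/n)$ and $A=aI$. Then $A$ is positive-definite, $\sum_i\arctan\lambda_i(A)=\gamma$, and
\[
M(A)=\frac{1+a^2}{2a}\cdot n\cdot\frac{a}{1+a^2}=\frac{n}{2}>1
\]
since $n\ge 3$, so \eqref{equ-temp-4} holds. Define
\[
g(x):=\gamma+\varepsilon(1+|x|^2)^{-\beta/2},
\]
with $\varepsilon>0$ chosen so small that $g<\tfrac{n\pi}{2}$ on $\mathbb R^n$. Then $g\in C^\infty(\mathbb R^n)$, $g(\infty)=\gamma$, $g-\gamma=O_m(|x|^{-\beta})$, and $g>\gamma>\tfrac{(n-2)\pi}{2}$ throughout, so \eqref{equ-cond-g} is satisfied.

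Suppose, for contradiction, that $u$ is a classical solution of \eqref{equ-Lagrangian} satisfying \eqref{equ-ContraAssum}, and set $w(x):=u(x)-\tfrac{a}{2}|x|^2$, so $w=o(1)$ at infinity. Since $F(D^2u(x))=g(x)>\tfrac{(n-2)\pi}{2}$, both $D^2u(x)$ and $aI$ lie in the convex supercritical region of $\mathtt{Sym}(n)$, on which $F$ is concave \cite{Yuan-GlobalSolution-SPL,Collins-Picard-Wu-ConcavityLagrangian}. Using $F'(aI)=(1+a^2)^{-1}I$, the tangent-plane bound at $aI$ yields
\[
g(x)=F(D^2u(x))\le \gamma+\frac{1}{1+a^2}\Delta w(x),
\]
hence $\Delta w(x)\ge(1+a^2)\varepsilon(1+|x|^2)^{-\beta/2}$. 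Integrating over $B_R$ and applying the divergence theorem,
\[
\int_{\partial B_R}\partial_\nu w\,d\sigma=\int_{B_R}\Delta w\,dx\ge c_1\int_0^R(1+r^2)^{-\beta/2}r^{n-1}\,dr\ge c_2 R^{n-\beta}
\]
for all sufficiently large $R$, using $\beta\le 2<n$. The spherical average $\bar w(R):=|\partial B_R|^{-1}\int_{\partial B_R}w\,d\sigma$ therefore satisfies $\bar w'(R)\ge c_3 R^{1-\beta}$ for $R$ large, and integrating from some $R_0$ to $R$ forces $\bar w(R)\to+\infty$: of order $R^{2-\beta}$ when $\beta<2$, and of order $\ln R$ when $\beta=2$. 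This contradicts $w=o(1)$.

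The only non-routine input is the concavity of $F$ on the supercritical set, which is already cited in the introduction; afterwards the argument is a pure potential-theoretic estimate for the Laplacian. The main design choice is keeping $g-\gamma$ one-signed, since an oscillating perturbation of the same pointwise size could cancel in the spherical integral and would fail to obstruct existence.
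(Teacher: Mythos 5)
Your approach is genuinely different from the paper's and is shorter and more elegant. The paper constructs a radially symmetric $g$, classifies all radial solutions via an ODE analysis (Lemmas \ref{lem-Def-h}--\ref{lem-radialSol}) showing they behave like $\frac{a}{2}|x|^2 + C|x|^{2-\beta}$ (or $+C\ln|x|$), proves via a maximum-principle/rotation argument that any solution with $o(1)$ asymptotics must be radial, and derives the contradiction from the ODE asymptotics. You bypass all of this with a linearization and a potential-theoretic estimate for the Laplacian. Your observation at the end about keeping $g-\gamma$ one-signed is exactly the mechanism both proofs exploit.

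However, there is one step whose justification as written is incorrect, though the conclusion is true and easily repaired. You claim the tangent-plane inequality $F(M)\le F(aI)+DF(aI):(M-aI)$ follows from concavity of $F$ on the supercritical region $\{F>\frac{(n-2)\pi}{2}\}$. In fact $F(M)=\sum_i\arctan\lambda_i(M)$ is \emph{not} concave there: writing $D^2F$ in terms of eigenvalues gives diagonal entries $-2\lambda_i/(1+\lambda_i^2)^2$, which is positive whenever some $\lambda_i<0$, and such matrices do lie in $\{F>\frac{(n-2)\pi}{2}\}$ as long as $F<\frac{(n-1)\pi}{2}$ (e.g.\ $\mathrm{diag}(10,10,-1)$ when $n=3$). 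What Yuan and Collins--Picard--Wu actually prove is convexity of the super-level sets (quasi-concavity), which by itself does not yield a tangent-plane bound. Two ways to patch this. (i) Since the theorem only requires exhibiting one pair $(g,A)$, simply restrict to $\gamma\in\bigl(\frac{(n-1)\pi}{2},\frac{n\pi}{2}\bigr)$; then every $M$ with $F(M)\geq\gamma$ has all eigenvalues positive, $D^2F\le 0$ there, $F$ is genuinely concave on the (convex) set $\{F\ge\gamma\}$, and your inequality is immediate. (ii) Alternatively, keep general supercritical $\gamma$ and note that the needed inequality reduces to showing $\mathrm{tr}(M)-na\ge(1+a^2)\bigl(F(M)-\gamma\bigr)$ for $F(M)\ge\gamma$; by Lagrange multipliers on the (convex) level set $\{F=\gamma'\}$, the minimum of $\mathrm{tr}$ over eigenvalues is attained at the scalar matrix $\tan(\gamma'/n)I$, and the desired inequality then follows from the convexity of $\tan$ on $(0,\frac{\pi}{2})$ via $\tan(\gamma'/n)-\tan(\gamma/n)\ge\sec^2(\gamma/n)(\gamma'-\gamma)/n$. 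With either fix, the rest of your argument (integrating $\Delta w\ge c(1+|x|^2)^{-\beta/2}$ over $B_R$ and forcing the spherical average of $w$ to diverge) is correct and complete.
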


Eventually, we would like to mention that the condition \eqref{equ-system-Dirich} origins from the asymptotic behavior results of solutions on entire $\mathbb R^n$ or exterior domain. There are generous results on this topic, see for instance
Caffarelli--Li \cite{Caffarelli-Li-ExtensionJCP} and Bao--Li--Zhang \cite{Bao-Li-Zhang-ExteriorBerns-MA} for the Monge--Amp\`ere equations,
Li--Li--Yuan \cite{Li-Li-Yuan-BernsteinThm} for the special Lagrangian equations, Liu--Bao \cite{Liu-Bao-2021-Expansion-LagMeanC,Liu-Bao-2021-Dim2,Liu-Bao-2020-ExpansionSPL,Liu-Bao-2021-Dim2-MeanCur}
for a family of mean curvature equations of gradient graphs and Jia \cite{Jia-Xiaobiao-AsymGeneralFully} for a family of general fully nonlinear elliptic equations under asymptotic assumptions of the Hessian matrix.

The paper is organized as follows. In sections \ref{sec-ConstrucSubSol} and \ref{sec-ConstrucSupSol}, we construct generalized symmetric subsolutions and supersolutions.
In sections \ref{sec-ProofThm1} and \ref{sec-ProofThm2}, we prove Theorems \ref{thm-n>=3} and \ref{thm-Optimality} respectively.

\section{Construction of subsolution}\label{sec-ConstrucSubSol}

In this section, we show the existence of subsolution to \eqref{equ-Lagrangian} in $\mathbb R^n$ with prescribed asymptotic behavior at infinity. Hereinafter, we let $a:=(a_1,\cdots,a_n):=\lambda(A)$   and
\[
F(D^2u):=f(\lambda(D^2u)):=\sum_{i=1}^n\arctan \lambda_i(D^2u).
\]
We may assume without loss of generality that $g(x)>\frac{(n-2)\pi}{2}$ and $A$ is positive-definite satisfying \eqref{equ-temp-4}, otherwise we consider  $-u$ instead. Let
\[
s(x):=\frac{1}{2}x^TAx=\frac{1}{2}\sum_{i=1}^na_ix_i^2,\quad\forall~x\in\mathbb R^n.
\]
Then $u(x)$ is generalized symmetric with respect to $A$ if there exists a scalar function $U$ such that
\begin{equation}\label{equ-def-generalizedSymm}
u(x)=U(s(x)),\quad\forall~x\in\mathbb R^n.
\end{equation}
We refer to \cite{User'sGuide-ViscositySol} for the definition of  viscosity subsolutions and supersolutions to \eqref{equ-Lagrangian}. The main result in this section can be stated as the following.
\begin{lemma}\label{lem-subsolution-Existence}
  For any $\beta>2$ and positive-definite $A\in\mathtt{Sym}(n)$ satisfying condition \eqref{equ-temp-4},
   there exists a function  $\underline u$  generalized symmetric to $A$ and is a viscosity subsolution to \eqref{equ-Lagrangian} in $\mathbb R^n$ with asymptotic behavior
  \begin{equation}\label{equ-temp-2}
  \underline u(x)=\dfrac{1}{2}x^TAx+\left\{
  \begin{array}{lllll}
    O(|x|^{2-\min\{2M(A),\beta\}}), & \text{if }M(A)\neq\frac{\beta}{2},\\
    O(|x|^{2-\beta}\ln |x|), & \text{if }M(A)=\frac{\beta}{2}.
  \end{array}
  \right.
  \end{equation}
\end{lemma}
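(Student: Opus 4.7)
My plan is to take the generalized-symmetric ansatz $\underline u(x)=U(s(x))$ with $s(x)=\tfrac12 x^{T}Ax$ and $U$ of the explicit form
\[
U(s)\;=\;s\;-\;\mu\, s^{1-\gamma},\qquad \gamma:=\min\{M(A),\,\beta/2\}>1,
\]
with $\mu>0$ chosen sufficiently large, and an additional logarithmic factor inserted in the borderline case $M(A)=\beta/2$.  Since $M(A)>1$ by \eqref{equ-temp-4} and $\beta>2$ by \eqref{equ-cond-g}, the exponent $\gamma$ is strictly greater than $1$, so $U(s)-s=-\mu s^{1-\gamma}$ decays at infinity; converting via $s\sim|x|^{2}$ one reads off the asserted asymptotic $\underline u-\tfrac12 x^{T}Ax = O(|x|^{2-2\gamma})=O(|x|^{2-\min\{2M(A),\beta\}})$.

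The subsolution inequality $F(D^{2}\underline u)\geq g$ is then checked by linearisation around $A$. Diagonalising $A=\mathrm{diag}(a_{1},\dots,a_{n})$, the Hessian
\[
D^{2}\underline u\;=\;U'(s)A+U''(s)(Ax)(Ax)^{T}
\]
is a rank-one perturbation of $U'(s)A$, and since $U'\to 1$, $U''\to 0$ at infinity, one has the leading-order expansion
\[
F(D^{2}\underline u)-g(\infty)\;=\;(U'-1)\sum_{j}\tfrac{a_{j}}{1+a_{j}^{2}}\;+\;U''\sum_{j}\tfrac{a_{j}^{2}x_{j}^{2}}{1+a_{j}^{2}}\;+\;O\bigl(|D^{2}\underline u-A|^{2}\bigr).
\]
Substituting the ansatz and introducing the simplex weights $\sigma_{j}:=a_{j}x_{j}^{2}/\sum_{k}a_{k}x_{k}^{2}$, the leading $s^{-\gamma}$ term reads
\[
\mu(\gamma-1)s^{-\gamma}\Bigl[\sum_{j}\tfrac{a_{j}}{1+a_{j}^{2}}\;-\;2\gamma\sum_{j}\sigma_{j}\tfrac{a_{j}}{1+a_{j}^{2}}\Bigr],
\]
and the bracket is uniformly nonnegative in $x$ whenever $\gamma\leq \min_{j}\tfrac{1+a_{j}^{2}}{2a_{j}}\cdot\sum_{i}\tfrac{a_{i}}{1+a_{i}^{2}}$; this last condition is implied by $\gamma\leq M(A)$ since restricting the index set in \eqref{equ-temp-4} to the diagonal $j=k$ can only increase the value of the minimum.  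Combined with $\gamma\leq\beta/2$, the resulting $s^{-\gamma}$ then dominates $g(x)-g(\infty)=O(s^{-\beta/2})$, and fixing $\mu$ large enough yields $F(D^{2}\underline u)\geq g$ on an exterior region $\{|x|\geq R\}$.

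Finally, to upgrade to a subsolution on all of $\mathbb R^{n}$, I would replace $\underline u$ by $\max\{\underline u,-K\}$ for a sufficiently large constant $K$, using that the supremum of two viscosity subsolutions is a viscosity subsolution and that a sufficiently negative constant is trivially a subsolution wherever $g\leq 0$; alternatively, one can modify $U$ smoothly on $[0,R^{2}/2]$ with $U'>1$ and $U''\leq 0$ kept intact, since $g$ is bounded.  The main obstacle is the sharp identification of the $M(A)$-constant in the middle paragraph: a naive rank-one interlacing bound $F(D^{2}\underline u)\geq F(U'A)$ discards the $U''$ contribution entirely and fails to capture \eqref{equ-temp-4}, so one has to retain both terms in the first-order expansion and perform the simplex optimisation uniformly in $x$.
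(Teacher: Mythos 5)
Your route --- an explicit power-law ansatz $U(s)=s-\mu s^{1-\gamma}$ together with a first-order Taylor expansion of $F$ around $A$ --- is genuinely different from the paper's, which instead solves the ODE $U''=\frac{h_0(s,U')-a_1U'}{2a_n(s+1)}$ coming from the one-sided eigenvalue estimate of Lemma \ref{lem-subsolu-estimate} and then reads the decay rate off asymptotic stability at the fixed point $W=1$. Your reduction to a simplex inequality and the identification of where $M(A)$ enters are correct in spirit, but there are two real gaps.

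The main one: the bracket
\[
\sum_{j}\frac{a_{j}}{1+a_{j}^{2}}\;-\;2\gamma\sum_{j}\sigma_{j}\frac{a_{j}}{1+a_{j}^{2}}
\]
is only shown to be \emph{nonnegative} under $\gamma\leq M(A)$, not bounded below by a positive constant, and it does vanish in admissible cases. Take $a_{1}=\cdots=a_{n}=a$: then $M(A)=n/2$ is attained with $j=k$ in \eqref{equ-temp-4}, the bracket equals the constant $\frac{a}{1+a^{2}}(n-2\gamma)$, and it is identically zero when $\gamma=M(A)$. Since $\gamma=\min\{M(A),\beta/2\}$ equals $M(A)$ whenever $\beta>n$, this is not an excluded corner case. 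When the linear term vanishes, the sign of $F(D^{2}\underline u)-g(\infty)$ is decided by the quadratic remainder, which by concavity of $F$ in the supercritical range is \emph{negative}; a direct expansion in this radial case gives
\[
F(D^{2}\underline u)\;=\;g(\infty)-\frac{a^{3}n(n-1)}{(1+a^{2})^{2}}\,(U'-1)^{2}+O\bigl((U'-1)^{3}\bigr)\;<\;g(\infty)\;\leq\;\overline g(s),
\]
so the subsolution inequality fails (take, e.g., $g$ approaching $g(\infty)$ from above, as in \eqref{equ-example-g}). Your sentence ``the resulting $s^{-\gamma}$ then dominates $g(x)-g(\infty)$'' presupposes a uniform positive lower bound on the bracket which you have not, and cannot, establish at $\gamma=M(A)$ when the minimum in \eqref{equ-temp-4} is attained on the diagonal. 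Shrinking $\gamma$ strictly below $M(A)$ restores positivity and still yields $\underline u-\tfrac{1}{2}x^{T}Ax=O(1)$, which is all Lemma \ref{Lem-UniformBds-C0} actually uses, but it does not prove the rate asserted in \eqref{equ-temp-2}. The paper avoids this entirely: with $U''\leq0$ the pointwise estimate of Lemma \ref{lem-subsolu-estimate} holds globally and exactly (no linearization, no error term), and the correct $W=U'$ is produced by solving the ODE rather than imposing a power law.

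The interior patch is also incorrect as written. Here $g>\frac{(n-2)\pi}{2}>0=F(0)$, so a constant is never a viscosity subsolution and $\max\{\underline u,-K\}$ gains nothing; moreover $s^{1-\gamma}$ is itself singular at $s=0$. Your alternative --- modifying $U$ smoothly near $s=0$ with $U'$ large and $U''\leq0$ --- is the right idea, but one must actually keep $F(\lambda(D^{2}\underline u))\geq\overline g(s)$ down to $s=0$. The paper gets this by replacing $s$ with $s+1$ in the ODE and taking $w_{0}>\underline w(0)$, which forces $W(s)>\underline w(s)$ for all $s\geq0$, and then invokes Theorem 1.1 of \cite{Caffarelli-Li-Nirenberg--ViscositySol-III} to extend the viscosity subsolution across the origin.
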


We may assume without loss of generality that
\[
0<a_1\leq a_2\leq\cdots\leq a_n,
\]
otherwise we only need to apply an additional orthogonal transformation to $u$.
By a direct computation, the gradient and Hessian matrix of generalized symmetric function $u$ are composed of
\[
D_iu(x)=a_ix_iU'(s)\quad\text{and}\quad D_{ij}u(x)=a_i\delta_{ij}U'(s)+a_ia_jx_ix_jU''(s),
\]
where $\delta_{ij}$ denotes the Kronecker delta symbol. Especially,
\begin{equation}\label{equ-trace}
\Delta u=\sum_{i=1}^n\lambda_i(D^2u)=\left(\sum_{i=1}^na_i\right)U'+\sum_{j=1}^na_j^2x_j^2U''.
\end{equation}
By the Weyl's theorem, we have the following estimates on the eigenvalues of Hessian matrix of generalized symmetric functions, see for instance  \cite{Book-Horn-Johnson-MatrixAnalysis} and Lemmas 2.1 and 2.3 in \cite{Li-Wang-EntireDirichlet}.
\begin{lemma}\label{lem-subsolu-estimate}
  Let $u$ be a $C^2$ function generalized symmetric with respect to $A$ and $U$ be the scalar function such that \eqref{equ-def-generalizedSymm} holds.  Assume $U'>0$ and $U''\leq 0$. Then
\begin{equation}\label{equ-EstimateEigenvalue}
a_{i} U^{\prime}(s)+\sum_{j=1}^{n} a_{j}^{2} x_{j}^{2} U^{\prime \prime}(s) \leq \lambda_{i}\left(D^{2} u(x)\right) \leq a_{i} U^{\prime}(s), \quad \forall~ 1 \leq i \leq n,
\end{equation}
and
\begin{equation}\label{equ-LowerEstimate-F}
F(D^2u)\geq f(\overline a),\quad\text{where}\quad\overline a:=\left(a_1U'+\sum_{j=1}^na_j^2x_j^2U'',a_2U',\cdots,a_nU'\right),
\end{equation}
as long as all components of $\overline a$ are positive.
\end{lemma}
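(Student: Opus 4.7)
The plan is to exploit the rank-one structure of the Hessian of a generalized symmetric function and then invoke a majorization argument. By rewriting the formula $D_{ij}u = a_i\delta_{ij}U' + a_ia_jx_ix_jU''$ as the matrix identity
\[
D^2u = U'(s)\,\mathrm{diag}(a_1,\ldots,a_n) + U''(s)\,vv^T, \qquad v_i := a_ix_i,
\]
with $|v|^2 = \sum_j a_j^2 x_j^2$, one sees that $D^2u$ is a rank-one perturbation of a diagonal matrix whose eigenvalues are exactly $a_1U' \leq \cdots \leq a_nU'$ (since $U'>0$), shifted by a negative semi-definite rank-one matrix with spectrum $\{U''|v|^2, 0, \ldots, 0\}$ (since $U''\leq 0$). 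Weyl's inequality applied to this sum immediately yields both bounds in \eqref{equ-EstimateEigenvalue}: the upper bound $\lambda_i(D^2u) \leq a_iU'$ by adding the largest eigenvalue $0$ of the perturbation, and the lower bound $\lambda_i(D^2u) \geq a_iU' + U''|v|^2$ by adding the smallest eigenvalue $U''|v|^2$.

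For the second assertion, I would combine \eqref{equ-EstimateEigenvalue} with the trace identity \eqref{equ-trace} to derive a majorization. Note that $\overline{a}$ is already sorted increasingly, since $\overline{a}_1 \leq a_1U' \leq \overline{a}_2 \leq \cdots \leq \overline{a}_n$, and that under the positivity hypothesis on $\overline{a}$ every entry of $\lambda(D^2u)$ is also positive because $\lambda_i(D^2u) \geq \overline{a}_1 > 0$. The Weyl upper bound gives $\lambda_i(D^2u) \leq \overline{a}_i$ for every $i \geq 2$, while \eqref{equ-trace} says $\sum_i \lambda_i(D^2u) = \sum_i \overline{a}_i$; summing the upper bound over $i \geq k$ then yields
\[
\sum_{i=k}^n \lambda_i(D^2u) \leq \sum_{i=k}^n \overline{a}_i \quad \text{for all } k \geq 1,
\]
with equality at $k=1$. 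This is exactly the statement that $\overline{a}$ majorizes $\lambda(D^2u)$ in the usual sense.

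To conclude, I would appeal to the Schur--Ostrowski criterion: since $\arctan$ is concave on $[0,\infty)$, the symmetric separable function $\phi(y)=\sum_i\arctan y_i$ is Schur-concave on the positive orthant, so $\overline{a} \succ \lambda(D^2u)$ gives $F(D^2u) = \phi(\lambda(D^2u)) \geq \phi(\overline{a}) = f(\overline{a})$. The only point that genuinely requires attention is the positivity assumption on $\overline{a}$, which is precisely what keeps all relevant eigenvalues in the concavity domain of $\arctan$ so that Schur-concavity can be invoked; beyond that the argument is purely algebraic and parallels the scheme of Lemmas~2.1 and~2.3 of \cite{Li-Wang-EntireDirichlet}.
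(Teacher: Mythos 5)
Your argument is correct; the first half matches the paper, but for \eqref{equ-LowerEstimate-F} you take a more conceptual route than the paper does. For the eigenvalue bounds \eqref{equ-EstimateEigenvalue} both you and the paper apply Weyl's inequality to the rank-one decomposition $D^2 u = U'\,\mathrm{diag}(a_1,\ldots,a_n) + U''\,vv^T$ (the paper defers to Lemma 2.1 of \cite{Li-Wang-EntireDirichlet} for this part). For \eqref{equ-LowerEstimate-F}, however, the paper never mentions majorization: using \eqref{equ-trace} and \eqref{equ-EstimateEigenvalue} it writes $\lambda_i(D^2 u) = a_i U' + \theta_i V$ with $\theta_i\in[0,1]$, $\sum_i\theta_i=1$, and then repeatedly applies the two-variable $\arctan$ inequality of its Lemma \ref{lem-temp-1} to transfer each perturbation $\theta_i V$ into the first coordinate one pair at a time. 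You instead recognize that the component-wise bound $\lambda_i\le a_iU'$ together with equality of traces says precisely that $\overline{a}$ majorizes $\lambda(D^2u)$, and then invoke Schur-concavity of $y\mapsto\sum_i\arctan y_i$ on the positive orthant; the positivity hypothesis on $\overline{a}$ keeps the whole majorization chain inside the region where $\arctan$ is concave, so the Schur--Ostrowski criterion applies cleanly. In effect, the paper's Lemma \ref{lem-temp-1} is exactly the Robin Hood transfer step in a self-contained proof of the Schur-concavity you are citing. Your version is shorter, highlights the structural reason the estimate holds, and would generalize immediately to any separable concave $f$; the paper's version avoids appealing to the general theory of majorization and remains elementary. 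Both are valid.
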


Before proving Lemma \ref{lem-subsolu-estimate}, we introduce the following calculus fact.

\begin{lemma}\label{lem-temp-1}
  Let $b_1,b_2\geq 0$  and $\delta\leq 0$ satisfy
  \[
  0\leq b_1\leq b_2\quad\text{and}\quad  b_1+\delta\geq 0.
  \]
  Then
  \[
  \arctan(b_1+\delta)+\arctan b_2\leq \arctan b_1+\arctan (b_2+\delta).
  \]
\end{lemma}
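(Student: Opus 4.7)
The plan is to rewrite the claimed inequality as a monotonicity statement and then prove it by an elementary derivative computation. Rearranging the desired inequality, it becomes
\[
\arctan b_2-\arctan(b_2+\delta)\leq \arctan b_1-\arctan(b_1+\delta),
\]
so it suffices to show that the function
\[
h(t):=\arctan t-\arctan(t+\delta)
\]
is non-increasing on the interval $[b_1,b_2]$.

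Next, I would compute
\[
h'(t)=\frac{1}{1+t^2}-\frac{1}{1+(t+\delta)^2}
\]
and argue as follows. For any $t\in [b_1,b_2]$ we have $t+\delta\geq b_1+\delta\geq 0$ by the hypothesis, and since $\delta\leq 0$ this yields $0\leq t+\delta\leq t$. Squaring gives $(t+\delta)^2\leq t^2$, hence $1+(t+\delta)^2\leq 1+t^2$, and therefore $h'(t)\leq 0$ on $[b_1,b_2]$. Integrating (or invoking the mean value theorem) shows $h(b_2)\leq h(b_1)$, which is exactly the rearranged inequality, and the lemma follows.

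There is no real obstacle here; the statement is a short convexity-type fact about $\arctan$ and the only subtlety is to use the assumption $b_1+\delta\geq 0$ precisely to guarantee that $t+\delta$ stays non-negative on $[b_1,b_2]$, so that $(t+\delta)^2\leq t^2$ follows directly from $t+\delta\leq t$ without sign issues. No additional machinery beyond the sign of $h'$ is needed.
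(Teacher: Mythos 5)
Your proof is correct and takes essentially the same approach as the paper: you define the difference $h(t)=\arctan t-\arctan(t+\delta)$ (the paper uses its negative $\xi(t)=\arctan(t+\delta)-\arctan t$), differentiate, and conclude by monotonicity. The only cosmetic difference is in verifying the sign of the derivative: the paper factors $\xi'(t)=-\delta(2t+\delta)/\bigl((1+(t+\delta)^2)(1+t^2)\bigr)\geq 0$, while you observe directly that $0\leq t+\delta\leq t$ forces $(t+\delta)^2\leq t^2$.
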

\begin{proof}

  Let
  \[
  \xi(t):=\arctan(t+\delta)-\arctan t,\quad t\geq -\frac{\delta}{2}.
  \]
  By a direct computation,
  \[
  \xi'(t)=\dfrac{1}{1+(t+\delta)^2}-\dfrac{1}{1+t^2}=
  \dfrac{-\delta (2t+\delta)}{(1+(t+\delta)^2)(1+t^2)}\geq 0.
  \]
  Consequently, $\xi$ is monotone nondecreasing and we have
  \[
  \xi(b_1)\leq \xi(b_2),\quad\text{i.e.,}\quad
  \arctan (b_1+\delta)-\arctan b_1\leq \arctan (b_2+\delta)-\arctan b_2.
  \]
  This   finishes the proof immediately.
\end{proof}

\begin{proof}[Proof of Lemma \ref{lem-subsolu-estimate}]
We provide a short proof here since the assumptions are slightly different from the one in \cite{Li-Wang-EntireDirichlet}. Estimate \eqref{equ-EstimateEigenvalue} can be bound as Lemma 2.1 in
\cite{Li-Wang-EntireDirichlet}.

By \eqref{equ-trace} and \eqref{equ-EstimateEigenvalue},
 there exist $0\leq \theta_i(s)\leq 1, i=1,\cdots,n$ such that
\begin{equation}\label{equ-temp-15}
\lambda_i(D^2u)=a_iU'+\theta_i\sum_{j=1}^na_j^2x_j^2U''\quad\text{and}\quad\sum_{i=1}^n\theta_i=1.
\end{equation}
Then the desired result follows by repeatedly using  Lemma \ref{lem-temp-1}. In fact, by applying Lemma \ref{lem-temp-1} with respect to the first two variables of $f$, we have
\[
\begin{array}{llll}
  F(D^2u)&=&\displaystyle
  f\left(a_1U'+\theta_1V,
  a_2U'+\theta_2V
  ,a_3U'+\theta_3V,\cdots,a_nU'+\theta_n V\right)
  \\
  &\geq&\displaystyle  f\left(a_1U'+(\theta_1+\theta_2)V,a_2U',a_3U'+\theta_3V,\cdots,a_nU'+\theta_nV\right),
\end{array}
\]
where $V:=\sum_{j=1}^na_j^2x_j^2U''\leq 0$ and we used the fact that
\[
0<a_1U'+\theta_1V\leq a_1U'\leq a_2U'.
\]
Repeat the argument finite times and  we finish the proof.
\end{proof}

By condition \eqref{equ-cond-g}, there exist monotone smooth functions $\underline g$ and $\overline g$ on $[0,+\infty)$ such that
\begin{equation}\label{equ-temp-3}
\frac{(n-2)\pi}{2}< \underline g(s(x))\leq g(x)\leq \overline g(s(x)),\quad\forall~x\in \mathbb R^n
\end{equation}
and
\[
\underline g(s),~\overline g(s)=g(\infty)+O_m(s^{-\frac{\beta}{2}})\quad\text{as }s\rightarrow\infty.
\]
By the monotonicity of $\arctan$ function, there exists a unique decreasing positive function $\underline w(s)$ defined on $[0,+\infty)$ determined by
\[
f\left(a_{1} \underline w(s), \cdots, a_{n} \underline w(s)\right)=\overline{g}(s).
\]
\begin{lemma}\label{lem-ImplicitFuncResult}
There exists a unique smooth function  $h(s,w)$ satisfying
\[
f(h(s,w),a_2w,\cdots,a_nw)=\overline g(s)\quad\text{in }\{(s,w)~|~s\geq 0,~w\geq\underline w(s)\}.
\]
Furthermore, $h(s,\underline w(s))=a_1\underline w(s)$, $h(s,w)$
is monotone decreasing with respect to $s$ and $w$, and there exists $C>0$ such that
\begin{equation}\label{equ-temp-19}
|h(s,w)-h(\infty,w)|\leq Cs^{-\frac{\beta}{2}}\quad\text{and}\quad \dfrac{1}{2a_n}\left(\dfrac{\partial h}{\partial w}(\infty,1)-a_1\right)=-M(A),
\end{equation}
  where the constant $M(A)$ defined in \eqref{equ-temp-4} may be written as
  \[
  M(A)=\dfrac{1+a_1^2}{2a_n}\sum_{i=1}^n\dfrac{a_i}{1+a_i^2}.
  \]
\end{lemma}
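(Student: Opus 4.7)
The plan is to invoke the implicit function theorem on the map
\[
G(s,w,h) := f(h, a_2 w, \ldots, a_n w) - \overline{g}(s),
\]
whose partial derivative $\partial_h G = 1/(1+h^2)$ is strictly positive. The first step is to verify that for each $(s,w)$ with $s \geq 0$ and $w \geq \underline{w}(s)$, the equation $G(s,w,h)=0$ admits a unique root $h \in \mathbb{R}$. Since $\arctan h$ ranges over $(-\pi/2,\pi/2)$, this reduces to checking
\[
\overline{g}(s) - \sum_{i=2}^n \arctan(a_i w) \in \left(-\tfrac{\pi}{2},\tfrac{\pi}{2}\right),
\]
which holds because $\overline{g}(s) > (n-2)\pi/2$ by \eqref{equ-temp-3}, $\sum_{i=2}^n \arctan(a_i w) < (n-1)\pi/2$, and at $w = \underline{w}(s)$ one already has the explicit solution $h = a_1 \underline{w}(s)$. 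Smoothness of $h(s,w)$ follows from the standard implicit function theorem, and uniqueness immediately yields the boundary identity $h(s, \underline{w}(s)) = a_1 \underline{w}(s)$.

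The next step is to read off monotonicity by implicit differentiation:
\[
\frac{\partial h}{\partial s} = (1+h^2)\,\overline{g}'(s), \qquad \frac{\partial h}{\partial w} = -(1+h^2)\sum_{i=2}^{n}\frac{a_i}{1+(a_i w)^2},
\]
both of which are nonpositive once $\overline{g}$ is chosen monotone nonincreasing in \eqref{equ-temp-3}, an adjustment permitted by $\overline{g}(s) = g(\infty)+O_m(s^{-\beta/2})$.

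For the asymptotic estimate, introduce $h(\infty,w)$ as the unique root of $f(h, a_2 w, \ldots, a_n w) = g(\infty)$. Subtracting this from the defining equation for $h(s,w)$ and applying the mean value theorem to $\arctan$ in the first slot gives
\[
\frac{h(s,w) - h(\infty,w)}{1+h_*^2} = \overline{g}(s) - g(\infty) = O(s^{-\beta/2})
\]
for some intermediate $h_*$, yielding $|h(s,w)-h(\infty,w)| \leq C s^{-\beta/2}$ uniformly on compact $w$-intervals since $1/(1+h_*^2)$ is bounded. Setting $w=1$ and comparing with $f(a_1,\ldots,a_n)=g(\infty)$ forces $h(\infty,1)=a_1$; differentiating $f(h(\infty,w), a_2 w, \ldots, a_n w)=g(\infty)$ in $w$ and evaluating at $w=1$ produces
\[
\partial_w h(\infty,1) = -(1+a_1^2)\sum_{i=2}^n\frac{a_i}{1+a_i^2},
\]
and absorbing the remaining $-a_1$ via $a_1 = (1+a_1^2)\cdot \frac{a_1}{1+a_1^2}$ recovers
\[
\frac{1}{2a_n}\bigl(\partial_w h(\infty,1) - a_1\bigr) = -\frac{1+a_1^2}{2a_n}\sum_{i=1}^n\frac{a_i}{1+a_i^2} = -M(A).
\]

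The only genuine difficulty is ensuring that the implicit function is \emph{globally} defined on the unbounded region $\{w \geq \underline{w}(s)\}$ rather than in a mere neighborhood of the boundary curve $w=\underline{w}(s)$; this is the content of the range bound in the first paragraph, where the supercritical phase condition \eqref{equ-cond-g} keeps $\overline{g}(s)$ uniformly above $(n-2)\pi/2$ and prevents the arctan sum from exiting the invertible window as $w\to\infty$.
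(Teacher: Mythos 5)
Your proof is correct and follows essentially the same route as the paper: invertibility of $h \mapsto f(h,a_2w,\ldots,a_nw)$ via the arctan range, the implicit function theorem for smoothness and the two partial-derivative formulas, the mean value theorem on $\arctan$ for the $O(s^{-\beta/2})$ bound, and direct differentiation at $(\infty,1)$ to recover $-M(A)$. The only cosmetic difference is that the paper obtains a $w$-uniform constant in \eqref{equ-temp-19} by first bounding $h$ globally ($\tan(g(\infty)-\tfrac{(n-1)\pi}{2}) \leq h(s,w) \leq a_1\underline{w}(0)$), whereas you settle for uniformity on compact $w$-intervals; since the estimate is later used only near $w=1$, this is immaterial.
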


\begin{proof}
  Notice that for all $w>\underline w(s)$, we have
  \[
  \lim_{h\rightarrow a_1w}f(h,a_2w,\cdots,a_nw)=f(a_1w,a_2w,\cdots,a_nw)>\overline g(s)
  \]
  and
  \[
  \lim_{h\rightarrow -\infty}f(h,a_2w,\cdots,a_nw)<\frac{(n-2)\pi}{2}<\overline g(s).
  \]
  Hence by the mean value theorem, there exists a unique function $h(s,w)$ such that
  \[
  f(h(s,w),a_2w,\cdots,a_nw)=\overline g(s).
  \]
  Especially, for all $s\geq 0$ and $w\geq \underline w(s)$,
  \[
  \tan\left( g(\infty)-\frac{(n-1)\pi}{2}\right)\leq h(s,w)\leq a_1\underline w(s),
  \]
  which implies  $h(s,w)$  is a bounded function.

  By the implicit function theorem, $h$ is smooth,
  \[
  \frac{\partial f}{\partial \lambda_1}(h(s,w),a_2w,\cdots,a_nw)\cdot \frac{\partial h}{\partial s}(s,w)=\frac{\partial\overline g}{\partial s}(s)<0
  \]
  and
  \[
  \frac{\partial f}{\partial \lambda_1}(h,a_2w,\cdots,a_nw) \cdot \frac{\partial h}{\partial w}
  +\sum_{i=2}^na_i\frac{\partial f}{\partial\lambda_i}(h,a_2w,\cdots,a_nw)=0.
  \]
  Consequently $h(s,w)$ is monotone decreasing with respect to both $s$ and $w$.
  Sending $(s,w)$ to $(\infty,1)$, it follows immediately that
  \[
  \dfrac{1}{1+a_1^2}\cdot\dfrac{\partial h}{\partial w}(\infty,1)+\sum_{i=2}^n\dfrac{a_i}{1+a_i^2}=0,\quad\text{i.e.,}\quad
  \dfrac{\partial h}{\partial w}(\infty,1)=-(1+a_1^2)\sum_{i=2}^n\dfrac{a_i}{1+a_i^2}.
  \]
  This proves the second equality in \eqref{equ-temp-19} immediately and it remains to prove the first equality.

  Since $h(s,w)$ is a bounded smooth function, there exists $C>0$ such that
  \[
  \begin{array}{llll}
    \overline g(s)-g(\infty)&=& f(h(s,w),a_2w,\cdots,a_nw)-f(h(\infty,w),a_2w,\cdots,a_nw)\\
    &=&\arctan h(s,w)-\arctan h(\infty,w)\\
    &\geq & C(h(s,w)-h(\infty,w)).
  \end{array}
  \]
  Hence the first equality in \eqref{equ-temp-19} follows from the asymptotic behavior of $\overline g$.
\end{proof}

\begin{corollary}\label{Coro-temp1}
  Let $u, U$ and $A$ be as in Lemma \ref{lem-subsolu-estimate}.  If
  \[
U'>\underline w,\quad U''\leq 0\quad\text{and}\quad
a_1U'+2a_nsU''\geq h_0(s,U')\quad\text{in }s>0,
\]
where $h_0(s,w):=\max\{0,h(s,w)\}$,
then $u$ is a subsolution to \eqref{equ-Lagrangian} in $\mathbb R^n\setminus\{0\}$.
\end{corollary}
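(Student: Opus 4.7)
The plan is to verify the pointwise classical inequality $F(D^2u)(x)\geq g(x)$ for every $x\in\mathbb{R}^n\setminus\{0\}$, which, since $u\in C^2$ away from the origin, is equivalent to the viscosity subsolution property. The whole argument is a chain of monotonicity estimates assembling Lemma \ref{lem-subsolu-estimate}, Lemma \ref{lem-ImplicitFuncResult}, and the standing hypothesis.

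First, I would apply Lemma \ref{lem-subsolu-estimate} to the generalized symmetric function $u$. Since $U'>0$ and $U''\leq 0$, provided the components of
\[
\overline a=\bigl(a_1U'+\textstyle\sum_{j}a_j^2x_j^2U'',\,a_2U',\,\ldots,\,a_nU'\bigr)
\]
are nonnegative, the lemma yields $F(D^2u)\geq f(\overline a)$. The last $n-1$ entries are automatically positive from $U'>0$. For the first entry, the key algebraic observation is
\[
\sum_{j=1}^n a_j^2x_j^2 \;\leq\; a_n\sum_{j=1}^n a_jx_j^2 \;=\; 2a_n\,s,
\]
which, combined with $U''\leq 0$ (so the inequality flips when multiplied by $U''$) and the standing hypothesis $a_1U'+2a_nsU''\geq h_0(s,U')$, gives
\[
a_1U'+\sum_{j=1}^n a_j^2x_j^2U'' \;\geq\; a_1U'+2a_nsU'' \;\geq\; h_0(s,U') \;\geq\; 0.
\]

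Second, I would invoke the monotonicity of $f$ in each argument together with the defining identity $f\bigl(h(s,U'),a_2U',\ldots,a_nU'\bigr)=\overline g(s)$ from Lemma \ref{lem-ImplicitFuncResult}. Since $h_0(s,U')\geq h(s,U')$ and $f$ is strictly increasing in its first slot,
\[
F(D^2u) \;\geq\; f(\overline a) \;\geq\; f\bigl(h(s,U'),a_2U',\ldots,a_nU'\bigr) \;=\; \overline g(s(x)) \;\geq\; g(x),
\]
where the last inequality is \eqref{equ-temp-3}. This proves the classical subsolution inequality on $\mathbb{R}^n\setminus\{0\}$.

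The main bookkeeping subtlety is the borderline case $h(s,U')\leq 0$, in which $h_0=0$ and the first component of $\overline a$ is only guaranteed to be nonnegative rather than strictly positive, so Lemma \ref{lem-subsolu-estimate} does not literally apply as stated. However, its proof reduces to iterating Lemma \ref{lem-temp-1}, whose hypothesis requires only $b_1+\delta\geq 0$, and passing to the closure (or a small perturbation $U'\mapsto U'+\varepsilon$ followed by $\varepsilon\to 0^+$) handles this degenerate configuration. I do not expect any real obstacle here: the hypothesis $a_1U'+2a_nsU''\geq h_0(s,U')$ has been engineered precisely so that, after concentrating all of the $U''$-contribution into the first eigenvalue slot via Lemma \ref{lem-subsolu-estimate}, it matches the threshold defined implicitly by $\overline g$.
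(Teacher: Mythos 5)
Your proof is correct and follows essentially the same argument as the paper: apply Lemma \ref{lem-subsolu-estimate}, then use $\sum_j a_j^2x_j^2\leq 2a_ns$ with $U''\leq 0$, the standing hypothesis, $h_0\geq h$, and the monotonicity of $f$ to conclude $F(D^2u)\geq\overline g(s)\geq g$. Your extra attention to the borderline case where the first component of $\overline a$ is only nonnegative is a valid refinement (the paper's proof passes over it silently), and your observation that Lemma \ref{lem-temp-1} requires only $b_1+\delta\geq 0$ is the right way to resolve it.
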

\begin{proof}
By Lemmas \ref{lem-subsolu-estimate} and \ref{lem-ImplicitFuncResult},
  \[
  F(D^2u)\geq f\left(a_1U'+\sum_{j=1}^na_j^2x_j^2U'',a_2U',\cdots,a_nU'\right)\geq f(a_1U'+2a_nsU'',a_2U',\cdots,a_nU')
  \]
  in $\mathbb R^n\setminus\{0\}$. Consequently by the inequalities satisfied by $U$, we continue to have
\[
F(D^2u)\geq f(h_0(s,U'),a_2U',\cdots,a_nU')\geq f(h(s,U'),a_2U',\cdots,a_nU')=\overline g(s)
\]
and this finishes the proof.
\end{proof}

To avoid the singularity at $s=0$, we construct solution to
\[
a_1U'+2a_n(s+1)U''=h_0(s,U')
\]
instead of
\[
a_1U'+2a_nsU''=h_0(s,U').
\]

\begin{lemma}\label{lem-ODEsolution-subsol}
  For any $w_0>\underline w(0)$, there exists a unique solution $W$ to
  \begin{equation}\label{equ-system-Dirich-subsol}
  \left\{
  \begin{array}{llllll}
    \displaystyle \dfrac{\mathrm{d} w}{\mathrm{d} s}=\dfrac{h_0(s,w)-a_1w}{2a_n(s+1)}, & \text{in } s>0,\\
    w(0)=w_0.\\
  \end{array}
  \right.
  \end{equation}
  Furthermore,
  \[
  W'(s)\leq 0,\quad \underline w(s)<W(s)\quad\text{in }s>0,
  \]
  and
  \begin{equation}\label{equ-temp-14}
  W(s)=1+\left\{
  \begin{array}{llllll}
    O(s^{-\min\{M(A),\frac{\beta}{2}\}}), & \text{if }M(A)\neq\frac{\beta}{2},\\
    O(s^{-\frac{\beta}{2}}\ln s), & \text{if }M(A)=\frac{\beta}{2},\\
  \end{array}
  \right.
  \end{equation}
  as $s\rightarrow\infty$.
\end{lemma}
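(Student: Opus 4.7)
The plan is to view \eqref{equ-system-Dirich-subsol} as a perturbation of the linear ODE $V' = -\tfrac{M(A)}{s+1} V$ about the equilibrium $W \equiv 1$, and to proceed in three stages: local existence, global confinement, and sharp asymptotic analysis.

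Since $w_0 > \underline w(0)$ puts $(0, w_0)$ strictly inside the open region where $h$ is smooth by Lemma \ref{lem-ImplicitFuncResult}, and since $\max\{0, \cdot\}$ is Lipschitz, the right-hand side of \eqref{equ-system-Dirich-subsol} is locally Lipschitz in $w$, and Picard-Lindel\"of yields a unique $C^1$ solution on a maximal interval $[0, S)$. To extend $W$ globally I would establish two a priori bounds. \emph{Monotonicity}: as long as $W(s) > \underline w(s)$, the strict $w$-decrease of $h$ and the identity $h(s, \underline w(s)) = a_1 \underline w(s)$ yield $h_0(s, W(s)) \leq a_1 \underline w(s) < a_1 W(s)$, hence $W'(s) < 0$. \emph{Confinement}: choosing $\overline g$ strictly decreasing in \eqref{equ-temp-3} (which is compatible with its defining asymptotic) makes $\underline w$ strictly decreasing, so at any putative first contact point $s_0$ one would have $W'(s_0) = 0 > \underline w'(s_0)$, contradicting $(W - \underline w)'(s_0) \leq 0$. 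Boundedness of $h_0$ then prevents blow-up and the solution extends to $[0, \infty)$.

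Monotonicity and $1 \leq \underline w < W \leq w_0$ give a limit $L \geq 1$; ruling out $L > 1$ by observing that $h(\infty, L) - a_1 L < 0$ would force $W'(s) \leq -c/(s+1)$ for large $s$ and $W \to -\infty$, we get $L = 1$. Setting $V := W - 1$, a Taylor expansion using $h(\infty, 1) = a_1$, $\partial_w h(\infty, 1) = a_1 - 2 a_n M(A)$ from \eqref{equ-temp-19}, and $h(s, w) - h(\infty, w) = O(s^{-\beta/2})$ recasts the ODE as
\[
V'(s) + \frac{M(A)}{s+1} V(s) = R(s), \qquad R(s) = O\bigl((s+1)^{-\beta/2 - 1}\bigr) + O\bigl(V(s)^2/(s+1)\bigr).
\]
Variation of parameters with integrating factor $(s+1)^{M(A)}$ then gives
\[
V(s) = (s+1)^{-M(A)} \biggl[V(0) + \int_0^s (t+1)^{M(A)} R(t)\, dt\biggr],
\]
and the integral $\int_0^s (t+1)^{M(A) - \beta/2 - 1}\, dt$ is bounded, grows like $(s+1)^{M(A) - \beta/2}$, or grows like $\ln(s+1)$ according to whether $M(A) < \beta/2$, $M(A) > \beta/2$, or $M(A) = \beta/2$, yielding exactly the three cases of \eqref{equ-temp-14}. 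The $V^2/(s+1)$ remainder is absorbed by a standard bootstrap iterating sharper power-decay bounds into $R$.

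The step I expect to require the most care is the confinement $W > \underline w$: the natural comparison argument uses both $h(s, \underline w(s)) = a_1 \underline w(s)$ and strict monotonicity of $\underline w$, so a careful choice of the upper phase envelope $\overline g$ is required, and potential tangencies must be handled by a differential-inequality argument rather than a pointwise contradiction.
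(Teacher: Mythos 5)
Your proposal is correct and follows essentially the same route as the paper: local existence via Picard--Lindel\"of, monotonicity from $h(s,\underline w(s))=a_1\underline w(s)$ and the strict $w$-decrease of $h$, confinement above $\underline w$ by a first-contact argument, identification of the limit $W(\infty)=1$, and linearization of the ODE at $w=1$ using $\partial_w h(\infty,1)-a_1=-2a_nM(A)$ together with the $O(s^{-\beta/2})$ decay of $h(s,\cdot)-h(\infty,\cdot)$. The only cosmetic difference is that the paper passes to the variable $t=\ln(s+1)$ and multiplies by $e^{M(A)t}$, whereas you apply variation of parameters directly in $s$ with integrating factor $(s+1)^{M(A)}$; these are the same computation. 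Your observation that the first-contact contradiction needs $\underline w'(s_0)<0$ rather than merely $\leq 0$ --- and hence that $\overline g$ should be chosen strictly decreasing, which is always possible given the required asymptotics --- is a point the paper leaves implicit (it asserts a contradiction from ``$\underline w$ is monotone decreasing'' even though at the contact point one only gets $W'(s_0)=\underline w'(s_0)=0$). Your version closes this small gap cleanly.
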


\begin{proof}
  Since $w_0>\underline w(0)$, by the smoothness of $\underline w$, there exists $s_0>0$ such that
  \[
  [0,s_0]\times\left[\frac{w_0+\underline w(0)}{2},\frac{3w_0-\underline w(0)}{2}\right]
  \subset \{(s,w)~|~s\geq 0,~w\geq\underline w(s)\}.
  \]
  Since $h(s,w)$ is a smooth function,
  the right hand side term $\frac{h_0(s,w)-a_1w}{2a_n(s+1)}$ is Lipschtiz in the rectangle  $[0,s_0]\times\left[\frac{w_0+\underline w(0)}{2},\frac{3w_0-\underline w(0)}{2}\right]$.
  By the existence and uniqueness theorem of ODE, such as the Picard--Lindel\"of theorem,  the
  initial value problem \eqref{equ-system-Dirich-subsol} admits locally a unique solution $W$ near $s=0$ and we shall prove that the solution can be extended to $s\in[0,+\infty)$.

  By Lemma \ref{lem-ImplicitFuncResult}, for all $s\geq 0$ and $w> \underline w(s)$,
  \[
  h(s,w)\leq a_1\underline w<a_1w\quad\text{and hence}\quad
  h_0(s,w)-a_1w<0.
  \]
  By the equation in \eqref{equ-system-Dirich-subsol},  $W$ is monotone decreasing as long as $W>\underline w$. Next,
  since $W(0)>\underline w(0)$,
  we claim that $W$ cannot touch $\underline w(s)$ from above.
  Arguing by contradiction, we suppose  there exists $s_0>0$ such that
  \[
  W(s_0)=\underline w(s_0)\quad\text{and}\quad W(s)>\underline w(s)\quad\text{in }[0,s_0).
  \]
  Then from the definition of derivative and the equation in \eqref{equ-system-Dirich-subsol}, we have
  \[
  W'(s_0)=\lim_{s\rightarrow s_0^-}\dfrac{W(s)-W(s_0)}{s-s_0}\leq \underline w'(s_0)\quad\text{but}\quad
  W'(s_0)=\dfrac{h_0(s_0,W(s_0))-a_1W(s_0)}{2a_n(s_0+1)}\geq 0.
  \]
  This becomes  a contradiction since $\underline w$ is a monotone decreasing function.
  Combining the results above, $W$ is monotone decreasing and  $W(s)>\underline w(s)$ holds as long as $W(s)$ exists. By the Carath\'eodory extension theorem of ODE, the solution $W$ exists on entire $[0,+\infty)$.

  Now we prove the asymptotic behavior of $W$ near infinity.
  Since $W$ is monotone decreasing and bounded from below by $\underline w(s)\geq 1$, $W$ admits a finite limit $W(\infty)\geq 1$ at infinity and we claim that $W(\infty)=1$. Arguing by contradiction, if $W(\infty)>1$, then there exists $\epsilon>0$ such that
  \[
  \dfrac{\mathrm{d} W(s)}{\mathrm{d} s}=\dfrac{h_0(s,W(s))-a_1W(s)}{2a_n(s+1)}\geq \frac{\epsilon }{s}\quad\text{in }s>1.
  \]
  This contradicts to the fact that $W$ converge to $W(\infty)$ at infinity. Next, we refine the asymptotic behavior by setting
  \[
  t:=\ln (s+1)\in (0,+\infty)\quad\text{and}\quad \varphi(t):=W(s(t))-1.
  \]
  By a direct computation, for all $t\in (0,+\infty)$,
  \[
  \begin{array}{lllll}
  \varphi'(t)&=& W'(s(t))\cdot e^t\\
  &=&\displaystyle \dfrac{h_0(s(t),\varphi+1)-a_1(\varphi+1)}{2a_n}\\
  &=&\displaystyle
  \dfrac{h_0(s(t),\varphi+1)-h_0(\infty,\varphi+1)}{2a_n}
  +\dfrac{h_0(\infty,\varphi+1)-a_1(\varphi+1)}{2a_n}\\
  &=:& h_1(t,\varphi)+ h_2(\varphi).\\
  \end{array}
  \]
  Furthermore, by \eqref{equ-temp-19}, there exists $C>0$ such that for all $t\gg 1$ and $|\varphi|\ll 1$,
  \[
  |h_1(t,\varphi)|\leq Ce^{-\frac{\beta}{2}t},\quad
  \dfrac{\mathrm{d} h_2}{\mathrm{d} \varphi}(0)=-M(A)\quad\text{and}\quad
  \left| h_2(\varphi)-\dfrac{\mathrm{d} h_2}{\mathrm{d} \varphi}(0)\cdot\varphi\right|
  \leq C\varphi^2.
  \]
   Consequently, $\varphi$ satisfies
  \begin{equation}\label{equ-temp-12}
  \varphi'(t)=-M(A)\varphi+O(e^{-\frac{\beta}{2}t})+O(\varphi^2)
  \end{equation}
  as $t\rightarrow\infty$ and $\varphi\rightarrow 0.$
  By the asymptotic stability of ODE (see for instance Theorem 1.1 of Chap.13 in \cite{Book-Coddington-Levinson-ODE} or Theorem 2.16 in \cite{Book-Bodine-Lutz-AsymptoticIntegration}), we have
  \begin{equation}\label{equ-temp-13}
  \varphi(t)=\left\{
  \begin{array}{llll}
    O(e^{-M(A)t}), & \text{if }M(A)<\frac{\beta}{2},\\
    O(te^{-\frac{\beta}{2}t}), & \text{if }M(A)=\frac{\beta}{2},\\
    O(e^{-\frac{\beta}{2}t}), & \text{if }M(A)>\frac{\beta}{2}.
  \end{array}
  \right.
  \end{equation}
  More explicitly, since $\varphi>0$, for any  sufficiently small $\epsilon>0$, $\varphi$ satisfies
  \[
  \varphi'\leq -(M(A)-\epsilon)\varphi+Ce^{-\frac{\beta}{2}t},\quad\forall~t>T_0,
  \]
  for some $C, T_0$ sufficiently large. Multiplying both sides by $e^{(M(A)-\epsilon)t}$ and taking integral over $(T_0,t)$, there exists $C>0$ such that
  \[
  \varphi\leq \left\{
  \begin{array}{lll}
    Ce^{-\min \{M(A)-\epsilon,\frac{\beta}{2}\}t}, & \text{if }M(A)-\epsilon\neq \frac{\beta}{2},\\
    Cte^{-\min \{M(A)-\epsilon,\frac{\beta}{2}\}t}, & \text{if }M(A)-\epsilon= \frac{\beta}{2},\\
  \end{array}
  \right.
  \]
  for $t>T_0$. Putting this estimate into equation \eqref{equ-temp-12} and choosing sufficiently small $\epsilon$, we have
  \[
  \varphi'\leq -M(A)\varphi+Ce^{-\frac{\beta}{2}t}+
  \left\{
  \begin{array}{lll}
    Ce^{-2\min \{M(A)-\epsilon,\frac{\beta}{2}\}t}, & \text{if }M(A)-\epsilon\neq \frac{\beta}{2},\\
    Ct^2e^{-2\min \{M(A)-\epsilon,\frac{\beta}{2}\}t}, & \text{if }M(A)-\epsilon= \frac{\beta}{2},\\
  \end{array}
  \right.\quad\forall~t>T_0',
  \]
  for some $C, T_0'$ sufficiently large. Multiplying both sides by $e^{M(A)t}$ and taking integral over $(T_0,t)$, we have the desired estimate \eqref{equ-temp-13}.
  This proves the desired asymptotic behavior of $W$.
\end{proof}

\begin{proof}[Proof of Lemma \ref{lem-subsolution-Existence}]
  Take any $w_0>\underline w(0)$ and $W$ to be the solution to \eqref{equ-system-Dirich-subsol} from Lemma \ref{lem-ODEsolution-subsol}. Let
  \[
  \underline u(x):=\underline U(s(x)):=\int_0^{s}W(r)\mathrm dr+C,
  \]
  where $C$ is a constant to be determined.

  Firstly, we prove $\underline u$ is a subsolution in $\mathbb R^n\setminus\{0\}$. By Corollary \ref{Coro-temp1}, for all $x\in\mathbb R^n\setminus\{0\}$,
  \[
  \begin{array}{lllll}
  F(D^2\underline u)&\geq&
  f(a_1\underline U'+2a_n(s+1)\underline U'',a_2\underline U',\cdots,a_n\underline U')\\
  &=& f(h_0(s,W),a_2W,\cdots,a_nW)\\
  &\geq &\overline g(s).\\
  \end{array}
  \]
  Consequently $\underline u$ is a subsolution to \eqref{equ-Lagrangian} in $\mathbb R^n\setminus\{0\}$ as long as all components of $\overline a$ are positive. In fact, for all $x\in\mathbb R^n\setminus\{0\}$,
  \[
  \begin{array}{llllll}
    0&\leq & h_0(s,W)\\
    &=& a_1W+2a_n(s+1)W'\\
    &<&\displaystyle a_1W+\sum_{j=1}^na_j^2x_j^2W',
  \end{array}
  \]
  and this finishes the proof of this part.

  Secondly, we compute the asymptotic behavior at infinity and choose $C$. By Lemma \ref{lem-ODEsolution-subsol}, since $M(A)>1$ and $\beta>2$, we may choose
  \[
  C:=
  \lim_{s\rightarrow\infty}\int_0^{s}(1-W(r))\mathrm dr.
  \]
  This makes $\underline u$ satisfies the desired asymptotic behavior at infinity.

  Eventually, noticing that
  \[
  D\underline u(x)=\underline U'(s(x))\cdot Ax\rightarrow 0\quad\text{as }x\rightarrow 0,
  \]
  by Theorem 1.1 in \cite{Caffarelli-Li-Nirenberg--ViscositySol-III}, the functions $\underline u$ is a subsolution to \eqref{equ-Lagrangian} in $\mathbb R^n$ in viscosity sense.
\end{proof}

\section{Construction of supersolution}\label{sec-ConstrucSupSol}

In this section, we prove the existence of supersolution with prescribed asymptotic behavior at infinity.

\begin{lemma}\label{lem-supersolution-Existence}
  For any $\beta>2$ and positive-definite $A\in\mathtt{Sym}(n)$ satisfying \eqref{equ-temp-4}, there exists a function $\overline u$ generalized symmetric to $A$ and is a viscosity supersolution to \eqref{equ-Lagrangian} in $\mathbb R^n$ with asymptotic behavior as in \eqref{equ-temp-2}.
\end{lemma}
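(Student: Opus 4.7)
The strategy is exactly mirror-symmetric to the one executed in Section~\ref{sec-ConstrucSubSol}. I would first establish the refined upper bound alluded to in the introduction, namely
\[
F(D^2u)\leq f\bigl(a_1U'+(1+J)\,a_n x^{T}Ax\,U'',\,a_2U'+J\,a_n x^{T}Ax\,U'',\ldots,a_nU'+J\,a_n x^{T}Ax\,U''\bigr),
\]
for a carefully chosen weight $J=J(x,A,U',U'')$. Starting from the decomposition \eqref{equ-temp-15} with $V=\sum_j a_j^2 x_j^2 U''\leq 0$, the \emph{dual} of Lemma~\ref{lem-temp-1} (moving a non-positive perturbation from a smaller to a larger slot \emph{increases} the sum of arctangents) suggests concentrating $V$ on the higher-indexed components. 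The function $J$ must be chosen so that (i) every argument of $f$ on the right remains strictly positive, and (ii) the asymptotic rate at infinity still matches $M(A)$. The bound $V\geq a_n x^{T}Ax\,U''$ (which follows from $a_j^2\leq a_n a_j$ and $U''\leq 0$) lets one work uniformly with the single quantity $a_n x^{T}Ax\,U''$ times admissible weights summing to the right amount.

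With this refined estimate at hand I would replay the pipeline of Section~\ref{sec-ConstrucSubSol} but with $\overline g$ replaced by $\underline g$, since a supersolution of $F=\underline g\leq g$ is automatically a supersolution of $F=g$. Applying the implicit function theorem in the spirit of Lemma~\ref{lem-ImplicitFuncResult}, I would produce a smooth $\widetilde h(s,w)$ with $\widetilde h(\infty,1)=a_1$ and the same linearization constant $\frac{1}{2a_n}\bigl(\partial_w\widetilde h(\infty,1)-a_1\bigr)=-M(A)$. The refined estimate then reduces the supersolution condition to an ODE of the form
\[
\frac{\mathrm d w}{\mathrm d s}=\frac{\widetilde h(s,w)-a_1w}{2a_n(s+1)},\qquad w(0)=w_0,
\]
with $w_0$ chosen on the appropriate side of $\widetilde w(0)$ (and the $(s+1)$ regularization used to avoid the origin, just as in \eqref{equ-system-Dirich-subsol}).

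Existence, global extendibility, monotonicity, and the sharp asymptotic rate
\[
\overline W(s)=1+\begin{cases} O(s^{-\min\{M(A),\beta/2\}}), & M(A)\neq \beta/2,\\ O(s^{-\beta/2}\ln s), & M(A)=\beta/2, \end{cases}
\]
then follow from the same Picard--Lindel\"of / Carath\'eodory / asymptotic-stability chain used in Lemma~\ref{lem-ODEsolution-subsol}, because $\overline W-1$ satisfies an ODE of the type \eqref{equ-temp-12}. Setting $\overline u(x):=\int_0^{s(x)}\overline W(r)\,\mathrm d r + C$ with
\[
C:=\lim_{s\to\infty}\int_0^{s}\bigl(1-\overline W(r)\bigr)\,\mathrm d r
\]
(convergent because $M(A)>1$ and $\beta>2$) gives exactly the asymptotic behavior \eqref{equ-temp-2}. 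The classical supersolution property on $\mathbb R^n\setminus\{0\}$ follows from a corollary analogous to Corollary~\ref{Coro-temp1}, and Theorem~1.1 of \cite{Caffarelli-Li-Nirenberg--ViscositySol-III} (together with $D\overline u(0)=0$) promotes $\overline u$ to a viscosity supersolution on all of $\mathbb R^n$.

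The principal obstacle is the construction of $J$: simultaneously preserving positivity of every $\lambda_i$, keeping the weighted perturbations small enough that the implicit function theorem produces a well-defined $\widetilde h$ on an admissible region, and ensuring that the linearization at $(s,w)=(\infty,1)$ still delivers exactly the rate $M(A)$. This is precisely why the refined Lemma~\ref{lem-supsolu-estimate-refine} is introduced in place of the cruder Lemma~\ref{lem-supsolu-estimate}, whose conclusion only holds for sufficiently large $|x|$ and so cannot sustain a construction valid on the whole of $\mathbb R^n$.
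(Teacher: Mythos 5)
Your high-level pipeline (refined pointwise estimate $\Rightarrow$ implicit function theorem $\Rightarrow$ regularized first-order ODE $\Rightarrow$ asymptotic stability $\Rightarrow$ viscosity extension across the origin) matches the paper, but the execution has two genuine gaps at the heart of the argument. First, your sign is backwards: for a supersolution one takes $U''\geq 0$ (so that $U'$ can increase to $1$ from below), hence $V=\sum_j a_j^2x_j^2U''\geq 0$, not $\leq 0$. More seriously, the ``dual of Lemma~\ref{lem-temp-1}'' idea you invoke cannot deliver the refined estimate at all. Lemma~\ref{lem-temp-1} (and its sign-reversed version) requires the two slots to remain ordered when the perturbation is transferred; with $V\leq 0$ one has $a_1U'+\theta_1V\leq a_1U'\leq a_2U'$, so the ordering is automatic, but with $V\geq 0$ one can have $a_1U'+\theta_1V>a_2U'$, and the one-slot concentration $\bar a=(a_1U'+V,a_2U',\ldots,a_nU')$ is precisely the bound the paper flags as \emph{failing in general} (e.g.\ when all $a_i$ coincide). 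The actual refined estimate in Lemma~\ref{lem-supsolu-estimate-refine} does not move $V$ between slots at all: it spreads a correction $J\cdot 2a_nsU''$ over \emph{every} component and proves the inequality by a Newton--Leibniz integral comparison with explicit lower/upper kernels $\underline\Psi,\overline\Psi$, which is a different mechanism.

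Second, the ODE you write down does not close the loop with the refined estimate. Defining $\widetilde h$ by $f(\widetilde h(s,w),a_2w,\ldots,a_nw)=\underline g(s)$ and solving $\frac{\mathrm dw}{\mathrm ds}=\frac{\widetilde h(s,w)-a_1w}{2a_n(s+1)}$ forces $a_1U'+2a_n(s+1)U''=\widetilde h(s,U')$, so what you control is $f(\widetilde h,a_2U',\ldots,a_nU')=\underline g$. But the refined estimate bounds $F(D^2u)$ by $f(\bar a_J)$ with nonnegative $J$-corrections added to every slot, and there is no reason $f(\bar a_J)\leq f(\widetilde h,a_2U',\ldots,a_nU')$; in fact slots $2,\ldots,n$ are strictly larger. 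The paper resolves this by defining a different implicit function $H(s,w)$ via the full $J$-weighted equation
\[
f\bigl(a_1w+2a_n(1+J(w,H))H,\;a_2w+2a_nJ(w,H)H,\ldots,a_nw+2a_nJ(w,H)H\bigr)=\underline g(s),
\]
and then solving $\frac{\mathrm dw}{\mathrm ds}=\frac{H(s,w)}{s+1}$. With $U''=\frac{H(s,U')}{s+1}$ one gets $sU''\leq H(s,U')$ and, by monotonicity of $f$, exactly $F(D^2u)\leq f(\bar a_J)\leq\underline g(s)$. The linearization $\partial_w H(\infty,1)=-M(A)$ survives because $J(1,0)=0$, so the asymptotic-stability step goes through as you describe, but only after the ODE is set up with $H$ rather than $\widetilde h$.
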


\begin{remark}
  When $g(x)\geq g(\infty)$,  $\frac{1}{2}x^TAx$ is a supersolution to \eqref{equ-Lagrangian} in $\mathbb R^n$ with desired asymptotic behavior. In this case, we may take $\overline u(x):=\frac{1}{2}x^TAx$ as the supersolution directly.
\end{remark}

Different from the construction of subsolution, when $U'>0$ and $U''\geq 0$,  inequality
\[
F(D^2u)\leq f(\bar a),\quad\text{where}\quad\overline a:=\left(a_1U'+\sum_{j=1}^na_j^2x_j^2U'',a_2U',\cdots,a_nU'\right),
\]
fails in general, especially when all $a_i$ are the same. We would like to mention that in Lemmas 3.1 and 3.2 in \cite{Li-Wang-EntireDirichlet}, the authors proved the following estimate of $F(D^2u)$ at infinity under additional assumptions on $U'$ and $U''$.
\begin{lemma}\label{lem-supsolu-estimate}
  Let $\delta>0$,  $u, U$ and $A$ be as in Lemma \ref{lem-subsolu-estimate}.
  Assume $U'>0$ and $U''\geq 0$, then
\begin{equation}\label{equ-EstimateEigenvalue-2}
a_{i} U^{\prime}(s) \leq \lambda_{i}\left(D^{2} u(x)\right) \leq a_{i} U^{\prime}(s)+\sum_{j=1}^{n} a_{j}^{2} x_{j}^{2} U^{\prime \prime}(s), \quad \forall~ 1 \leq i \leq n,
\end{equation}
If in addition,
\[
\lim_{s\rightarrow+\infty}U'=1\quad\text{and}\quad\lim_{s\rightarrow+\infty}sU''(s)=0.
\]
Then there exists $\bar s=\bar s(A,\delta, U',U'')>0$ such that for any $s>\bar s$,
\[
F(D^2u)\leq f(\bar a_{\delta}),\quad\text{where}\quad\bar a_{\delta}:=\left(
a_1U'+(2a_n+\delta)sU'', a_2U',\cdots,a_nU'
\right).
\]
\end{lemma}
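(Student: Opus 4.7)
My plan is to prove Lemma \ref{lem-supsolu-estimate} in two stages, corresponding to the two assertions.

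\textbf{Stage 1 (the eigenvalue bounds \eqref{equ-EstimateEigenvalue-2}).} I would decompose the Hessian as
\[
D^2u(x)=U'(s)\,A+U''(s)\,(Ax)(Ax)^{T},
\]
and observe that the second summand is a rank-one positive semidefinite matrix whose nonzero eigenvalue equals $U''|Ax|^2=U''\sum_j a_j^2x_j^2$ while its other $n-1$ eigenvalues vanish. The matrix $U'A$ has eigenvalues $a_1U'\le\dots\le a_nU'$. Weyl's inequality (Horn--Johnson) then gives
\[
\lambda_i(U'A)+\lambda_n\bigl(U''(Ax)(Ax)^{T}\bigr)\le\lambda_i(D^2u)\le\lambda_i(U'A)+\lambda_1\bigl(U''(Ax)(Ax)^{T}\bigr),
\]
which is exactly \eqref{equ-EstimateEigenvalue-2}.

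\textbf{Stage 2 (the concentration inequality for large $s$).} By Stage 1 I may write $\lambda_i(D^2u)=a_iU'+\mu_i$ with $0\le\mu_i\le V$, where
\[
V:=\sum_{j=1}^n a_j^2x_j^2\,U''\le a_n\sum_{j=1}^n a_jx_j^2\,U''=2a_n s\,U''.
\]
The trace identity \eqref{equ-trace} yields $\sum_i\mu_i=V$. Set $\bar\delta:=(2a_n+\delta)sU''$, so that $\bar\delta-V\ge\delta s U''\ge0$ and $\mu_1\le\bar\delta$. I then expand the desired quantity as a difference of integrals:
\[
f(\bar a_\delta)-F(D^2u)=\int_{a_1U'+\mu_1}^{a_1U'+\bar\delta}\frac{dt}{1+t^2}-\sum_{i=2}^{n}\int_{a_iU'}^{a_iU'+\mu_i}\frac{dt}{1+t^2}.
\]
Bounding each integrand by the value of $1/(1+t^2)$ at the right (resp. left) endpoint gives
\[
f(\bar a_\delta)-F(D^2u)\ge\frac{\bar\delta-\mu_1}{1+(a_1U'+\bar\delta)^2}-\frac{V-\mu_1}{1+(a_2U')^2},
\]
where I used $a_i\ge a_2$ for $i\ge2$ to concentrate the sum into a single fraction. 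Clearing denominators, the nonnegativity of the right-hand side reduces to
\[
(\bar\delta-V)\bigl[1+(a_2U')^2\bigr]\ \ge\ (V-\mu_1)\Bigl[(a_1U'+\bar\delta)^2-(a_2U')^2\Bigr],
\]
uniformly for $\mu_1\in[0,V]$.

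\textbf{Where the assumptions kick in.} Under $U'\to1$ and $sU''\to0$ one has $\bar\delta\to0$ and $V\le 2a_nsU''\to0$, hence $(a_1U'+\bar\delta)^2\to a_1^2$ and $(a_2U')^2\to a_2^2$. If $a_1<a_2$, the right-hand side above becomes nonpositive for $s$ large while the left-hand side is at least $\delta sU''(1+a_1^2)+o(sU'')>0$; so the inequality holds. If $a_1=a_2$, the right-hand side equals $(V-\mu_1)(2a_1U'+\bar\delta)\bar\delta=O\bigl((sU'')^2\bigr)$, whereas the left-hand side is still $\ge\delta sU''(1+a_1^2)+o(sU'')$, so the linear term in $sU''$ dominates the quadratic one and the inequality again holds. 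Collecting these two cases, there exists $\bar s=\bar s(A,\delta,U',U'')$ beyond which $f(\bar a_\delta)-F(D^2u)\ge0$.

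\textbf{Main obstacle.} The delicate point is case $a_1=a_2$, where the naive concentration onto the smallest component costs a term of order $(V-\mu_1)(a_1U'+\bar\delta)^2$ that is only of order $sU''$ rather than $(sU'')^2$; the balance is then exactly the $\delta sU''$ slack built into $\bar\delta$. This is why the conclusion must allow the extra parameter $\delta>0$ and hold only for $s$ beyond a threshold depending on $U'$ and $U''$, rather than pointwise.
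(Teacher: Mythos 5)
Your proof is correct. Note that the paper itself gives no proof of this lemma: it is quoted from Lemmas 3.1 and 3.2 of \cite{Li-Wang-EntireDirichlet}, so there is nothing to compare against directly. That said, your argument is in exactly the same spirit as the paper's proof of the refined version (Lemma \ref{lem-supsolu-estimate-refine}): write $\lambda_i(D^2u)=a_iU'+\mu_i$ with $0\le\mu_i\le V$ and $\sum_i\mu_i=V$ via Weyl plus the trace identity, express the difference $f(\bar a_\delta)-F(D^2u)$ as a signed sum of integrals of $1/(1+t^2)$, and bound each integral by its endpoint values; the only place the hypotheses $U'\to1$, $sU''\to0$ and the slack $\delta$ enter is to absorb the $O((sU'')^2)$ loss in the degenerate case $a_1=a_2$, which you identify correctly. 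The sole cosmetic quibble is the phrase ``the left-hand side is $>0$'': when $U''(s)=0$ both sides of your cleared inequality vanish, so the conclusion should be stated as $\ge$, which is all that is needed.
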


In this section, we provide the following estimate of $F(D^2u)$ that works for all $s>0$ under weaker assumptions on $U'$ and $U''$.
\begin{lemma}\label{lem-supsolu-estimate-refine}
  Let   $u$, $U$, $A$ be as in Lemma \ref{lem-subsolu-estimate}. Assume  $U'>0$ and $U''\geq 0$.
  Then there exists a nonnegative smooth function $J=J(U',(s+1)U'')$ such that
  \[
  F(D^2u)\leq f(\bar a_J),
  \]
  where
  \[
  \bar a_J:=\left(
  a_1U'+(1+J)\cdot 2a_nsU'',a_2U'+J\cdot 2a_nsU'',\cdots,a_nU'+J\cdot 2a_nsU''
  \right).
  \]
  More explicitly,  we may choose
  \begin{equation}\label{equ-temp-16}
  J(w,H):=\dfrac{(\sqrt{1+(a_1w)^2}+4a_1H)^2\cdot (a_1w+(a_1w)^3+4a_nH)}{(1+(a_1w)^2)^2\cdot a_1w}-1,
  \end{equation}
  which is monotone increasing with respect to $H$  and satisfies $J(w,0)=0$.
\end{lemma}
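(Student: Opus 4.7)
The plan is to verify that the explicit $J$ given by \eqref{equ-temp-16} satisfies the claimed estimate. Following the proof of Lemma \ref{lem-subsolu-estimate}, by \eqref{equ-temp-15} there exist $\theta_i\in[0,1]$ with $\sum_{i=1}^n\theta_i=1$ such that $\lambda_i(D^2u)=a_iU'(s)+\theta_iV$ where $V:=\sum_{j=1}^n a_j^2x_j^2U''(s)\ge 0$. Since $a_j\le a_n$ and $s=\tfrac{1}{2}\sum_j a_jx_j^2$, one obtains the key scalar bound $V\le 2a_nsU''$. Writing $w=U'$ and $H=(s+1)U''$, the claim reduces to showing that for every admissible $\theta$,
\[
\sum_{i=1}^n\arctan(a_iw+\theta_iV)\le \arctan\bigl(a_1w+(1+J)\cdot 2a_nsU''\bigr)+\sum_{i=2}^n\arctan\bigl(a_iw+J\cdot 2a_nsU''\bigr).
\]

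A crucial observation is that the \emph{concentrate all mass at coordinate $1$} argument used for the subsolution (Lemma \ref{lem-subsolu-estimate}, via iterated application of Lemma \ref{lem-temp-1}) goes in the wrong direction here. When $V\ge 0$, Lemma \ref{lem-temp-1} shows that concentration actually \emph{maximizes} $\sum_i\arctan(\lambda_i)$, so a plain concentration bound fails as an upper bound. Since by Schur-concavity $\sum\arctan\lambda_i$ is maximized by the most equal configurations (subject to $\lambda_i\ge a_iw$), one must simultaneously raise \emph{all} coordinates of $\bar a_J$ by $J\cdot 2a_nsU''$ (to dominate near-equal configurations) and add the extra $2a_nsU''$ to the smallest coordinate (to handle the extreme $\theta_1=1$). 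Concretely I would use concavity of $\arctan$ on $[0,\infty)$ in two ways. The tangent inequality $\arctan(a_iw+\theta_iV)\le \arctan(a_iw)+\frac{\theta_iV}{1+(a_iw)^2}$ combined with $\sum\theta_i=1$ and $a_1\le a_i$ yields the upper bound
\[
f(\lambda)-f(aw)\le \frac{V}{1+(a_1w)^2}\le \frac{2a_nsU''}{1+(a_1w)^2}.
\]
The secant inequality $\arctan(a_iw+t)-\arctan(a_iw)\ge \frac{t}{1+(a_iw+t)^2}$ then yields a matching lower bound for $f(\bar a_J)-f(aw)$ with $t=(1+J)\cdot 2a_nsU''$ for $i=1$ and $t=J\cdot 2a_nsU''$ for $i\ge 2$. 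Comparing the two sides reduces the lemma to an essentially quadratic algebraic inequality in the scalars $a_1w$ and $a_nH$, and the factored form
\[
1+J=\left(1+\frac{4a_1H}{\sqrt{1+(a_1w)^2}}\right)^{\!2}\!\left(1+\frac{4a_nH}{a_1w(1+(a_1w)^2)}\right)
\]
is engineered precisely so that each bracket absorbs one of the two error terms. The asserted properties $J\ge 0$, $J(w,0)=0$, and monotonicity in $H$ follow immediately by inspection: both brackets equal $1$ when $H=0$, and both are strictly positive and strictly increasing in $H$.

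The main obstacle is the simultaneous dispersion: we must lift every coordinate (not just the first) to dominate the near-equal configurations that maximize $\sum\arctan$ by Schur-concavity, while keeping the uniform lift $J\cdot 2a_nsU''$ small enough that the supersolution $\overline u$ constructed from $J$ in the sequel still carries the prescribed asymptotic behavior at infinity. This is why the formula for $J$ carries the precise $w$- and $H$-dependence in \eqref{equ-temp-16}: the denominator factor $1+(a_1w)^2$ ensures that $J$ decays at the correct rate as $H\to 0^+$, while the squaring in the first bracket delivers exactly the quadratic correction needed to absorb the denominator $1+(a_1w+(1+J)\cdot 2a_nsU'')^2$ appearing in the secant bound.
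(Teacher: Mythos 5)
You correctly diagnose the structural obstruction: when $U''\ge 0$, concentrating the second-order contribution at coordinate $1$ no longer yields an upper bound (the concave maximizer of $\sum_i\arctan$ is near-equal weights), so the lift must be distributed to all coordinates. However, the tangent/secant scheme you propose does not actually prove the inequality, and it is not the paper's argument. Your tangent estimate $f(\lambda)-f(aw)\le V/(1+(a_1w)^2)$ is attained only at the concentrated configuration $\theta_1=1$ and overshoots the true maximum; your secant lower bound on $f(\bar a_J)-f(aw)$ loses in the opposite direction by replacing each increment by its endpoint slope. These two slacks are not compatible, and the intermediate comparison you need, namely
\[
\frac{1}{1+(a_1w)^2}\le\frac{1+J}{1+(a_1w+(1+J)t)^2}+\sum_{i\ge 2}\frac{J}{1+(a_iw+Jt)^2},\qquad t:=2a_nsU'',
\]
is simply false for the $J$ of \eqref{equ-temp-16}. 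Two concrete failures: (i) with $a_1=\cdots=a_n=1$, $U'=1$, $H=\tfrac12$, so $J\approx10.7$ and $t\to1$ as $s\to\infty$, the right side is $\approx 0.23$ while the left is $0.5$; (ii) even for arbitrarily small $H$, take $n=3$, $a_1=a_2=1$, $a_3=5$, $U'=10$: a first-order expansion in $H$ shows the comparison requires $J\gtrsim 0.97H$, while \eqref{equ-temp-16} gives only $J\approx 0.82H$. In both examples $F(D^2u)\le f(\bar a_J)$ is still true, so it is your chain of bounds, not the lemma, that breaks.

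You also omit the preliminary reduction to $J<1$, which the paper uses: for $J\ge1$ one has $\theta_i\le1\le J$, hence $\lambda_i(D^2u)\le a_iU'+JV$ coordinate-wise, and $F(D^2u)\le f(\hat a_J)\le f(\bar a_J)$ immediately. For $J<1$ the paper does not linearize around the fixed reference $f(aw)$. It instead writes $F(D^2u)-f(\hat a_J)$ exactly via Newton--Leibniz as $I_1+I_2$, with $I_1=-\int_{\theta_1V}^{(1+J)V}(1+(a_1U'+t)^2)^{-1}\,dt$ and $I_2=\sum_{j\ge2}\int_{JV}^{\theta_jV}(1+(a_jU'+t)^2)^{-1}\,dt$, replaces the integrands by the exactly integrable $(\sqrt{1+(a_1U')^2}+t)^{-2}$ up to multiplicative corrections $\underline\Psi\le 1\le\overline\Psi$, integrates, and then uses $\sum_i\theta_i=1$ to cancel the $\theta$-dependence, arriving at the clean sufficient condition $J\ge\overline\Psi/\underline\Psi-1$, which \eqref{equ-temp-16} is then shown to satisfy. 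That exact-integral cancellation after summing over $j$ is precisely the content your tangent/secant device gives away.
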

\begin{proof} By \eqref{equ-trace} and \eqref{equ-EstimateEigenvalue-2}, there exists $0\leq \theta_i(s)\leq 1$, $i=1,\cdots,n$ such that \eqref{equ-temp-15} holds.
  Let
  \[
  V:=\sum_{j=1}^na_j^2x_j^2U''\quad\text{and}\quad
  \widehat a_J:=\left(a_1U'+(1+J)V,a_2U'+JV,\cdots,a_nU'+JV\right).
  \]
  Especially, for all $s>0$,
  \begin{equation}\label{equ-temp-17}
  0\leq 2a_1sU''\leq V(x)\leq 2a_nsU''.
  \end{equation}
Obviously when $J\geq 1$, we always have
\[
F(D^2u)<f(\widehat a_J)\leq f(\bar a_J).
\]
Hence we only need prove the desired result under the case of $J(U',(s+1)U'')<1.$

  By the Newton--Leibnitz formula, for any $s>0$,
  \[
  \begin{array}{lll}
    F(D^2u)-f(\widehat a_J)&=& f(a_1U'+\theta_1V,a_2U'+\theta_2V,\cdots,a_nU'+\theta_nV)-f(\widehat a_J)\\
    &=&\displaystyle -\int_{\theta_1V}^{(1+J)V}\dfrac{1}{1+(a_1U'+t)^2}\mathrm dt
    +\sum_{j=2}^n\int^{\theta_jV}_{JV}\dfrac{1}{1+(a_jU'+t)^2}\mathrm dt\\
    &=:& I_1+I_2.
  \end{array}
  \]
By a direct computation,
 \[
 \begin{array}{lllll}
   I_2&=&\displaystyle \sum_{j=2}^n\int^{\theta_jV}_{JV}\dfrac{1}{1+(a_jU'+t)^2}\mathrm dt
   \\
   &\leq & \displaystyle
   \sum_{j=2,\cdots,n\atop \theta_j\geq J}\int^{\theta_jV}_{JV}\dfrac{1}{1+(a_1U'+t)^2}\mathrm dt\\
   &\leq & \displaystyle \overline \Psi(U',sU'')\sum_{j=2,\cdots,n\atop \theta_j\geq J}\int_{JV}^{\theta_jV}\dfrac{1}{(\sqrt{1+(a_1U')^2}+t)^2}\mathrm dt\\
   &=&\displaystyle \overline \Psi(U',sU'')\sum_{j=2,\cdots,n\atop \theta_j\geq J} \dfrac{(\theta_j-J)V}{(\sqrt{1+(a_1U')^2}+JV)(\sqrt{1+(a_1U')^2}+\theta_jV)},\\
 \end{array}
 \]
 where $\overline \Psi(U',sU'')\geq 1$ is from the following estimate,
 \[
 \begin{array}{llll}
   \displaystyle \dfrac{(\sqrt{1+(a_1U')^2}+t)^2}{1+(a_1U'+t)^2} & = &
   \dfrac{t^2+2\sqrt{1+(a_1U')^2}t+1+(a_1U')^2}{t^2+2a_1U't+1+(a_1U')^2}\\
   &=& 1+\dfrac{2(\sqrt{1+(a_1U')^2}-a_1U')t}{t^2+2a_1U't+1+(a_1U')^2}\\
   &\leq & \displaystyle 1+\dfrac{2t}{(1+(a_1U')^2)\cdot (\sqrt{1+(a_1U')^2}+a_1U')}\\
   &\leq  & 1+\dfrac{4a_nsU''}{(1+(a_1U')^2)\cdot a_1U'}=:\overline\Psi (U',sU'').\\
 \end{array}
 \]
 Estimate \eqref{equ-temp-17} is used in the last inequality above.
 We thus get
 \[
 \begin{array}{lllll}
 \displaystyle
 I_2 &\leq&\displaystyle\overline \Psi(U',sU'')\cdot \dfrac{(\theta_2+\cdots+\theta_n)V}{(\sqrt{1+(a_1U')^2}+JV)(\sqrt{1+(a_1U')^2}+\theta_jV)}\\
 &\leq & \displaystyle
 \overline \Psi(U',sU'')\cdot \dfrac{(\theta_2+\cdots+\theta_n)V}{1+(a_1U')^2}.\\
 \end{array}
 \]
 Similarly, noticing the fact that for all $t\geq 0$, $1+(a_1U'+t)^2\leq (\sqrt {1+(a_1U')^2}+t)^2$ and hence
 \[
 \begin{array}{lllll}
   I_1&=& \displaystyle -\int_{\theta_1V}^{(1+J)V}\dfrac{1}{1+(a_1U'+t)^2}\mathrm dt\\
   &\leq &
   \displaystyle
   - \int_{\theta_1V}^{(1+J)V}\dfrac{1}{(\sqrt {1+(a_1U')^2}+t)^2}\mathrm dt\\
   &=& \displaystyle - \dfrac{(1+J-\theta_1)V}{(\sqrt {1+(a_1U')^2}+\theta_1V)(\sqrt {1+(a_1U')^2}+(1+J)V)}.\\
 \end{array}
 \]
Since we only need to prove for the case of $J\leq 1$,
 we continue to have
 \[
 \begin{array}{llll}
   I_1   &\leq & \displaystyle - \dfrac{(1+J-\theta_1)V}{(\sqrt {1+(a_1U')^2}+2V)^2}.\\
   & \leq &
   \displaystyle - \dfrac{(1+J-\theta_1)V}{(\sqrt {1+(a_1U')^2}+4a_nsU'')^2}\\
   &=& \displaystyle \underline\Psi (U',sU'')\cdot \dfrac{(\theta_1-(1+J))V}{1+(a_1U')^2},
 \end{array}
 \]
 where
 \[
 \underline\Psi(U',sU''):=\dfrac{1+(a_1U')^2}{(\sqrt{1+(a_1U')^2}+4a_nsU'')^2}.
 \]
 Especially, $\underline \Psi\leq 1<\overline\Psi$.
 By a direct computation and the second equality in \eqref{equ-temp-15}, we have
 \[
 \begin{array}{llll}
   F(D^2u)-f(\widehat a_J)&=& I_1+I_2\\
   &\leq & \displaystyle \dfrac{V}{1+(a_1U')^2}
   \left(
   \underline \Psi(\theta_1-(1+J))+\overline \Psi(\theta_2+\cdots+\theta_n)
   \right)\\
   &\leq &  \displaystyle \dfrac{V}{(a_1U')^2}
   \left(-(1+J)
   \underline \Psi+\overline \Psi
   \right).\\
 \end{array}
 \]
 Thus
 \[
 F(D^2u)\leq f(\widehat a_J)\quad\text{as long as}\quad J\geq \frac{\overline\Psi}{\underline\Psi}-1.
 \]
 This is provided by the following computation and the choice of $J$ in \eqref{equ-temp-16},
 \[
 \begin{array}{llll}
 \dfrac{\overline \Psi(U',sU'')}{\underline \Psi(U',sU'')}-1
 &=&  \displaystyle \dfrac{(\sqrt{1+(a_1U')^2}+4a_nsU'')^2\cdot (a_1U'+(a_1U')^3+4a_nsU'')}{(1+(a_1U')^2)^2\cdot a_1U'}-1\\
 &\leq & J(U',(s+1)U'').\\
 \end{array}
 \]
 The desired result $F(D^2u)\leq f(\bar a_J)$ follows from the fact that \eqref{equ-temp-17} and $
 f(\widehat a_J)\leq f(\bar a_J).$ The monotonicity and regularity of $J$ can be proved by the choice of $J(w,H)$ as in \eqref{equ-temp-16}.
\end{proof}

Next, to obtain an ODE satisfied by $U'$, we apply the implicit function theorem as in Lemma \ref{lem-ImplicitFuncResult}, but with additional $J$ terms.
Similar to the definition of $\underline w$ in previous section, by the monotonicity of $\arctan$ function, there exists a unique  increasing positive function $\overline w(s)$ defined on $[0,+\infty)$ determined by
\begin{equation}\label{equ-def-baru}
f(a_1\overline w(s),\cdots,a_n\overline w(s))=\underline g(s),
\end{equation}
where $\underline g$  is as in \eqref{equ-temp-3}.
\begin{lemma}\label{lem-ImplicitFuncResult-supersol}
  There exists a unique nonnegative smooth function $H(s,w)$ satisfying
  \[
  f(a_1w+2a_n(1+J(w,H))H,a_2w+2a_nJ(w,H)H,\cdots,a_nw+2a_nJ(w,H)H)=\underline g(s),
  \]
  in the set
  \[
  \left\{(s,w)~|~s\geq 0,~0<w\leq \overline w(s)\right\}.
  \]
  Especially, $H(s,w)$ is   monotone increasing with respect to $s$,
  \begin{equation}\label{equ-ZeroBarrier}
  H(s,w)=0\quad\text{if and only if}\quad w=\overline w(s)\quad\text{and}\quad
  \dfrac{\partial H}{\partial w}(\infty,1)=-M(A).
  \end{equation}
  Furthermore, there exists $C>0$ such that
  \begin{equation}\label{equ-Property-H-super}
  |H(s,w)-H(\infty,w)|\leq Cs^{-\frac{\beta}{2}},\quad\forall~s>0.
  \end{equation}
\end{lemma}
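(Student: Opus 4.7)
The plan is to define
\[
\Phi(s,w,H) := f\bigl(a_1w + 2a_n(1+J(w,H))H,\, a_2w + 2a_nJ(w,H)H,\, \ldots,\, a_nw + 2a_nJ(w,H)H\bigr) - \underline g(s)
\]
and obtain $H(s,w)$ by solving $\Phi(s,w,H)=0$ implicitly for $H\geq 0$. The strategy parallels the proof of Lemma \ref{lem-ImplicitFuncResult}, with the extra $J$-dependent terms controlled by the properties of $J$ from Lemma \ref{lem-supsolu-estimate-refine}, namely $J\geq 0$, $J$ monotone nondecreasing in its second entry, and $J(w,0)=0$.

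First I would establish pointwise existence, uniqueness, and the zero-set statement. Since $J(w,0)=0$, we have $\Phi(s,w,0)=f(a_1w,\ldots,a_nw)-\underline g(s)$, which by the defining relation \eqref{equ-def-baru} of $\overline w$ and the monotonicity of $f$ is nonpositive on the domain $\{0<w\leq \overline w(s)\}$ and vanishes iff $w=\overline w(s)$. Because the first argument of $f$ is strictly increasing in $H$ (its $H$-derivative is $2a_n(1+J+HJ_H)\geq 2a_n$) and the remaining arguments are nondecreasing in $H$, the map $H\mapsto\Phi(s,w,H)$ is strictly increasing, with limit $\tfrac{n\pi}{2}-\underline g(s) > 0$ as $H\to\infty$. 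The intermediate value theorem produces a unique $H(s,w)\geq 0$ with $\Phi=0$, and $H(s,w)=0$ iff $w=\overline w(s)$, which gives the zero-set portion of \eqref{equ-ZeroBarrier}. Smoothness of $H$ follows from the implicit function theorem since $\partial_H\Phi>0$, and monotonicity in $s$ follows by differentiating $\Phi=0$ in $s$ together with the monotonicity of $\underline g$ chosen in \eqref{equ-temp-3}.

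To compute $\partial_w H(\infty,1)$, I would evaluate the partials of $\Phi$ at $(s,w,H)=(\infty,1,0)$, noting that $\overline w(\infty)=1$ and $J(1,0)=0$ so that every contribution involving a derivative of $J$ carries a factor of $H=0$ and drops out. This yields $\partial_w\Phi = \sum_{i=1}^n\frac{a_i}{1+a_i^2}$ and $\partial_H\Phi = \frac{2a_n}{1+a_1^2}$, so the implicit function theorem combined with the formula for $M(A)$ recorded in \eqref{equ-temp-19} gives $\partial_w H(\infty,1)=-M(A)$. For the asymptotic estimate \eqref{equ-Property-H-super}, I would subtract the identities $\Phi(s,w,H(s,w))=0$ and $\Phi(\infty,w,H(\infty,w))=0$ and apply the mean value theorem; since $\partial_H\Phi$ admits a uniform positive lower bound on any set where $H$ remains bounded (which holds uniformly for $w$ on compact subsets of $(0,\overline w(\infty)]$), one obtains $|H(s,w)-H(\infty,w)|\leq C|\underline g(s)-g(\infty)|\leq Cs^{-\beta/2}$.

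The main technical point, and what distinguishes the argument from Lemma \ref{lem-ImplicitFuncResult}, is to confirm that the coupling of $J$ with $H$ in the arguments of $f$ destroys neither the strict monotonicity of $\Phi$ in $H$ nor the clean first-order derivative structure at $H=0$. Both are secured by $J(w,0)=0$ and $J_H\geq 0$, which come from the explicit formula \eqref{equ-temp-16}. These features are precisely what allow the derivative computation at $(\infty,1,0)$ to reproduce the value $-M(A)$ already obtained in Lemma \ref{lem-ImplicitFuncResult}.
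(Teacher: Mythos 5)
Your proposal is correct and follows essentially the same route as the paper: define $H$ implicitly via the intermediate value theorem and strict monotonicity of the operator in $H$ (both secured by $J(w,0)=0$ and $J_H\geq 0$), get smoothness and the sign of $\partial_s H$ from the implicit function theorem and $\underline g'>0$, read off $\partial_w H(\infty,1)=-M(A)$ by differentiating the implicit relation at $H=0$ where the $J$-terms drop out, and obtain \eqref{equ-Property-H-super} by comparing the identities at $s$ and $s=\infty$. The only place the paper is slightly more explicit is in checking that the $H\to\infty$ limit equals $\tfrac{n\pi}{2}$ (it uses $\inf_{0<w\le\overline w(0)}J(w,1)>0$ rather than your tacit appeal to $J(w,H)H\to\infty$) and in producing the uniform bound $2a_n(1+J)H\le C$ that underlies the mean value step, but both are immediate from \eqref{equ-temp-16} and the boundedness of $H$, so your argument is sound.
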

\begin{proof}
  On the one hand, for all $s\geq 0$ and  $0<w\leq\overline w(s)$,
  \[
  \begin{array}{lll}
  &\displaystyle \lim_{H\rightarrow 0}f(a_1w+2a_n(1+J)H,a_2w+2a_nJH,\cdots,a_nw+2a_nJH)\\
  =&\displaystyle
  f(a_1w,a_2w,\cdots,a_nw)\\
  \leq &\underline g(s).
  \end{array}
  \]
  On the other hand, from the definition of $J$ and $\overline w(s)$,
  \[
  \epsilon:=\inf\{J(w,1)~|~0<w\leq\overline w(0)\}>0.
  \]
  Hence by the monotonicity of $J(w,H)$ with respect to $H$, for all $s\geq 0$ and $0<w\leq \overline w(s)$,
  \begin{equation}\label{equ-temp-18}
  \begin{array}{lllll}
  &\displaystyle \lim_{H\rightarrow+\infty}f(a_1w+2a_n(1+J)H,a_2w+2a_nJH,\cdots,a_nw+2a_nJH)\\
  >&\displaystyle \lim_{H\rightarrow+\infty}f(a_1w+2a_n(1+\epsilon)H,a_2w+2a_n\epsilon H,\cdots,a_nw+2a_n\epsilon H)\\
  =&f(\infty,\infty,\cdots,\infty)\\
  =&
  \frac{n\pi}{2}>\underline g(s).
  \end{array}
  \end{equation}
  Thus  by the mean value theorem and the implicit function theorem,  there exists a unique $H(s,w)\geq 0$ such that the equality holds and it is a smooth, bounded function with respect to $s$ and $w$.

  Furthermore,  taking partial derivative with respect to $s$, we  have
  \[
  \begin{array}{lllll}
  0<\underline g'(s)&=&\displaystyle
  \dfrac{\partial f}{\partial\lambda_1}(\tilde\lambda)\cdot 2a_n \left(1+J(w,H)+H\dfrac{\partial J}{\partial H}(w,H)\right)\cdot \dfrac{\partial H}{\partial s}\\
  &&\displaystyle
  +\sum_{i=2}^n \dfrac{\partial f}{\partial \lambda_i}(\tilde\lambda)
  \cdot 2a_n\left(J(w,H)+H\dfrac{\partial J}{\partial H}(w,H)\right)\cdot  \dfrac{\partial H}{\partial s},\\
  \end{array}
  \]
  where
  \[
  \tilde \lambda:=\left(a_1w+2a_n(1+J(w,H))H,a_2w+2a_nJ(w,H)H,\cdots,a_nw+2a_nJ(w,H)H\right).
  \]
  Hence $H(s,w)$ is monotone increasing with respect to $s$.

  By the monotonicity of $\arctan$ and \eqref{equ-def-baru}, when $w(s)=\overline w(s)$, we have the following two equalities
  \[
  \begin{array}{llll}
    \underline g&=& f(a_1\overline w,a_2\overline w,\cdots,a_n\overline w),\\
    \underline g&=& f(a_1\overline w+2a_n(1+J(\overline w,H))H,a_2\overline w+2a_nJ(\overline w,H)H,\cdots,a_n\overline w+2a_nJ(\overline w,H)H).
  \end{array}
  \]
  Since $2a_n(1+J(\overline w,H))H\geq 0$ and $2a_nJ(\overline w,H)H\geq 0$, the only possibility to make the equalities above hold is $H(s,\overline w)=0$. It is clearly, $H(s,w)>0$ when $w<\overline w(s)$.

  Next, we compute the partial derivative of $H$ with respect to $w$ at $(\infty,1)$.
  By taking partial derivative with respect to $w$, we have
  \[
  \begin{array}{lllll}
    0&=&\displaystyle \dfrac{\partial f}{\partial\lambda_1}(\tilde\lambda)\cdot \left(a_1+2a_n\left(1+J(w,H)\right)\dfrac{\partial H}{\partial w}+2a_nH\left(\dfrac{\partial J}{\partial w}+\dfrac{\partial J}{\partial H}\cdot \dfrac{\partial H}{\partial w}\right)\right)\\
    &&\displaystyle+\sum_{i=2}^n\dfrac{\partial f}{\partial\lambda_i}(\tilde\lambda)
    \cdot \left(a_i+2a_nJ(w,H)\dfrac{\partial H}{\partial w}+2a_nH\left(\dfrac{\partial J}{\partial w}+\dfrac{\partial J}{\partial H}\cdot \dfrac{\partial H}{\partial w}\right)\right).\\
  \end{array}
  \]
  From the definition  \eqref{equ-temp-16}  of $J$ in Lemma \ref{lem-supsolu-estimate-refine}, we have
  \[
  \lim_{(s,w)\rightarrow (\infty,1)}H(s,w)=0,
  \lim_{(s,w)\rightarrow (\infty,1)}J(w,H)=0,
  \quad\text{and}\quad
  \lim_{(s,w)\rightarrow (\infty,1)}\tilde\lambda=(a_1,a_2,\cdots,a_n).
  \]
  Consequently, by the smoothness of $J$ and $H$, sending $(s,w)$ to $(\infty,1)$, we have
  \[
  0=
    \dfrac{1}{1+a_1^2}\cdot\left(a_1+2a_n\dfrac{\partial H}{\partial w}(\infty,1)\right)+\sum_{i=2}^n\dfrac{a_i}{1+a_i^2},
  \]
  i.e.,
  \[
  \dfrac{\partial H}{\partial w}(\infty,1)=-\dfrac{1+a_1^2}{2a_n}\sum_{i=1}^n\dfrac{a_i}{1+a_i^2}=-M(A).
  \]
  This finishes the proof of \eqref{equ-ZeroBarrier}.

  It remains to prove \eqref{equ-Property-H-super}. Since $H(s,w)$ is bounded, monotone increasing with respect to $s$ and $J(w,H)$ is bounded, monotone increasing with respect to $H$, we have
  \[
  \begin{array}{lllll}
    & (1+J(w,H(\infty,w)))H(\infty,w)-
    (1+J(w,H(s,w)))H(s,w)\\
  =& (1+J(w,H(\infty,w)))H(\infty,w)-
    (1+J(w,H(s,w)))H(\infty,w)\\
    &+\left((1+J(w,H(s,w)))H(\infty,w)-
    (1+J(w,H(s,w)))H(s,w)\right)\\
  \geq & H(\infty,w)-H(s,w),
\end{array}
  \]
  and  there exists $C>0$ such that $
  2a_n(1+J)H\leq C$.
  Hence by the Newton--Leibnitiz formula and the monotonicity,
  \[
  \begin{array}{lllll}
  & g(\infty)-\underline g(s)\\
  \geq & \displaystyle
     \arctan (a_1w+2a_n(1+J(w,H(\infty,w)))H(\infty,w)) \\
    &\displaystyle  -
    \arctan (a_1w+2a_n(1+J(w,H(s,w)))H(s,w))
     \\
    \geq & \displaystyle \dfrac{2a_n}{1+(a_1+C)^2}\left((1+J(w,H(\infty,w)))H(\infty,w)-
    (1+J(w,H(s,w)))H(s,w)
    \right)\\
    \geq & \dfrac{2a_n}{1+(a_1+C)^2}(H(\infty,w)-H(s,w)).
  \end{array}
  \]
  This finishes the proof of this lemma.
\end{proof}

To proceed, we prove that by solving an ordinary differential equation, there exists a supersolution to \eqref{equ-Lagrangian} in $\mathbb R^n\setminus\{0\}$.
\begin{corollary}\label{Coro-temp2}
  Let $u$, $U$, $A$, and $H$ be as in Lemmas \ref{lem-supsolu-estimate}, \ref{lem-supsolu-estimate-refine}, and \ref{lem-ImplicitFuncResult-supersol}. If
  \[
  0<U'(s)\leq\overline w(s),\quad U''(s)\geq 0\quad\text{and}\quad U''(s)=\dfrac{H(s,U'(s))}{s+1}\quad\text{in }s>0,
  \]
  then $u$ is a supersolution to \eqref{equ-Lagrangian} in $\mathbb R^n\setminus\{0\}$.
\end{corollary}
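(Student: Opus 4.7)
The plan is to combine Lemma \ref{lem-supsolu-estimate-refine} with the defining equation of $H$ from Lemma \ref{lem-ImplicitFuncResult-supersol}. Since the hypotheses $U'>0$ and $U''\geq 0$ of Lemma \ref{lem-supsolu-estimate-refine} are built into the statement of the corollary, I would first apply that lemma pointwise at every $s=s(x)>0$ to obtain
\[
F(D^2 u(x))\leq f(\bar a_J),\qquad \bar a_J=\bigl(a_1U'+(1+J)\cdot 2a_n s U'',\,a_2U'+J\cdot 2a_n s U'',\,\ldots,\,a_nU'+J\cdot 2a_n s U''\bigr),
\]
with $J=J(U',(s+1)U'')$. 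This reduces the desired pointwise upper bound on $F(D^2u)$ to an expression depending only on $s$, $U'(s)$ and $U''(s)$.

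Next, I would substitute the prescribed ODE $U''(s)=H(s,U'(s))/(s+1)$, which gives $(s+1)U''=H(s,U')$ and $sU''=\tfrac{s}{s+1}H(s,U')$. The first identity shows $J=J(U',H(s,U'))$, precisely the form of $J$ that enters the defining relation of $H$ in Lemma \ref{lem-ImplicitFuncResult-supersol}. Since $\tfrac{s}{s+1}\in(0,1]$ and $1+J\geq J\geq 0$, each component of $\bar a_J$ is no larger than the corresponding component of
\[
\tilde a:=\bigl(a_1U'+2a_n(1+J)H(s,U'),\;a_2U'+2a_nJ\,H(s,U'),\;\ldots,\;a_nU'+2a_nJ\,H(s,U')\bigr),
\]
and the coordinatewise monotonicity of $f=\sum\arctan\lambda_i$ yields $f(\bar a_J)\leq f(\tilde a)$. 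By the defining relation of $H$ in Lemma \ref{lem-ImplicitFuncResult-supersol} (applicable because the hypothesis $0<U'(s)\leq\overline w(s)$ keeps $(s,U'(s))$ inside the domain of $H$), the right-hand side equals $\underline g(s(x))$, and \eqref{equ-temp-3} then gives $\underline g(s(x))\leq g(x)$. Chaining these inequalities produces $F(D^2u(x))\leq g(x)$ on $\mathbb R^n\setminus\{0\}$, i.e.\ the classical (hence viscosity) supersolution inequality.

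I do not foresee a substantive obstacle; the argument is a bookkeeping chain once the two key lemmas are in hand. The only mildly delicate point is that, in Lemma \ref{lem-supsolu-estimate-refine}, the second argument of $J$ carries the factor $(s+1)U''$ while the eigenvalue perturbations in $\bar a_J$ carry the factor $sU''$. Writing the ODE with denominator $s+1$ rather than $s$ is exactly what reconciles these two expressions (via $sU''\leq(s+1)U''$ and monotonicity of $f$) and simultaneously avoids a singularity at $s=0$, which will be important when extending $\overline u$ across the origin in the next step of the construction.
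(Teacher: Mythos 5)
Your argument is correct and matches the paper's proof of Corollary \ref{Coro-temp2}: both apply Lemma \ref{lem-supsolu-estimate-refine} pointwise, use $sU''\le (s+1)U''=H(s,U')$ together with the nonnegativity of $J$ and coordinatewise monotonicity of $f$ to pass to the vector appearing in the defining relation of $H$, and then invoke $\underline g(s(x))\le g(x)$ from \eqref{equ-temp-3}. Your closing remark about the role of the $s+1$ denominator (reconciling the $(s+1)U''$ argument of $J$ with the $sU''$ factors in $\bar a_J$ while avoiding the singularity at the origin) is exactly the point the paper's construction is built around.
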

\begin{proof}
  By the equation satisfied by $U''$, we have
  \[
  sU''\leq(s+1)U''=H(s,U'),\quad\forall~s>0.
  \]
  Hence by Lemmas \ref{lem-supsolu-estimate-refine} and \ref{lem-ImplicitFuncResult-supersol},
  \[
  \begin{array}{llll}
    F(D^2u)&\leq & f(a_1U'+2a_n(1+J)sU'',a_2U'
    +2a_nJsU'',\cdots,a_nU'+2a_nJsU'')\\
    &\leq & f(a_1U'+2a_n(1+J)H,a_2U'+2a_nJH,\cdots,a_nU'+2a_nJH)\\
    &=& \underline g(s),\\
  \end{array}
  \]
  for all $s>0$.
  This proves $u$ is a generalized symmetric supersolution to \eqref{equ-Lagrangian} in $\mathbb R^n\setminus\{0\}$.
\end{proof}

Similar to the proof of Lemmas \ref{lem-subsolution-Existence} and \ref{lem-ODEsolution-subsol},  we have  the following existence result of  initial value problem of ODE.

\begin{lemma}\label{lem-ODEsolution-supersol}
  For any $w_0\in (0,\overline w(0))$, there exists a unique solution $W$ to
  \begin{equation}\label{equ-system-Dirich-supersol}
  \left\{
  \begin{array}{llllll}
    \displaystyle \dfrac{\mathrm{d} w}{\mathrm{d} s}=\dfrac{H(s,w)}{s+1}, & \text{in } s>0,\\
    w(0)=w_0.\\
  \end{array}
  \right.
  \end{equation}
  Furthermore,
  \[
  W'(s)\geq 0,\quad \overline w(s)> W(s)>w_0,\quad\forall~s\in(0,+\infty),
  \]
  and $W(s)$ has the  asymptotic behavior at infinity as in \eqref{equ-temp-14}.
\end{lemma}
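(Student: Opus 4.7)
The plan is to follow the blueprint of the proof of Lemma \ref{lem-ODEsolution-subsol} almost verbatim, with the role of the decreasing lower barrier $\underline w$ replaced by the increasing upper barrier $\overline w$ from Lemma \ref{lem-ImplicitFuncResult-supersol}. Since $H(s,w)$ is smooth on $\{(s,w):0<w\leq\overline w(s)\}$ and $w_0<\overline w(0)$, some closed rectangle about $(0,w_0)$ sits inside this region, and on it the right hand side $H(s,w)/(s+1)$ is Lipschitz in $w$. The Picard--Lindel\"of theorem then produces a unique local $C^1$ solution $W$ to \eqref{equ-system-Dirich-supersol} near $s=0$. From $H\geq 0$ we read off $W'\geq 0$, and hence $W(s)>w_0$ wherever $W$ is defined.

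To extend globally while keeping $W$ strictly below $\overline w$, suppose by contradiction that $s_0>0$ is the first point with $W(s_0)=\overline w(s_0)$, so $(\overline w-W)(s)\geq 0$ on $[0,s_0]$ and vanishes at $s_0$. This forces $W'(s_0^-)\geq \overline w'(s_0)>0$, since $\overline w$ is strictly increasing. But \eqref{equ-ZeroBarrier} gives $H(s_0,\overline w(s_0))=0$, so the ODE yields $W'(s_0)=0$, a contradiction. Therefore $w_0<W(s)<\overline w(s)\leq 1$ as long as $W$ exists, and the Carath\'eodory extension theorem produces a global solution on $[0,\infty)$. This barrier step, which depends crucially on the strict monotonicity of $\overline w$ together with the vanishing of $H$ on the graph $\{w=\overline w\}$, is the main place where the argument departs from the subsolution case and is the step I expect to be the most delicate.

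It remains to derive the asymptotics. Since $W$ is monotone nondecreasing and bounded above by $1$, the limit $W(\infty)\in(w_0,1]$ exists. If $W(\infty)<1$, then using the monotonicity of $H$ (increasing in $s$ by Lemma \ref{lem-ImplicitFuncResult-supersol}; decreasing in $w$, since $H(s,\cdot)>0$ on $w<\overline w(s)$ and vanishes at $w=\overline w(s)$) we obtain $H(s,W(s))\geq H(s_1,W(\infty))>0$ for all $s\geq s_1$ sufficiently large, hence $W'(s)\geq c/(s+1)$, contradicting the boundedness of $W$. Thus $W(\infty)=1$. Setting $t:=\ln(s+1)$ and $\varphi(t):=W(s(t))-1\leq 0$, a direct calculation combining $H(\infty,1)=0$, $\partial_w H(\infty,1)=-M(A)$ from \eqref{equ-ZeroBarrier}, and the decay bound \eqref{equ-Property-H-super} yields
\[
\varphi'(t)=-M(A)\,\varphi+O(e^{-\beta t/2})+O(\varphi^2)\quad\text{as }t\to\infty,
\]
which is precisely equation \eqref{equ-temp-12}. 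The iterated Gronwall bootstrap used in Lemma \ref{lem-ODEsolution-subsol} --- multiply by $e^{(M(A)-\epsilon)t}$, integrate, and feed the resulting crude estimate back into the $O(\varphi^2)$ remainder --- transfers word for word (the sign of $\varphi$ is irrelevant once one works with $|\varphi|$), and delivers the decay rate stated in \eqref{equ-temp-14}.
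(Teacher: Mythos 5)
Your proof follows the paper's blueprint closely and is correct in its structure: local existence via Picard--Lindel\"of, the barrier argument at a first touching point with $\overline w$ (using $\overline w'>0$ together with $H(s_0,\overline w(s_0))=0$), global extension via Carath\'eodory, and the change of variables $t=\ln(s+1)$ with the same iterated Gronwall bootstrap. The barrier step you flagged as delicate is handled correctly and is indeed the same comparison as in Lemma~\ref{lem-ODEsolution-subsol}, with the roles of decreasing/increreasing reversed.

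There is one genuine gap, in the step establishing $W(\infty)=1$. You assert that $H(s,\cdot)$ is monotone decreasing in $w$ and justify this by ``$H(s,\cdot)>0$ on $w<\overline w(s)$ and vanishes at $w=\overline w(s)$.'' That inference is invalid: a function can be positive on an interval and vanish at the right endpoint without being monotone. Moreover, the paper nowhere proves monotonicity of $H$ in $w$; it only computes the single value $\partial_w H(\infty,1)=-M(A)$. Indeed, from the implicit-function computation in Lemma~\ref{lem-ImplicitFuncResult-supersol} one finds that the sign of $\partial_w H$ is governed by $\sum_i f_{\lambda_i}\bigl(a_i+2a_nH\,\partial_w J\bigr)$, and since $\partial_w J<0$ for $H>0$ this expression is not obviously of one sign away from $H=0$; so the claim is not merely unproved but unclear. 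The paper's own argument avoids this entirely: it shows $\liminf_{s\to\infty}H(s,W(s))=0$ by noting that otherwise $W'(s)\ge\epsilon/(s+1)$ for large $s$, which integrates to $+\infty$ and contradicts boundedness; it then extracts a subsequence $s_j$ with $H(s_j,W(s_j))\to 0$ and passes to the limit in the defining identity to get $g(\infty)=\sum_i\arctan(a_iW(\infty))$, forcing $W(\infty)=1$. Alternatively, you could use continuity plus the uniform estimate \eqref{equ-Property-H-super}: $|H(s,W(s))-H(\infty,W(\infty))|\le Cs^{-\beta/2}+|H(\infty,W(s))-H(\infty,W(\infty))|\to 0$, so $H(s,W(s))\to H(\infty,W(\infty))>0$ if $W(\infty)<1$, yielding the same divergent integral. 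Either repair is short; the monotonicity-in-$w$ route should be abandoned.
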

\begin{proof}
  Since $0<w_0<\overline w(0)$, there exist $s_0,\epsilon>0$ such that
  \[
  [0,s_0]\times\left[w_0-\epsilon,w_0+\epsilon\right]\subset\{(s,w)~|~s\geq 0, 0<w\leq \overline w(s)\}.
  \]
  Since $H(s,w)$ is a nonnegative smooth function, the right hand side term $\frac{H(s,w)}{s+1}$ is Lipschitz in the rectangle $[0,s_0]\times\left[
  w_0-\epsilon,w_0+\epsilon
  \right]$. By the Picard--Lindel\"of theorem, the initial value problem \eqref{equ-system-Dirich-supersol} admits locally a unique solution $W$ near $s=0$ and we shall prove that the solution can be extended to $s\in[0,+\infty)$.

  By \eqref{equ-ZeroBarrier} and the fact that $H$ is nonnegative, following the argument as in the proof of Lemma \ref{lem-ODEsolution-subsol}, we can prove that $W(s)$ cannot touch $\overline w(s)$ from below. Consequently, $W(s)$ is monotone increasing and $w_0\leq W(s)<\overline w(s)$ holds as long as $W(s)$ exists. By the Carath\'eodory extension theorem of ODE, the solution $W$ exists on entire $[0,+\infty).$

  Now we prove the asymptotic behavior of $W$ near infinity. Since $W$ is positive, monotone increasing and bounded from above by $\overline w(s)\leq 1$, $W$ admits a finite limit $0<W(\infty)\leq 1$ at infinity.
  Especially, we claim that $\liminf_{s\rightarrow\infty}H(s,W(s))=0$, which implies that there exists a subsequence $\{s_j\}_{j=1}^{\infty}$ such that
  \[
  \lim_{j\rightarrow \infty}s_j=+\infty\quad\text{and}\quad
  \lim_{j\rightarrow \infty}H(s_j,W(s_j))=0.
  \]
  Arguing by contradiction, we suppose  $2\epsilon:=\liminf_{s\rightarrow\infty}H(s,W(s))>0$. Then there exists $\bar s$ sufficiently large such that
  \[
  \dfrac{H(s,W(s))}{s+1}>\dfrac{\epsilon}{s+1},\quad\forall~s\geq\bar s.
  \]
  This contradicts to the Newton--Leibnitiz formula
  \[
  W(\infty)-W(\bar s)=\int_{\bar s}^{+\infty}\dfrac{H(r,W(r))}{r+1}\mathrm dr>\epsilon\int_{\bar s}^{+\infty}\dfrac{1}{r+1}\mathrm dr=+\infty,
  \]
  and proves the existence of the subsequence $\{s_j\}$.
  Furthermore, we claim that $W(\infty)=1$. Arguing by contradiction, if $W(\infty)<1$, then by  Lemma \ref{lem-ImplicitFuncResult-supersol} and sending $j\rightarrow\infty$,
  \[
  \begin{array}{lllll}
    g(\infty)&=&\displaystyle \lim_{j\rightarrow\infty}\underline g(s_j)\\
    &=&\displaystyle \arctan (a_1W(\infty)+2a_n(1+J(W(\infty),0))\cdot 0)\\
    &&\displaystyle+\sum_{i=2}^n\arctan (a_iW(\infty)+2a_nJ(W(\infty),0)\cdot 0)\\
    &=&\displaystyle \sum_{i=1}^n\arctan(a_iW(\infty))\\
    &<& g(\infty),
  \end{array}
  \]
  which is a contradiction. Next, we refine the asymptotic behavior by setting
  \[
  t:=\ln (s+1)\in (0,+\infty)\quad\text{and}\quad\varphi(t):=W(s(t))-1.
  \]
  By a direct computation, for all $t\in (0,+\infty)$,
  \[
  \begin{array}{llll}
    \varphi'(t)&=& W'(s(t))\cdot e^t\\
    &=& H(s(t),\varphi+1)\\
    &=& \underbrace{H(s(t),\varphi+1)-H(\infty,\varphi+1)}_{=:H_1(t,\varphi)}
    +\underbrace{H(\infty,\varphi+1)}_{=:H_2(\varphi)}.\\
  \end{array}
  \]
  Furthermore, by property \eqref{equ-Property-H-super} of $H(s,w)$ in Lemma \ref{lem-ImplicitFuncResult-supersol},   there exist $C>0$ such that for all $t\geq 1, |\varphi|\ll 1$,
  \[
  |H_1(t,\varphi)|\leq Ce^{-\frac{\beta}{2}t},\quad\text{and}\quad
  \left|H_2(\varphi)+M(A)\varphi\right|\leq C\varphi^2.
  \]
  Then the asymptotic behavior of $W$ follows from the asymptotic stability theory of ODE, which is identical to the proof in Lemma \ref{lem-ODEsolution-subsol}. This finishes the proof of this lemma.
\end{proof}
\begin{proof}[Proof of Lemma \ref{lem-supersolution-Existence}]
Take  any $0<w_0<\overline w(0)$ and $W$ to be the solution to \eqref{equ-system-Dirich-supersol} from Lemma \ref{lem-ODEsolution-supersol}. Let
 \[
 \overline u(x):=\overline U(s(x)):=\int_0^{s}W(r)\mathrm dr+C,
 \]
 where $C$ is a constant to be determined.

 By Corollary \ref{Coro-temp2},  $\overline u$ is a supersolution in $\mathbb R^n\setminus\{0\}$. By Lemma \ref{lem-ODEsolution-supersol} and the condition that $M(A)>1$ and $\beta>2$, we may choose
 \[
 C:=\lim_{s\rightarrow\infty}\int_0^{s}(1-W(r))\mathrm dr.
 \]
 This makes $\overline u$ satisfies the desired asymptotic behavior at infinity. Furthermore, we also have
 \[
 D\overline u(x)=\overline U'(s(x))\cdot Ax\rightarrow 0,\quad\text{as }x\rightarrow 0.
 \]
 Hence by Theorem 1.1 in \cite{Caffarelli-Li-Nirenberg--ViscositySol-III}, the function $\overline u$ is a supersolution to \eqref{equ-Lagrangian} in $\mathbb R^n$ in viscosity sense.
\end{proof}

\section{Proof of Theorem \ref{thm-n>=3}}\label{sec-ProofThm1}

Since $g>\frac{(n-2)\pi}{2}$ and $g\in C^{m}(\mathbb R^n)$ for some $m\geq 2$, by the existence result as in  Bhattacharya \cite{Bhattacharya-Dirichlet-LagMeanCurva} and
Lu \cite{Lu-Dirichlet-Lagrangian}, there exists a unique classical solution $u_s$  that solves
\begin{equation}\label{equ-Dirichlet-Existence}
\left\{
\begin{array}{llll}
  F(D^2u_s)=g(x), & \text{in }D_s,\\
  u_s=s, & \text{on }\partial D_s.\\
\end{array}
\right.
\end{equation}
Hereinafter, $D_s$ denotes the  ellipsoid $
D_s:=\left\{x\in\mathbb R^n~|~s(x)=\frac{1}{2}x^TAx\leq s\right\}.
$
Especially, $u_s\in C^{m+1,\alpha}(\overline D_s)$ for any $0<\alpha<1$. Now we provide a uniform bounds of $u_s$ as below.

\begin{lemma}\label{Lem-UniformBds-C0} Let $u_s$ be the solutions as above.
  There exists a positive constant $C_1$  independent of $s$ such that
  \[
  \left|u_s(x)-\frac{1}{2}x^TAx\right|\leq C_1\quad\text{in }D_s.
  \]
\end{lemma}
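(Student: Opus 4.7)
The plan is to sandwich $u_s$ between two appropriately shifted copies of the barrier functions constructed in Lemmas \ref{lem-subsolution-Existence} and \ref{lem-supersolution-Existence}, and then invoke the viscosity comparison principle for \eqref{equ-Lagrangian}. Write $Q(x):=\tfrac12 x^TAx$. The asymptotic expansion \eqref{equ-temp-2}, together with $M(A)>1$ and $\beta>2$, guarantees that $\underline u-Q$ and $\overline u-Q$ are continuous on $\mathbb R^n$ and vanish at infinity, so there exists a constant $K>0$, independent of $s$, with
\[
|\underline u(x)-Q(x)|\le K\quad\text{and}\quad |\overline u(x)-Q(x)|\le K\quad\text{on }\mathbb R^n.
\]
Because $\underline u$ and $\overline u$ are generalized symmetric, they take the constant values $\underline U(s)$ and $\overline U(s)$ on the level set $\partial D_s$, while $Q\equiv s$ there; in particular $|\underline U(s)-s|\le K$ and $|\overline U(s)-s|\le K$.

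Next I would consider the shifted barriers
\[
\tilde{\underline u}(x):=\underline u(x)+\bigl(s-\underline U(s)\bigr),\qquad
\tilde{\overline u}(x):=\overline u(x)+\bigl(s-\overline U(s)\bigr).
\]
Adding constants does not change the Hessian, so in view of \eqref{equ-temp-3} these remain, respectively, a viscosity subsolution and a viscosity supersolution of \eqref{equ-Lagrangian} on $\mathbb R^n$, and by construction both agree with the boundary datum $u_s\equiv s$ on $\partial D_s$. Since $|g|>\tfrac{(n-2)\pi}{2}$ places us in the supercritical range, where $F$ is concave and degenerate elliptic, the standard viscosity comparison principle applies on $D_s$ and yields
\[
\tilde{\underline u}(x)\le u_s(x)\le \tilde{\overline u}(x)\qquad\text{in }D_s.
\]
Combining this sandwich with $|\underline u-Q|\le K$, $|\overline u-Q|\le K$ and $|s-\underline U(s)|,|s-\overline U(s)|\le K$ gives $|u_s(x)-Q(x)|\le 2K$ on $D_s$, so the lemma follows with $C_1:=2K$, independent of $s$.

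The only delicate point I anticipate is that $\underline u$ and $\overline u$ are classical solutions only on $\mathbb R^n\setminus\{0\}$, so one must justify applying comparison across the possible singularity at the origin. This is, however, already handled at the ends of Sections \ref{sec-ConstrucSubSol} and \ref{sec-ConstrucSupSol}: since $D\underline u(x), D\overline u(x)\to 0$ as $x\to 0$, Theorem 1.1 in \cite{Caffarelli-Li-Nirenberg--ViscositySol-III} promotes $\underline u$ and $\overline u$ to genuine viscosity sub/super-solutions on all of $\mathbb R^n$, so no extra argument near the origin is needed and the above comparison step goes through verbatim.
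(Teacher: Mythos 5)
Your proof is correct and takes essentially the same route as the paper: shift the entire sub- and supersolutions from Lemmas \ref{lem-subsolution-Existence} and \ref{lem-supersolution-Existence} by constants, use their $O(1)$ deviation from $Q$ (valid since $M(A)>1$, $\beta>2$), and apply the viscosity comparison principle on $D_s$. The only cosmetic difference is that you choose $s$-dependent shifts $s-\underline U(s)$, $s-\overline U(s)$ so the barriers match the boundary datum exactly, whereas the paper uses $s$-independent shifts $\beta_\pm$ making the barriers straddle $Q$ globally; both yield the same uniform bound.
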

\begin{proof}
  Let $\underline u$ and $\overline u$ be the subsolution and supersolution to \eqref{equ-Lagrangian} in $\mathbb R^n$ as in Lemmas \ref{lem-subsolution-Existence} and \ref{lem-supersolution-Existence} respectively.
  Then
  \[
  \beta_-:=\inf_{\mathbb R^n}\left(\frac{1}{2}x^TAx-\underline u(x)\right)\quad\text{and}\quad
  \beta_+:=\sup_{\mathbb R^n}\left(\frac{1}{2}x^TAx-\overline u(x)\right)
  \]
  satisfies
  \[
  -\infty<\beta_-<0<\beta_+<+\infty
  \]
  and consequently
  \[
  \underline u(x)+\beta_-\leq \frac{1}{2}x^TAx\leq\overline u(x)+\beta_+\quad\text{in }\mathbb R^n.
  \]
  Since $\underline u+\beta_-$ and $\overline u+\beta_+$  are viscosity subsolution and supersolution to \eqref{equ-Lagrangian} in $\mathbb R^n$ respectively, by comparison principle as Theorem 3.3 in \cite{User'sGuide-ViscositySol} we have
  \[
  \underline u(x)+\beta_-\leq u_s(x)\leq\overline u(x)+\beta_+\quad\text{in }D_s
  \]
  for any $s>0$.
  This proves the desired result.
\end{proof}

The following gradient and Hessian estimates  by  Bhattacharya--Monney--Shankar \cite{Bhattacharya-Monney-Shankar-GradientEsti-Critical} will be used to provide locally uniform bounds of $u_s$ in $C^{m+1,\alpha}_{loc}(\mathbb R^n)$ with $0<\alpha<1$.
\begin{theorem}\label{thm-GradientEstimate}
  Let  $u$ be a $C^4$ solution to \eqref{equ-Lagrangian}
  in $B_{R}$,
  where $g\in C^{2}(B_{R})$ satisfies $|g|\geq \frac{(n-2)\pi}{2}$. Then
  \[
  |Du(0)|\leq \frac{C}{R}\left(1+R^4(\underset{B_R}{\mathtt{osc}}u)^2\right)
  \]
  and
  \[
  |D^2u(0)|\leq C\exp\left(\dfrac{C}{R^{2n-2}}\max_{B_R}|Du|^{2n-2}\right),
  \]
  where hereinafter $B_r(x)$ denote the ball centered at $x$ with radius $r$, $B_r:=B_r(0)$, and the constant  $C$ relies only on $n$ and $||g||_{C^2(B_R)}$.
\end{theorem}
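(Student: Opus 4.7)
The plan is to prove the stated gradient and Hessian estimates separately, both exploiting the concavity of $F(M):=\sum_i\arctan\lambda_i(M)$ and the structural inequality $\sum_i F^{ii}(M)=\sum_i(1+\lambda_i^2)^{-1}>0$, which plays the role of a (degenerate) uniform ellipticity in the supercritical regime $|g|\geq(n-2)\pi/2$.

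For the gradient estimate, I would run a cutoff Bernstein argument. Differentiating \eqref{equ-Lagrangian} in direction $e_k$ yields the linearized equation $F^{ij}D_{ij}u_k=D_k g$ with $F^{ij}:=\partial F/\partial M_{ij}$ positive-definite. Introduce an auxiliary function
$v(x):=\eta(x)^{2}\bigl(1+|Du(x)|^{2}\bigr)\exp\bigl(A(u(x)-\inf_{B_R}u)\bigr),$
where $\eta$ is a standard cutoff supported in $B_R$ equal to $1$ on $B_{R/2}$ and $A>0$ is a large constant to be chosen. At an interior maximum $x_0$ of $v$, the conditions $Dv(x_0)=0$ and $F^{ij}D_{ij}v(x_0)\leq 0$, combined with the linearized equation, produce after Young-type absorption (using the supercritical lower bound on $\sum_i F^{ii}$) an upper bound for $|Du(x_0)|$ of the stated form; standard rescaling then completes the gradient step.

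For the Hessian estimate, I would pass to the Lagrangian gradient graph $\Sigma:=\{(x,Du(x)):x\in B_R\}\subset\mathbb R^{2n}$, equipped with the induced metric $g_{ij}=\delta_{ij}+\sum_k u_{ki}u_{kj}$, whose mean curvature vector is the $J$-rotate of $Dg$. Letting $b:=\log\lambda_{\max}(D^{2}u)$, a Codazzi--Simons-type computation combined with the concavity inequality $F^{ij,kl}\eta_{ij}\eta_{kl}\leq 0$ (valid in the supercritical range) yields a subharmonic-type inequality on $\Sigma$, schematically
$\Delta_\Sigma b\geq -C\bigl(1+|Dg|^{2}_{\Sigma}+|D^{2}g|_{\Sigma}\bigr).$
Plugging $b$ into the Michael--Simon Sobolev inequality on $\Sigma$ and running a Moser iteration on $b_+$ produces $\sup_{\Sigma\cap B_{R/2}} b\leq C\bigl(1+R^{-n}\|b_+\|_{L^{1}(\Sigma\cap B_R)}\bigr)^{1/q}$ for some $q>0$. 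Bounding the $L^{1}$ norm by $\mathrm{vol}(\Sigma\cap B_R)\leq CR^{n}(1+\max_{B_R}|Du|)^{n}$ and exponentiating then gives the claimed double-exponential Hessian bound.

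The main obstacle is the Hessian step: establishing the subharmonic-type inequality for $b$ on $\Sigma$ requires a delicate commutator calculation in which concavity of $F$ precisely cancels a term that would otherwise carry the wrong sign, and this cancellation holds only in the supercritical regime. The gradient step, in contrast, is a comparatively routine Bernstein argument once the supercritical lower bound on $\sum_i F^{ii}$ is available; an integral-method alternative in the style of Warren--Yuan would also work.
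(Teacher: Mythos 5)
The paper does not prove this theorem; it is quoted as an external result of Bhattacharya--Mooney--Shankar \cite{Bhattacharya-Monney-Shankar-GradientEsti-Critical}, so there is no in-paper argument to compare your sketch against. That said, your Hessian-step outline (pass to the Lagrangian graph, obtain a Jacobi-type inequality for $b=\log\lambda_{\max}$ using the concavity structure available for $|g|\geq\frac{(n-2)\pi}{2}$, then apply the Michael--Simon Sobolev inequality and Moser iteration, with the graph volume controlled by $\max|Du|$) is in fact the strategy used in the cited source, going back to Warren--Yuan for constant phase; note only that the stated Hessian bound is a single exponential in $\max_{B_R}|Du|^{2n-2}$, not a double exponential.

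Your gradient step has a genuine gap. You invoke a ``supercritical lower bound on $\sum_i F^{ii}$'', but no quantitative lower bound is available: $\sum_i F^{ii}=\sum_i (1+\lambda_i^2)^{-1}$ degenerates to $0$ as eigenvalues of $D^2u$ blow up, and the supercritical phase condition controls signs and concavity of the operator, not non-degeneracy of the linearization. A pointwise Bernstein argument that cannot absorb bad terms into $\sum_i F^{ii}$ does not close. Moreover, a cutoff argument with $\eta^2(1+|Du|^2)\exp\bigl(A(u-\inf_{B_R}u)\bigr)$ would leave a multiplicative factor $\exp\bigl(A\,\underset{B_R}{\mathtt{osc}}\,u\bigr)$, which is exponential in the oscillation, whereas the target bound $\frac{C}{R}\bigl(1+R^4(\underset{B_R}{\mathtt{osc}}u)^2\bigr)$ is quadratic. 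The Bhattacharya--Mooney--Shankar gradient estimate is proved, like their Hessian estimate, by an integral method on the Lagrangian graph: a function of $|Du|$ is shown to be a subsolution of the graph Laplacian, the Michael--Simon mean value inequality is applied, and the resulting integral is bounded via integration by parts against the oscillation of $u$; this is precisely what yields the polynomial, rather than exponential, dependence on $\underset{B_R}{\mathtt{osc}}\,u$.
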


\begin{lemma}\label{lem-uniformBds-Cm}
  For any $0<\alpha<1$ and any bounded subset $K\subset\mathbb R^n$, there exists $C>0$ such that
  \[
  ||u_s||_{C^{m+1,\alpha}(K)}\leq C,\quad\forall~s>0.
  \]
\end{lemma}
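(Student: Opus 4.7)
The plan is to combine the uniform $C^0$ control from Lemma \ref{Lem-UniformBds-C0} with the interior gradient and Hessian estimates of Theorem \ref{thm-GradientEstimate}, and then bootstrap through Evans--Krylov and Schauder theory. Fix a bounded set $K \subset \mathbb R^n$ and choose $R > 0$ together with nested neighborhoods $K \subset K^{(1)} \subset K^{(2)} \subset K^{(3)}$ where $K^{(j)} := \{x : \mathrm{dist}(x,K) \leq jR\}$. For $s$ large enough that $K^{(3)} \subset D_s$ (for smaller $s$ the inequality is vacuous on $K$, so one may absorb finitely many such $s$ into the constant), Lemma \ref{Lem-UniformBds-C0} combined with the boundedness of the quadratic $\frac{1}{2}x^TAx$ on $K^{(3)}$ yields $\|u_s\|_{L^\infty(K^{(3)})} \leq C_0$ uniformly in $s$.

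Next, I would apply the first inequality of Theorem \ref{thm-GradientEstimate} at each point of $K^{(2)}$ using the ball of radius $R$ (which lies inside $K^{(3)}$); since the oscillation is bounded by $2C_0$, this gives $\sup_{K^{(2)}}|Du_s| \leq C_1$ uniformly. Feeding this gradient bound into the second inequality of Theorem \ref{thm-GradientEstimate}, applied on balls of radius $R$ centered at points of $K^{(1)}$, produces a uniform Hessian bound $\sup_{K^{(1)}}|D^2 u_s| \leq C_2$. In particular, all eigenvalues $\lambda_i(D^2 u_s)$ on $K^{(1)}$ lie in a fixed compact subinterval of $\mathbb R$.

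With the Hessian confined to a fixed compact range and $|g| > \frac{(n-2)\pi}{2}$ by assumption \eqref{equ-cond-g}, the operator $F$ is both uniformly elliptic and concave along the family $\{D^2 u_s\}$; the concavity in the supercritical phase is exactly the property invoked in the introduction (cf. \cite{Yuan-GlobalSolution-SPL}). The Evans--Krylov theorem then delivers a uniform $C^{2,\alpha}(K)$ estimate for any $0 < \alpha < 1$. Differentiating the equation once yields a linear uniformly elliptic equation in $C^\alpha$ coefficients satisfied by $\partial_e u_s$, so Schauder estimates promote the bound to $C^{3,\alpha}(K)$; iterating this linearization argument $m-1$ additional times, using $g \in C^m(\mathbb R^n)$, gives the desired uniform estimate $\|u_s\|_{C^{m+1,\alpha}(K)} \leq C$.

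The main obstacle is the uniform Hessian bound: the Bhattacharya--Mooney--Shankar Hessian estimate is exponential in $\max_{B_R}|Du|^{2n-2}$, so it is crucial that $R$ is chosen to depend only on $K$ (not on $s$) and that the preceding gradient bound $C_1$ is independent of $s$. Both are guaranteed once $K^{(3)} \subset D_s$, which holds for all sufficiently large $s$ by monotone exhaustion of $\mathbb R^n$ by the ellipsoids $D_s$. Everything downstream is then standard.
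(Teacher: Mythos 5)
Your argument matches the paper's proof essentially step for step: uniform $C^0$ control from Lemma \ref{Lem-UniformBds-C0}, the gradient then Hessian bounds from Theorem \ref{thm-GradientEstimate} on suitably nested neighborhoods of $K$, and the Evans--Krylov plus Schauder bootstrap exploiting concavity of $F$ in the supercritical regime. The only cosmetic difference is your explicit $K^{(1)}\subset K^{(2)}\subset K^{(3)}$ scaffolding versus the paper's single choice of $D_S\supset\supset K$ with $R=\tfrac14\,\mathrm{dist}(K,\partial D_S)$; also note that the set of $s$ with $K^{(3)}\not\subset D_s$ is an interval, not a finite set, but this is harmless since the estimate is only needed (and in the paper only proved) for $s$ beyond the threshold at which $K\subset\subset D_s$.
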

\begin{proof}
  By Lemma \ref{Lem-UniformBds-C0}, we have proved that $u_s$ are locally uniformly bounded in $C^0(\mathbb R^n)$. It remains to  obtain estimates for higher order derivatives.

  For any bounded smooth set $K\subset\mathbb R^n$, we take a sufficiently large $S$ such that $K\subset\subset D_S$ and set $R:=\frac{1}{4}\mathrm{dist}(K,\partial D_S)$. By Lemma \ref{Lem-UniformBds-C0},
  \[
  \left|u_s(x)-\frac{1}{2}x^TAx\right|\leq C_1\quad\text{in } D_S,\quad\forall~s>S.
  \]
  Especially, we have uniform $C^0$-bound
  \[
  |u_s(x)|\leq S+C_1\quad\text{in }D_S,\quad\forall~s>S.
  \]
  For any $x$ satisfying $\mathrm{dist}(x,K)<2R$, we apply the first inequality in  Theorem \ref{thm-GradientEstimate} in $B_{2R}(x)\subset D_S$ and obtain
  \[
  |Du_s(x)|\leq \dfrac{C}{R}\left(1+4R^4(S+C_1)^2\right),\quad\forall~s>S,
  \]
  where $C$ relies only on $||g||_{C^1(D_S)}$ and $n$. This proves the locally uniform $C^1$-bounds of $u_s$.

  Then for any $x\in K$, we apply the second inequality in  Theorem \ref{thm-GradientEstimate} in $B_R(x)$ and obtain
  \[
  |D^2u_s(x)| \leq C\exp\left(\dfrac{C^{2n-1}}{R^{2n-2}}
  \left(1+4R^4(S+C_1)^2\right)^{2n-2}\right),\quad\forall~s>S.
  \]
  This proves the locally uniform $C^2$-bounds of $u_s$.

  Especially, $F$ is uniformly elliptic with respect to $u_s$ in $K$. Furthermore, since $g>\frac{(n-2)\pi}{2}$, Yuan \cite{Yuan-GlobalSolution-SPL} proved that $F$ is a concave operator to $u$ in this case. Hence for any $3\leq k\leq m+1$,  $0<\alpha<1$ and $K'\subset\subset K$, by the Evans--Krylov estimates and the Schauder theory as Theorems 6.2 and 17.14 in \cite{Book-Gilbarg-Trudinger},
  \[
  ||u_s||_{C^{k,\alpha}(K')}\leq C
  \]
  for some constant $C$ independent of $s$.
\end{proof}

The estimates above enable us to obtain a limit function $u_{\infty}$, which is a classical solution to \eqref{equ-Lagrangian} in $\mathbb R^n$. Still, we need the following asymptotic behavior result to prove Theorem \ref{thm-n>=3}.

\begin{lemma}\label{lem-AsymptoticBehavior}
  Let $n\geq 3$, $u$ be a smooth solution to \eqref{equ-Lagrangian} in $\mathbb R^n$ and $g\in C^{m}(\mathbb R^n)$ satisfy
  \begin{equation}\label{equ-RHScond-temp}
  |g|\geq \frac{(n-2)\pi}{2}\quad\text{and}\quad
  g(x)=g(\infty)+O_m(|x|^{-\beta})\quad\text{as }|x|\rightarrow\infty,
  \end{equation}
  where $m\geq 2$ and $\beta>2$. If there exists a positive symmetric matrix $A$ with $F(A)=g(\infty)$ such that
  \begin{equation}\label{equ-VanishingCondition-Rough}
  u(x)-\frac{1}{2}x^TAx=o(|x|^2)\quad\text{as }|x|\rightarrow\infty.
  \end{equation}
  Then there exist $b\in\mathbb R^n$  and $c\in\mathbb R$ such that
  \eqref{equ-system-Dirich} holds.
  \begin{equation}\label{equ-asymptotic-Refine}
  u-\left(\frac{1}{2}x^TAx+b x+c\right)=\left\{
  \begin{array}{lllll}
    O_{m+1}(|x|^{2-\min\{\beta,n\}}), & \text{if }\beta\neq n,\\
    O_{m+1}(|x|^{2-n}(\ln|x|)), & \text{if }\beta=n.\\
  \end{array}
  \right.
  \end{equation}
\end{lemma}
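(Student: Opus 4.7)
The plan is to first upgrade the $o(|x|^2)$ hypothesis to a full $C^{m+1}$ convergence $D^k u(x) \to D^k(\tfrac{1}{2}x^T A x)$ at infinity via a blow-down, then linearize the equation around the background quadratic and invoke the established asymptotic expansion theory for linear elliptic equations whose coefficients converge to constants and whose right-hand side decays at a controlled rate.

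For the first step, for each large $R>0$ set $\tilde u_R(y):=R^{-2}u(Ry)$ on $B_2$. Then $\tilde u_R$ solves $F(D^2\tilde u_R)=\tilde g_R(y)$ with $\tilde g_R(y):=g(Ry)$, and by \eqref{equ-RHScond-temp} we have $\|\tilde g_R\|_{C^m(B_2)}\le C$ together with $\tilde g_R\to g(\infty)$ locally uniformly. The hypothesis \eqref{equ-VanishingCondition-Rough} forces $\tilde u_R\to \tfrac{1}{2}y^T A y$ uniformly on $B_2$. Applying Theorem \ref{thm-GradientEstimate} on $B_1$ (the gradient then Hessian estimates of Bhattacharya--Mooney--Shankar apply here because $|g|\ge \tfrac{(n-2)\pi}{2}$), followed by the Evans--Krylov theorem and Schauder theory as in the proof of Lemma \ref{lem-uniformBds-Cm}, one obtains uniform $C^{m+1,\alpha}(B_1)$ bounds for $\tilde u_R$. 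Since the only possible limit is $\tfrac{1}{2}y^T A y$, one concludes $\tilde u_R\to \tfrac{1}{2}y^T A y$ in $C^{m+1}(B_1)$, which translates to $D^k\bigl(u-\tfrac{1}{2}x^T A x\bigr)(x)=o(|x|^{2-k})$ for $0\le k\le m+1$. In particular $D^2 u(x)\to A$ at infinity.

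Next I would linearize. Writing $v(x):=u(x)-\tfrac{1}{2}x^T A x$, the fundamental theorem of calculus yields
\[
a^{ij}(x)D_{ij}v(x)=g(x)-g(\infty),\qquad a^{ij}(x):=\int_0^1 F_{ij}\bigl(A+tD^2v(x)\bigr)\,dt.
\]
By the previous step $a^{ij}(x)\to F_{ij}(A)$ as $|x|\to\infty$, the operator is uniformly elliptic on $\mathbb R^n$ (because $D^2u$ is globally bounded and $F$ is uniformly elliptic on bounded sets), and $a^{ij}\in C^{m-1}$ with $a^{ij}-F_{ij}(A)$ decaying at the same rate as $D^2v$. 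After a linear change of coordinates diagonalizing the positive definite matrix $F_{ij}(A)$, the equation takes the form $\Delta v + b^{ij}(x)D_{ij}v = h(x)$ with $b^{ij}(x)=o(1)$ and $h(x)=O_m(|x|^{-\beta})$ thanks to \eqref{equ-RHScond-temp}.

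Finally, \eqref{equ-asymptotic-Refine} follows from the asymptotic expansion theory for exterior solutions of such perturbed Laplace-type equations, as developed in Caffarelli--Li \cite{Caffarelli-Li-ExtensionJCP}, Bao--Li--Zhang \cite{Bao-Li-Zhang-ExteriorBerns-MA}, Jia \cite{Jia-Xiaobiao-AsymGeneralFully}, and Liu--Bao \cite{Liu-Bao-2021-Expansion-LagMeanC,Liu-Bao-2020-ExpansionSPL}, whose hypotheses match the structure obtained above. The decay rates $|x|^{2-\min\{\beta,n\}}$, respectively $|x|^{2-n}\ln|x|$ at the resonance $\beta=n$, are exactly the decay rates of the Newtonian potential of a source of size $O(|x|^{-\beta})$ in $\mathbb R^n$ with $n\ge 3$, while the linear term $bx$ and constant $c$ are the two lowest-order harmonic terms appearing in the expansion. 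The main obstacle is quantifying the decay of $D^2v$: the Step~1 bound gives only $D^2v=o(1)$, which in turn only yields $b^{ij}=o(1)$. To invoke the sharp expansion results one needs a polynomial rate for $D^2v$, obtained by a bootstrap: plug the qualitative decay into the linear equation, use Schauder-type decay estimates and the Newtonian-potential analysis to obtain an initial polynomial rate for $v$ (and hence for $D^2v$ through the rescaled Schauder estimates of Step~1), reinsert it into the coefficients $b^{ij}$, and iterate until the rates saturate at those prescribed by \eqref{equ-asymptotic-Refine}. Uniqueness of $b$ and $c$ and the precise identification of these constants come from integration against the Newtonian potential and the first spherical harmonics.
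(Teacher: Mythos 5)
Your overall strategy---blow down to obtain $D^2u\to A$ at infinity, linearize via Newton--Leibniz, and invoke the asymptotic expansion theory---is the same as the paper's, but the specific blow-down you propose contains a genuine error. With the centered rescaling $\tilde u_R(y):=R^{-2}u(Ry)$, the Hessian satisfies $D^2\tilde u_R(y)=D^2u(Ry)$, so $D^2\tilde u_R(0)=D^2u(0)$ is a \emph{fixed} matrix for every $R$; unless $u$ happens to be exactly quadratic near the origin, this matrix is not $A$, and therefore $\tilde u_R$ cannot converge to $\frac{1}{2}y^TAy$ in $C^2(B_1)$, let alone in $C^{m+1}(B_1)$. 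A related problem is your claim that $\|\tilde g_R\|_{C^m(B_2)}\le C$: since $D^k\tilde g_R(y)=R^k(D^kg)(Ry)$, for $y$ near $0$ the factor $(D^kg)(Ry)$ is merely $O(1)$ (the point $Ry$ stays in a bounded set, where the decay in \eqref{equ-RHScond-temp} gives no smallness), so $\|D^k\tilde g_R\|_{L^\infty(B_{1/R})}\sim R^k$ blows up. Consequently the constants in Theorem \ref{thm-GradientEstimate} are \emph{not} uniform in $R$ on $B_1$, and the step ``$\tilde u_R\to \frac{1}{2}y^TAy$ in $C^{m+1}(B_1)$'' does not go through.

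The fix, and what the paper actually does, is to rescale about the far-away base point rather than the origin: for $R:=|x|\gg1$ set $u_R(y):=(4/R)^2u(x+Ry/4)$ on $B_2$, which corresponds to the annulus $\{R/2<|z|<3R/2\}$. There the decay condition \eqref{equ-RHScond-temp} yields $\|g_R-g(\infty)\|_{C^m(B_2)}\le CR^{-\beta}$, Theorem \ref{thm-GradientEstimate} gives $R$-uniform $C^2$ bounds, and Evans--Krylov plus Schauder applied to the linearized equation $a_{ij}^R D_{ij}w_R=g_R-g(\infty)$ gives $|D^2w_R(0)|=o(1)$, i.e.\ $D^2u(x)\to A$ as $|x|\to\infty$. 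Once this is established, Theorem 1.1 of \cite{Jia-Xiaobiao-AsymGeneralFully} can be invoked directly---that result needs only the qualitative $o(1)$ convergence of the Hessian, so the bootstrap you sketch to upgrade $D^2v=o(1)$ to a polynomial rate, while in spirit correct, is re-deriving part of what that theorem already supplies.
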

\begin{proof}
  Let
  \[
  w(x):=u(x)-\frac{1}{2}x^TAx.
  \]
  For sufficiently large $R:=|x|>1$, set
  \[
  u_R(y):=\left(\dfrac{4}{R}\right)^2u\left(x+\frac{R}{4}y\right)\quad\text{and}\quad
  w_R(y):=\left(\dfrac{4}{R}\right)^2w\left(x+\frac{R}{4}y\right)
  \quad\text{in }B_2.
  \]
  Then
  \[
  F(D^2u_R(y))=F\left(D^2u\left(x+\frac{R}{4}y\right)\right)=g\left(x+\frac{R}{4}y\right)=:g_R(y).
  \]
  By condition \eqref{equ-VanishingCondition-Rough},
  \[
  \max_{y\in \overline B_2}|u_R(y)|\leq \dfrac{16}{R^2}\max_{z\in \overline{B_{\frac{3R}{2}}\setminus B_{\frac{R}{2}}}}|u(z)|\leq C
  \]
  for some $C>0$ independent of $R$ and
  \[
  ||w_R||_{L^{\infty}(B_2)}\leq \dfrac{16}{R^2}\left\Vert
  u-\frac{1}{2}x^TAx
  \right\Vert_{L^{\infty}(B_{\frac{3R}{2}}\setminus B_{\frac{R}{2}})}=o(1)
  \]
  as $R\rightarrow\infty$.
  By a direct computation and condition \eqref{equ-RHScond-temp},
  \[
  ||g_R-g(\infty)||_{C^{m}(B_2)}\leq CR^{-\beta}
  \]
  for some positive constant $C$ independent of $R$.
  Hence by the gradient estimate and Hessian estimate as in Theorem \ref{thm-GradientEstimate}, we have $C>0$ independent of $R$ such that
  \[
  ||u_R||_{C^2(B_1)}\leq C.
  \]
  Consequently $F$ is uniformly elliptic with respect to all $u_R$ and concave in level set sense \cite{Yuan-GlobalSolution-SPL}. By the Evans--Krylov estimate and the Schauder theory again, for any $0<\alpha<1$, we have
  \begin{equation}\label{equ-regularity-temp}
  ||u_R||_{C^{2,\alpha}(B_{\frac{1}{2}})}\leq C\quad\text{and hence}\quad
  ||w_R||_{C^{2,\alpha}(B_{\frac{1}{2}})}\leq C
  \end{equation}
  for some $C>0$ independent of $R$. Notice that by the Newton--Leibnitz formula, $w_R$ satisfies
  \[
  a_{ij}^R(y)D_{ij}w_R(y)=g_R(y)-g(\infty)\quad\text{in }B_2,
  \]
  where
  \[
  a_{ij}^R(y)=\int_0^1D_{M_{ij}}F(A+tD^2w_R(y))\mathrm dt,
  \]
  and
  $D_{M_{ij}}F$ denotes partial derivative of $F(M)$ with respect to the $M_{ij}$-component.
  By  \eqref{equ-regularity-temp}, $w_R$ satisfies a uniformly elliptic equation with $C^{\alpha}$-regular coefficients. Thus by the Schauder theory again, there exists $C>0$ independent of $R$ such that
  \[
  |D^2w_R(0)|\leq C\left(
  ||w_R||_{L^{\infty}(B_1)}+||g_R-g(\infty)||_{C^{1}(B_1)}
  \right)=o(1)
  \]
  as $R\rightarrow\infty$. It follows that
  \[
  |D^2u(x)-A|=|D^2w(x)|=|D^2w_R(0)|=o(1),
  \]
  as $|x|\rightarrow\infty$.
  By Theorem 1.1 in \cite{Jia-Xiaobiao-AsymGeneralFully}, we have the desired result immediately.
\end{proof}

\begin{remark}
  In the original statement of Theorem 1.1 in \cite{Jia-Xiaobiao-AsymGeneralFully}, the asymptotic behavior results were stated as below
  \[
  u-\left(\frac{1}{2}x^TAx+b x+c\right)=\left\{
  \begin{array}{lllll}
    O_{m+1}(|x|^{2-\beta}), & \text{if }\beta<n,\\
    O_{m+1}(|x|^{2-s}),~\text{ for all }s\in (2,n), & \text{if }\beta=n,\\
    O_{m+1}(|x|^{2-n}), & \text{if }\beta>n.\\
  \end{array}
  \right.
  \]
  However this can be improved further into \eqref{equ-system-Dirich} by replacing the barrier function $|x|^{2-\frac{s+n}{2}}$ when $\beta=n$ in Step 2.2 of \cite{Jia-Xiaobiao-AsymGeneralFully} into $|x|^{2-n}(\ln|x|)$. See also the discussions and results in \cite{Liu-Bao-2021-Expansion-LagMeanC} etc.
\end{remark}

\begin{proof}[Proof of Theorem \ref{thm-n>=3}]
Let $u_s$ be the solution to Dirichlet problem \eqref{equ-Dirichlet-Existence} as in Lemma \ref{Lem-UniformBds-C0}. Then by Lemma
\ref{lem-uniformBds-Cm}, the Arzela--Ascoli theorem and the diagonal argument, there exist a subsequence $\{s_i\}_{i=1}^{\infty}$ and $u_{\infty}\in C^{m+1}_{loc}(\mathbb R^n)$ such that
\[
s_i\rightarrow\infty\quad\text{and}\quad
u_{s_i}\rightarrow u_{\infty}\quad\text{in }C_{loc}^{m+1}(\mathbb R^n)\quad\text{as }i\rightarrow\infty.
\]
In particular, $u_{\infty}$ is a classical solution to \eqref{equ-Lagrangian} in $\mathbb R^n$ and satisfies
\[
u_{\infty}(x)-\frac{1}{2}x^TAx=O(1)\quad\text{as }|x|\rightarrow\infty.
\]

Applying Lemma \ref{lem-AsymptoticBehavior} to $u_{\infty}$ obtained above, we have $c_{\infty}\in\mathbb R$ such that \[
u:=
u_{\infty}-c_{\infty}
=\frac{1}{2}x^TAx+\left\{
  \begin{array}{lllll}
    O_{m+1}(|x|^{2-\min\{\beta,n\}}), & \text{if }\beta\neq n,\\
    O_{m+1}(|x|^{2-n}(\ln|x|)), & \text{if }\beta=n,\\
  \end{array}
  \right.
\]
as $|x|\rightarrow\infty$. Then $u$ is an entire solution to  equation \eqref{equ-Lagrangian} satisfying  prescribed asymptotic behavior \eqref{equ-system-Dirich} at infinity with $b=0$ and $c=0$, and the regularity of $u$ follows from the Schauder theory. The uniqueness of $u$ follows from maximum principle as Theorem 17.1 in \cite{Book-Gilbarg-Trudinger}.
\end{proof}

\section{Proof of Theorem \ref{thm-Optimality}}\label{sec-ProofThm2}

In this section, we prove Theorem \ref{thm-Optimality}
by analyzing radially symmetric solutions and
arguing by contradiction.
Firstly, we consider radially symmetric $g$ and analyze the asymptotic behavior at infinity of radially symmetric solutions.
Secondly, by taking $A=\tan \frac{g(\infty)}{n} I$ and assuming there exists a classical solution $u$ to \eqref{equ-Lagrangian} satisfying asymptotic behavior \eqref{equ-ContraAssum}, we prove that $u$ is indeed a radially symmetric solution but the asymptotic behavior of  $u$ contradicts to the result just mentioned.

For any  $0<\beta\leq 2$ and $\frac{(n-2)\pi}{2}<G(\infty)<G(0)<\frac{n\pi}{2}$, let
\begin{equation}\label{equ-example-g}
G(r):=\left\{
\begin{array}{lllll}
  G(0), & \text{when }r\leq 1,\\
  G(\infty)+r^{-\beta}, & \text{when }r>2\max\left\{1, (G(0)-G(\infty))^{-\frac{1}{\beta}}\right\},\\
\end{array}
\right.
\end{equation}
be a smooth function on $[0,+\infty)$ that is monotone decreasing in $(1,+\infty)$.
Take $g\in C^{\infty}(\mathbb R^n)$ to be the radially symmetric function satisfying
\[
g(x)=G(r),\quad r:=|x|\quad\text{in }\mathbb R^n,
\]
which satisfies condition \eqref{equ-cond-g} for all $m\geq 0$ with
$g(0)=G(0)$
and $g(\infty)=G(\infty)$.

We start with classifying all radially symmetric classical solutions and analyzing their asymptotic behavior.
More explicitly, for  a radially symmetric function  $u(x)=U(r)$, we have
\[
\lambda(D^2u)=\left(U'',\frac{U'}{r},\cdots,\frac{U'}{r}\right).
\]
Hence if $u$
is a radially symmetric classical solution to \eqref{equ-Lagrangian} in $\mathbb R^n$, then  $W:=\frac{U'}{r}$ satisfies
\begin{equation}\label{equ-temp-20}
\arctan (W+rW')+(n-1)\arctan W=G(r)\quad \text{in }r>0.
\end{equation}

\begin{lemma}\label{lem-Def-h}
  There exists a unique  smooth function $h(r,w)$ satisfying
  \begin{equation}\label{equ-temp-5}
  \arctan(w+h(r,w))+(n-1)\arctan w=G(r)\quad\text{in }\left\{(r,w)~|~r\geq 0,~w>\tan\frac{G(r)-\frac{\pi}{2}}{n-1}\right\}.
  \end{equation}
  Especially,
  \begin{equation}\label{equ-Property-h-1}
  h(r,w)\equiv h(0,w)\quad\text{in }[0,1],\quad
  h\left(r,\tan\frac{G(r)}{n}\right)=0\quad\text{in }[0,+\infty),
  \end{equation}
  \begin{equation}\label{equ-Property-h-3}
  \dfrac{\partial h}{\partial w}\left(r,\tan\frac{G(r)}{n}\right)=-n\quad\text{in }[0,+\infty),
  \end{equation}
  and $h(r,w)$ is monotone decreasing with respect to $w$ and non-increasing with respect to $r$.
  Furthermore,  there exist $\delta, R, C>0$  such that
  \begin{equation}\label{equ-property-h-temp}
  r^{-\beta}\leq h(r,w)-h(\infty,w)\leq Cr^{-\beta}
  \end{equation}
  for all $(r,w)\in (R,+\infty)\times(\tan (\frac{G(\infty)}{n}-\delta),
  \tan(\frac{G(\infty)}{n}+\delta))$.
\end{lemma}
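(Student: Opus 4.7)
The plan is to apply the implicit function theorem to produce $h$, and then to establish the listed properties by implicit differentiation and mean value arguments. I would first check that on the set $\{(r,w):r\ge 0,\ w>\tan\frac{G(r)-\pi/2}{n-1}\}$ the map $h\mapsto \arctan(w+h)+(n-1)\arctan w$ is strictly increasing in $h$ and has range an open interval containing $G(r)$: the inequality $G(r)<\frac{\pi}{2}+(n-1)\arctan w$ is exactly the assumption on $w$, while the lower bound $-\frac{\pi}{2}+(n-1)\arctan w<G(r)$ follows from $(n-1)\arctan w<\frac{(n-1)\pi}{2}\le G(r)+\frac{\pi}{2}$ via $G>\frac{(n-2)\pi}{2}$. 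The partial derivative $\partial_h F=1/(1+(w+h)^2)>0$ on the whole domain, so the implicit function theorem yields a unique smooth solution $h(r,w)$.

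The identities in \eqref{equ-Property-h-1} are immediate: $G(r)\equiv G(0)$ on $[0,1]$ forces $h$ to be independent of $r$ there, and plugging $w=\tan(G(r)/n)$ into \eqref{equ-temp-5} gives $\arctan(w+h)=G(r)/n=\arctan w$, hence $h=0$. For \eqref{equ-Property-h-3}, implicit differentiation of \eqref{equ-temp-5} in $w$ yields
\[
\frac{1+\partial_w h}{1+(w+h)^2}+\frac{n-1}{1+w^2}=0,
\]
and evaluating along the zero curve where $w+h=w$ gives $\partial_w h=-n$. The same identity gives $\partial_w h=-1-(n-1)\frac{1+(w+h)^2}{1+w^2}<0$ throughout (monotone decrease in $w$), and differentiating \eqref{equ-temp-5} in $r$ gives $\partial_r h=G'(r)(1+(w+h)^2)\le 0$ since $G$ is non-increasing by construction \eqref{equ-example-g} (monotone non-increase in $r$).

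For \eqref{equ-property-h-temp} I would use the explicit form \eqref{equ-example-g}, which gives $G(r)-G(\infty)=r^{-\beta}$ for $r$ large. Subtracting the defining equation \eqref{equ-temp-5} at $r$ and at $\infty$ with the same $w$, and applying the mean value theorem to $\arctan$, gives
\[
r^{-\beta}=\arctan(w+h(r,w))-\arctan(w+h(\infty,w))=\frac{h(r,w)-h(\infty,w)}{1+(w+\xi)^2}
\]
for some $\xi$ between $h(r,w)$ and $h(\infty,w)$. Since $1+(w+\xi)^2\ge 1$, the lower bound $r^{-\beta}\le h(r,w)-h(\infty,w)$ is automatic. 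For the upper bound, continuity of $h$ together with $h(r,\tan(G(r)/n))=0$ ensures that on a rectangle $(R,\infty)\times(\tan(G(\infty)/n-\delta),\tan(G(\infty)/n+\delta))$ the values of $h(r,w)$ and $h(\infty,w)$ stay uniformly small, so $1+(w+\xi)^2\le C$ for a uniform constant.

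The main technical point will be the uniform bound on $1+(w+\xi)^2$ in this last step; it reduces to showing that the curve $w=\tan(G(r)/n)$ converges uniformly to $\tan(G(\infty)/n)$ as $r\to\infty$ (which follows from $G(r)\to G(\infty)$) and that $h$ is continuous up to $r=\infty$ along horizontal slices (which follows from the smoothness of $h$ and the asymptotic of $G$). Once $\delta$ is chosen small and $R$ large so the rectangle lies well inside the domain of definition of $h$, the uniform bound and hence \eqref{equ-property-h-temp} follow.
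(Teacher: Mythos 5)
Your proposal follows essentially the same approach as the paper: implicit function theorem for existence and uniqueness, direct substitution for the identities in \eqref{equ-Property-h-1}, implicit differentiation for \eqref{equ-Property-h-3} and monotonicity, and a mean-value argument combined with a uniform bound on $w+h$ over the rectangle for \eqref{equ-property-h-temp}. The only cosmetic difference is that the paper bounds $w+h(r,w)$ directly via the explicit formula $w+h(r,w)=\tan\big(G(r)-(n-1)\arctan w\big)$ after choosing $\delta$ small and $R$ large, whereas you argue that $h$ itself stays uniformly small near the curve $h=0$; both routes yield the needed uniform bound on $1+(w+\xi)^2$, so the argument is sound.
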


\begin{proof}
Notice that for all $r\geq 0$ and $w>\tan\frac{G(r)-\frac{\pi}{2}}{n-1}$, we have
\[
\lim_{h\rightarrow-\infty}\arctan(w+h)+(n-1)\arctan w<\frac{(n-2)\pi}{2}<G(r)
\]
and
\[
\lim_{h\rightarrow+\infty}\arctan(w+h)+(n-1)\arctan w
=\frac{\pi}{2}+(n-1)\arctan w>G(r).
\]
Hence by the mean value theorem and monotonicity of $\arctan$, there exists a unique function $h(r,w)$ such that \eqref{equ-temp-5} holds.
Especially, since
\[
G(r)\equiv G(0)\quad\text{in }[0,1]\quad\text{and}\quad
n\arctan \left(\tan\frac{G(r)}{n}\right)=G(r)\quad\text{in }[0,+\infty),
\]
two equalities in \eqref{equ-Property-h-1} follow from \eqref{equ-temp-5}.

  By the implicit function theorem,
  \[
  \dfrac{1}{1+(w+h)^2}\cdot \left(1+\frac{\partial h}{\partial w}\right)+\dfrac{n-1}{1+w^2}=0\quad\text{and}
  \quad
  \dfrac{1}{1+(w+h)^2}\cdot\dfrac{\partial h}{\partial r}=G'(r).
  \]
  Consequently, $h(r,w)$ is monotone decreasing with respect to $w$ and is monotone non-increasing with respect to $r$.
  Equality  \eqref{equ-Property-h-3} follows from the second equality in    \eqref{equ-Property-h-1} and the computation of partial derivative above.

  Eventually, we prove \eqref{equ-property-h-temp} by the Newton--Leibnitz formula.
  Choose sufficiently large $R$ and sufficiently small $\delta>0$ such that
  \[
  R>2\max\left\{1,(G(0)-G(\infty))^{-\frac{1}{\beta}}, \left(
  \frac{\pi}{2}-\frac{G(\infty)}{n}\right)^{-\frac{1}{\beta}}\right\}
  \]
  and
  \[
  \delta<\dfrac{1}{n-1}\min\left\{\frac{G(\infty)}{n},\frac{\pi}{2}-\frac{G(\infty)}{n}-R^{-\beta}\right\}.
  \]
  By   \eqref{equ-temp-5}, for all $(r,w)\in (R,+\infty)\times(\tan (\frac{G(\infty)}{n}-\delta),
  \tan(\frac{G(\infty)}{n}+\delta))$, we have
  \[
  \begin{array}{llll}
  w+h(r,w)&=&\tan\left(G(r)-(n-1)\arctan w\right)\\
  &>& \tan\left(G(\infty)-(n-1)\cdot\left(\frac{G(\infty)}{n}+\delta\right)\right)\\
  &>&0\\
  \end{array}
  \]
  and
  \[
  \begin{array}{llll}
    w+h(r,w) & =& \tan\left(G(r)-(n-1)\arctan w\right)\\
    &<& \tan\left(G(\infty)+r^{-\beta}-(n-1)\cdot\left(\frac{G(\infty)}{n}-\delta\right)\right)\\
    &<&\tan\left(\frac{G(\infty)}{n}+(n-1)\delta+R^{-\beta}\right).\\
  \end{array}
  \]
  Hence  for all $(r,w)$ in the range above,  $w+h(r,w)$
  is  bounded and
  there exists $C>0$ such that
  \[
  \begin{array}{llllll}
    r^{-\beta}&=&G(r)-G(\infty)\\
    &=& \arctan (w+h(r,w))-\arctan (w+h(\infty,w))\\
    &\geq&\frac{1}{C}\left(h(r,w)-h(\infty,w)\right),
  \end{array}
  \]
  and
  \[
  \begin{array}{llllll}
    r^{-\beta}&=&G(r)-G(\infty)\\
    &=& \arctan (w+h(r,w))-\arctan (w+h(\infty,w))\\
    &\leq&  h(r,w)-h(\infty,w).
  \end{array}
  \]
  This finishes the proof of this lemma.
\end{proof}

\begin{lemma}\label{lem-ODEsolution-radial}
Let $n\geq 3$, $0<\beta\leq 2$, $G(r)$ and $h(r,w)$ be the functions from Lemma \ref{lem-Def-h}.
  There exists a unique solution $W\in C^1([0,+\infty))$ to
  \begin{equation}\label{equ-temp-11}
  w'=\frac{h(r,w)}{r}\quad\text{in }r>0.
  \end{equation}
  Furthermore,
  \begin{equation}\label{equ-origin-quadratic}
  W'(r)\equiv 0\quad\text{and}\quad W(r)\equiv \tan\frac{G(0)}{n},\quad\forall~r\in[0,1],
  \end{equation}
  and there exist $C_1, C_2, R>0$ such that
  \[
  C_1 r^{-\beta}\leq  W(r)-\tan\frac{G(\infty)}{n} \leq C_2 r^{-\beta},\quad\forall~r>R.
  \]
\end{lemma}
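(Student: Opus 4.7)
The plan is to build $W$ in two pieces. On $[0,1]$ the right-hand side $h(r,w)/r$ is singular at $r=0$, but the first identity in \eqref{equ-Property-h-1} gives $h(r,w)\equiv h(0,w)$ on $[0,1]$ and the second gives $h(0,\tan(G(0)/n))=0$. Hence the constant function $W\equiv \tan(G(0)/n)$ solves the ODE there and is obviously $C^{1}$ at $r=0$; this already yields \eqref{equ-origin-quadratic}. For uniqueness near the origin I would use \eqref{equ-Property-h-3} to write $h(0,w)=-n(w-\tan(G(0)/n))+O((w-\tan(G(0)/n))^{2})$ and then separate variables in $rw'=h(0,w)$: a direct computation shows any other solution branch blows up like $r^{-n}$ as $r\to 0^{+}$, so only the constant extends to a bounded $C^{1}$ function at the origin.

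For $r\ge 1$, I would pose the initial value problem $w(1)=\tan(G(0)/n)$; since $h$ is smooth and $r\ge 1$ the Picard--Lindel\"of theorem provides a local solution. Global extension follows by a barrier argument copied from Lemma \ref{lem-ODEsolution-supersol}: because $h(r,\cdot)$ is monotone decreasing and $h(r,\tan(G(r)/n))=0$, the curve $r\mapsto \tan(G(r)/n)$ acts as a floor that $W$ cannot cross. Indeed at $r=1$ we start exactly on this curve, and for $r>1$ the floor decreases (since $G$ is strictly decreasing on $(1,\infty)$) while $W'(1)=0$, so immediately $W(r)>\tan(G(r)/n)$, forcing $h(r,W)<0$ and hence $W$ strictly decreasing. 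A crossing from above at some later $r_{0}$ would require $W'(r_{0})=0$ while $\tfrac{d}{dr}\tan(G(r)/n)\big|_{r=r_{0}}<0$, a contradiction as in the proof of Lemma \ref{lem-ODEsolution-subsol}. Carath\'eodory extension then gives a global solution confined to $[\tan(G(\infty)/n),\tan(G(0)/n)]$.

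Next I would show $W(\infty)=\tan(G(\infty)/n)$. Monotone convergence produces a limit $W(\infty)\ge \tan(G(\infty)/n)$. If the inequality were strict, monotonicity of $h(\infty,\cdot)$ and the fact that $h(\infty,\tan(G(\infty)/n))=0$ would give $h(r,W(r))\le -\varepsilon$ for all sufficiently large $r$, so $W'(r)\le -\varepsilon/r$ and $W(r)\to -\infty$, contradicting boundedness.

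The main work, and the step I expect to be the main obstacle, is the two-sided $r^{-\beta}$ bound. Set $V(r):=W(r)-\tan(G(\infty)/n)$, a positive decreasing function tending to $0$. Using the Newton--Leibniz formula, \eqref{equ-Property-h-3}, and the two-sided estimate \eqref{equ-property-h-temp}, one gets for $r$ large,
\begin{equation*}
r^{-\beta}-nV-C V^{2}\;\le\;h(r,\tan(G(\infty)/n)+V)\;\le\;Cr^{-\beta}-nV+CV^{2}.
\end{equation*}
Since $W$ satisfies $rV'=h(r,\tan(G(\infty)/n)+V)$, this yields the differential inequalities $rV'\le -nV+CV^{2}+Cr^{-\beta}$ and $rV'\ge -nV-CV^{2}+r^{-\beta}$. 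Because $\beta\le 2<n$, the linear equation $rZ'=-nZ+c\,r^{-\beta}$ has explicit solutions $Z=\tfrac{c}{n-\beta}r^{-\beta}+D r^{-n}$. A bootstrap then closes the loop: first use $V=o(1)$ to get a crude bound $V=O(r^{-\beta/2})$ that makes the $CV^{2}$ term absorbable into a small perturbation of the linear part, then reinsert into both inequalities via integrating-factor comparison to obtain $C_{1}r^{-\beta}\le V(r)\le C_{2}r^{-\beta}$. The delicate point is keeping the lower bound $r^{-\beta}$ on the forcing active after the $V^{2}$ error is absorbed; this is the same type of asymptotic-stability argument invoked from \cite{Book-Coddington-Levinson-ODE,Book-Bodine-Lutz-AsymptoticIntegration} in Lemmas \ref{lem-ODEsolution-subsol} and \ref{lem-ODEsolution-supersol}, only applied with matched two-sided forcing.
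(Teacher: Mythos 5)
Your overall route is the same as the paper's: a constant solution on $[0,1]$ from \eqref{equ-Property-h-1}, a Picard--Lindel\"of plus barrier argument to extend past $r=1$, monotonicity to identify $W(\infty)=\tan\tfrac{G(\infty)}{n}$, and a bootstrap on the linearized ODE (exponent $-n$ from \eqref{equ-Property-h-3}) against the forcing $r^{-\beta}$ from \eqref{equ-property-h-temp}. Two local comments.

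For uniqueness at the origin you linearize $h(0,w)$ and argue that nonconstant branches behave like $r^{-n}$ as $r\to 0^{+}$; the paper instead observes directly that if $W(\tfrac12)\neq\tan\tfrac{G(0)}{n}$ then $h(\tau,W(\tau))$ stays bounded away from $0$ on $(0,\tfrac12]$ by monotonicity, so $\int_0^{1/2}\tfrac{h(\tau,W(\tau))}{\tau}\,\mathrm d\tau$ diverges. Both work; the integral argument avoids any appeal to the Taylor remainder of $h$. Your non-crossing argument at the floor $\tan\tfrac{G(r)}{n}$ silently requires $G$ to be \emph{strictly} decreasing at the touching point; the paper avoids this by showing $r^{C}(W-\tan\tfrac{G(r)}{n})$ is monotone, which needs only $G'\le 0$. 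Since $G$ is ours to construct this is harmless, but worth flagging.

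The genuine soft spot is the intermediate claim that $V=o(1)$ yields $V=O(r^{-\beta/2})$ and that this suffices. If one only knows $V=O(r^{-\beta/2})$, then $V^{2}=O(r^{-\beta})$, which is the \emph{same} order as the forcing $r^{-\beta}$; the lower differential inequality $rV'\ge -nV + r^{-\beta} - CV^{2}$ then reads $rV'\ge -nV + (1-C')r^{-\beta}$ for some unquantified $C'$, and if $C'\ge 1$ the lower bound does not close. The clean way (Lemma \ref{lem-temp-3} in the paper) is to use $V=o(1)$ to absorb the quadratic into the linear term, $CV^{2}\le\tfrac12 V$, yielding $rV'\le -(n-\tfrac12)V + Cr^{-\beta}$; since $n\ge 3$ and $\beta\le 2$ we have $n-\tfrac12>\beta$, so integrating with factor $r^{\,n-1/2}$ already gives the \emph{sharp} first bound $V=O(r^{-\beta})$. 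Then $V^{2}=O(r^{-2\beta})$ is genuinely higher order, and both the upper and lower bounds in \eqref{equ-temp-21} follow from a single further integrating-factor pass with $r^{n}$. Replacing your $O(r^{-\beta/2})$ step with this argument repairs the gap; the rest of the proposal is correct.
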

\begin{proof}
Firstly, we prove the existence of solution. Since $h(r,w)$ is a smooth function, $\frac{h(r,w)}{r}$ is a locally Lipschitz function in $\left\{(r,w)~|~r> 0,~w>\tan\frac{G(r)-\frac{\pi}{2}}{n-1}\right\}$.
Since $W=\tan\frac{G(0)}{n}$ is a constant solution to the equation in $[0,1]$, we choose initial value as in
\eqref{equ-origin-quadratic} and only need to
prove that the solution exists on $[0,+\infty)$. By the monotonicity of $h(r,w)$ and the second equality in   \eqref{equ-Property-h-1}, we have
  \begin{equation}\label{equ-temp-23}
  W'(r)=\frac{h(r,W(r))}{r}
  \left\{
  \begin{array}{llll}
  <0,&\text{if }W(r)>\tan\frac{G(r)}{n},\\
  >0, & \text{if }W(r)<\tan\frac{G(r)}{n}.
  \end{array}
  \right.
  \end{equation}
By \eqref{equ-Property-h-1} and the monotonicity of $G$,  we have that $W^-:=\tan\frac{G(r)}{n}$ satisfies
\[
(W^-)'\leq 0=\dfrac{h(r,W^-)}{r}\quad\text{in }r>0.
\]
By the smoothness and monotonicity of $h(r,w)$,
for any $T\geq 1$ such that $W$ exists on $[0,T]$,
 there exists $C>0$ such that   $V:=W-W^-$ satisfies
\[
V'\geq \dfrac{h(r,W)}{r}-\dfrac{h(r,W^{-})}{r}=\dfrac{1}{r}\int_{W^-}^W\dfrac{\partial h}{\partial w}(r,w)\mathrm dw\geq  -C\dfrac{V}{r}
\]
in $(0,T)$.
Thus $r^CV(r)$ is a monotone non-decreasing function.
By the initial value $V(0)=0$, we have
\[
V\geq 0\quad\text{i.e.,}\quad W\geq W^-=\tan\frac{G(r)}{n}\quad\text{in }[0,T].
\]
Furthermore, by \eqref{equ-temp-23}, $W(r)$ is monotone non-increasing and hence
  \begin{equation}\label{equ-temp-25}
  \tan\frac{G(r)}{n}\leq W(r)\leq W(0)=\tan\frac{G(0)}{n}
  \end{equation}
in $[0,T]$.
By the Carath\'eodory extension theorem, $W$ may extend beyond $T$. Consequently $W$ exists and satisfies \eqref{equ-temp-25} on entire $[0,+\infty)$,  belongs to $C^1([0,\infty))$.

Secondly, we claim that if $W\in C^1([0,+\infty))$ is a solution to \eqref{equ-temp-11}, then $W(\frac{1}{2})=\tan\frac{G(0)}{n}$. Consequently the second equality in \eqref{equ-Property-h-1}  implies  \eqref{equ-origin-quadratic} and the uniqueness of solution follows immediately.  Arguing by contradiction, we  assume $W(\frac{1}{2})>\tan\frac{G(0)}{n}$ and the  case when $W(\frac{1}{2})<\tan\frac{G(0)}{n}$ follows similarly.
By the uniqueness result of initial value problem, we have $W(r)>\tan\frac{G(0)}{n}$ for all $r\in (0,1)$.
Consequently by \eqref{equ-temp-23}, $W(r)$ is monotone decreasing in $(0,1)$.  Hence by  the monotonicity of $h(r,w)$ and \eqref{equ-Property-h-1}, we can prove that
\[
W(r)\geq W(\frac{1}{2})\quad\text{and}\quad
h(r,W(r))\leq h(\frac{1}{2},W(\frac{1}{2}))=:-\epsilon<0\quad\text{in }(0,\frac{1}{2}].
\]
However, by the Newton--Leibnitz formula,
  \[
  W(\frac{1}{2})-W(0)=\int_0^{\frac{1}{2}}\dfrac{h(\tau,W(\tau))}{\tau}\mathrm d\tau<-\int_0^{\frac{1}{2}}\dfrac{\epsilon}{\tau}\mathrm d\tau=-\infty.
  \]
  This becomes a contradiction and finishes the proof of uniqueness.

  Thirdly, we prove that $W$ converges to $\tan\frac{G(\infty)}{n}$ at infinity.
  By   \eqref{equ-temp-23} and \eqref{equ-temp-25} from previous steps, $W$ is bounded and monotone non-increasing in $[0,+\infty)$. Consequently, as in the proof of Lemma \ref{lem-ODEsolution-subsol}, $W$ converges to a finite limit $W(\infty)=\tan\frac{G(\infty)}{n}$ at infinity and $W-\tan\frac{G(\infty)}{n}$ remains positive for sufficiently large $r$.

  Eventually, we reveal the convergence speed of $W$ at infinity.
  Let
  \[
  t:=\ln r\in (-\infty,+\infty)\quad\text{and}\quad\varphi(t):=W(r(t))-\tan\frac{G(\infty)}{n}.
  \]
  By a direct computation,
  \[
  \varphi'(t)=W'(r(t))\cdot e^t=h\left(r(t),\varphi(t)+\tan\frac{G(\infty)}{n}\right)
  =:h_1(t,\varphi)+h_2(\varphi),
  \]
  where
\[
h_1(t,\varphi):=
h\left(e^t,\varphi +\tan\frac{G(\infty)}{n}\right)-
h\left(\infty,\varphi +\tan\frac{G(\infty)}{n}\right),
\]
and
\[
h_2(\varphi):=
h\left(\infty,\varphi+\tan\frac{G(\infty)}{n}\right).
\]
By \eqref{equ-property-h-temp} and the result from previous step that $W$ converges to $\tan\frac{G(\infty)}{n}$ at infinity, there exist $R$ even larger than the one in Lemma \ref{lem-Def-h}  and $C>0$ such that
\begin{equation}\label{equ-temp-9}
e^{-\beta t}\leq h_1(t,\varphi(t))\leq C e^{-\beta t},\quad\forall~t\geq \ln R.
\end{equation}
Furthermore, by a direct computation and \eqref{equ-Property-h-3},
\begin{equation}\label{equ-temp-10}
h_2'(0)=\dfrac{\partial h}{\partial w}\left(\infty,\tan\frac{G(\infty)}{n}\right)=-n\quad\text{and hence}\quad
|h_2(\varphi)+n\varphi|=O(\varphi^2)
\end{equation}
as $\varphi\rightarrow 0.$
Applying estimates \eqref{equ-temp-9}, \eqref{equ-temp-10} and  the asymptotic stability of ODE as Lemma \ref{lem-temp-3} below,
the desired asymptotic behavior of $W$ at infinity follows from \eqref{equ-temp-21} in Lemma \ref{lem-temp-3} and this finishes the proof of this lemma.
\end{proof}

\begin{lemma}\label{lem-temp-3}
  Let $n\geq 3$, $0<\beta\leq 2$ and $\varphi$ be a non-negative solution to
  \[
  \varphi'=-n\varphi+H_1(t,\varphi)+H_2(\varphi)\quad\text{in }t>1,
  \]
  where there are constants $c,c'>0$ such that
  \[
  ce^{-\beta t}\leq H_1(t,\varphi)\leq c'e^{-\beta t},\quad\forall~t>1,
  \]
   \[
  H_2(\varphi)=O(\varphi^2)\quad\text{as }\varphi\rightarrow 0,\quad\text{and}\quad
  \varphi(t)\rightarrow 0\quad\text{as }t\rightarrow\infty.
  \]
  Then  there exist $C_1, C_2, T>0$ such that
  \begin{equation}\label{equ-temp-21}
  C_1e^{-\beta t}\leq \varphi(t)\leq C_2e^{-\beta t},\quad\forall~t>T.
  \end{equation}
\end{lemma}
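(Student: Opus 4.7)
The plan is to apply a standard integrating-factor (variation-of-constants) argument to the ODE inequality, exploiting the numerical fact that $n - \beta \geq n - 2 \geq 1 > 0$: this strict gap between the linear restoring rate and the decay rate of the forcing is precisely what forces both envelopes $C_1 e^{-\beta t}$ and $C_2 e^{-\beta t}$ to appear from opposite Gronwall-type comparisons.

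First I would fix any $\varepsilon \in (0, n - \beta)$, which is nonempty since $n \geq 3 > 2 \geq \beta$. Using $\varphi(t) \to 0$ together with $H_2(\varphi) = O(\varphi^2)$, there exists $T_0 \geq 1$ with $|H_2(\varphi(t))| \leq \varepsilon\, \varphi(t)$ for every $t \geq T_0$. Combining this with the hypothesized sandwich $c e^{-\beta t} \leq H_1(t,\varphi) \leq c' e^{-\beta t}$, the ODE yields the two-sided differential inequality
\begin{equation*}
-(n+\varepsilon)\,\varphi + c\,e^{-\beta t} \;\leq\; \varphi'(t) \;\leq\; -(n-\varepsilon)\,\varphi + c'\,e^{-\beta t}, \qquad t \geq T_0.
\end{equation*}

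Next, for the upper bound I would introduce the rescaled quantity $\psi(t) := e^{\beta t}\varphi(t)$. A direct computation turns the right-hand inequality into $\psi'(t) \leq -(n-\varepsilon-\beta)\,\psi(t) + c'$, a linear ODE inequality whose linear coefficient is strictly negative by the choice of $\varepsilon$. Multiplying by the integrating factor $e^{(n-\varepsilon-\beta)t}$ and integrating from $T_0$ to $t$ gives
\begin{equation*}
\psi(t) \;\leq\; e^{-(n-\varepsilon-\beta)(t-T_0)}\psi(T_0) + \frac{c'}{n-\varepsilon-\beta}\bigl(1 - e^{-(n-\varepsilon-\beta)(t-T_0)}\bigr),
\end{equation*}
which is bounded by some $C_2$ for all $t \geq T_0$; reverting to $\varphi$ gives $\varphi(t) \leq C_2 e^{-\beta t}$. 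The lower bound is entirely symmetric: the same substitution converts the left-hand inequality into $\psi'(t) \geq -(n+\varepsilon-\beta)\,\psi(t) + c$, whose integrating-factor solution is bounded below by $\frac{c}{n+\varepsilon-\beta}$ up to a transient term decaying like $e^{-(n+\varepsilon-\beta)(t-T_0)}$. Hence, for $t$ sufficiently large, $\psi(t) \geq \frac{c}{2(n+\varepsilon-\beta)} =: C_1 > 0$, and therefore $\varphi(t) \geq C_1 e^{-\beta t}$.

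I do not anticipate a serious obstacle. The assumption $\beta \leq 2 < n$ supplies the essential spectral gap between restoring and forcing rates, which is what makes the integrating factor produce bounded positive limits. The non-negativity of $\varphi$ (assumed in the statement) is used only to preserve the direction of inequalities when multiplying through by the exponential factors. The one delicate book-keeping issue is selecting $\varepsilon$ so that simultaneously $\varepsilon < n - \beta$ and $|H_2(\varphi(t))| \leq \varepsilon\,\varphi(t)$ on the tail; since $\varepsilon$ can be chosen first in terms of $n$ and $\beta$ alone, this is arranged by enlarging $T_0$.
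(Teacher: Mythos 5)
Your proof is correct and uses the same integrating-factor (variation of constants) mechanism as the paper, exploiting the spectral gap $n > \beta$ so that the forcing $e^{-\beta t}$ dominates the transient. The only cosmetic difference is that the paper proceeds in two passes -- first a crude upper bound $\varphi \le Ce^{-\beta t}$ using $H_2(\varphi) \le \tfrac12\varphi$, then a sharpened two-sided pass using $H_2(\varphi) = O(\varphi^2) = O(e^{-2\beta t})$ with the exact coefficient $-n$ -- whereas your single $\varepsilon$-absorption $|H_2(\varphi)| \le \varepsilon\varphi$ with $\varepsilon < n-\beta$ reaches the same conclusion in one step.
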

\begin{proof}
By the asymptotic behavior of $\varphi$ at infinity,  there exists $T_1>1$ such that
\[
H_2(\varphi(t)) \leq\frac{1}{2}\varphi(t),\quad\forall~t>T_1.
\]
Hence
  \[
  \varphi'\leq -(n-\frac{1}{2})\varphi+c'e^{-\beta t},\quad\forall~t>T_1.
  \]
  Multiplying both sides by $e^{(n-\frac{1}{2})t}$ and taking integral over $(T_1,t)$, there exists $C>0$ such that
  \[
  0\leq \varphi\leq
  Ce^{-(n-\frac{1}{2})t}+Ce^{-\beta t}\leq Ce^{-\beta t}.
  \]
  Consequently by $H_2(\varphi)=O(\varphi^2)$ as $\varphi\rightarrow 0$,
  there exist $T_2>T_1$ and $C>0$ such that
  \[
  \left\{
  \begin{array}{llll}
  \varphi'\leq -n\varphi+c'e^{-\beta t}+Ce^{-2\beta t},\\
  \varphi'\geq -n\varphi+ce^{-\beta t}-Ce^{-2\beta t},\\
  \end{array}
  \right.\quad\forall~t>T_2.
  \]
  Multiplying both sides by $e^{nt}$ and taking integral over $(T_2,t)$, the desired estimate follows immediately.
\end{proof}

With the help of Lemmas \ref{lem-Def-h}, \ref{lem-ODEsolution-radial} and
\ref{lem-temp-3},  we have the following result on the existence of radially symmetric solution with its asymptotic behavior at infinity.
\begin{lemma}\label{lem-radialSol}
  Let $n\geq 3, 0<\beta\leq 2$ and $g(x)=G(r)$ be the radially symmetric function as in \eqref{equ-example-g}. Then any radially symmetric classical solution $u$ to \eqref{equ-Lagrangian} is of form $u=u_0+c$, $c\in \mathbb R$, where $u_0(0)=0$. Furthermore, there exist  $C_3,C_4>0$ such that
  \begin{equation}\label{equ-Asymptotic-u0}
  C_3k(x)\leq u_0(x)- \frac{1}{2}\tan\left(\frac{G(\infty)}{n}\right)|x|^2
  \leq C_4k(x),
  \end{equation}
  for sufficiently large $|x|$, where
  \[
  k(x)=\left\{
  \begin{array}{llll}
    |x|^{2-\beta}, & \text{if }\beta\neq 2,\\
    \ln|x| , & \text{if }\beta=2.\\
  \end{array}
  \right.
  \]
\end{lemma}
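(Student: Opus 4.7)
The plan is to reduce the problem to the ODE studied in Lemmas \ref{lem-Def-h} and \ref{lem-ODEsolution-radial} and then integrate the resulting profile.

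First, I would exploit radial symmetry to obtain an ODE. Writing $u(x)=U(|x|)$ with $U\in C^2([0,\infty))$ and $U'(0)=0$, the eigenvalues of $D^2u$ are $U''(r)$ and $U'(r)/r$ (the latter with multiplicity $n-1$). Setting $W(r):=U'(r)/r$, continuously extended by $W(0):=U''(0)$, we have $U''=W+rW'$, so equation \eqref{equ-Lagrangian} becomes exactly \eqref{equ-temp-20}. Passing to $r=0$ forces $n\arctan W(0)=G(0)$, i.e., $W(0)=\tan(G(0)/n)$. By Lemma \ref{lem-Def-h}, \eqref{equ-temp-20} is equivalent to $rW'=h(r,W)$ for $r>0$, so $W$ solves the ODE \eqref{equ-temp-11}.

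Next, I would argue $W\in C^{1}([0,\infty))$ and invoke uniqueness. On $[0,1]$ Lemma \ref{lem-Def-h} gives $h(r,w)\equiv h(0,w)$, and \eqref{equ-Property-h-1}, \eqref{equ-Property-h-3} yield the expansion $h(0,w)=-n(w-\tan(G(0)/n))+O((w-\tan(G(0)/n))^{2})$; combining this with $W(r)-W(0)=O(r)$ from the $C^{2}$ regularity of $U$, the quotient $h(r,W(r))/r$ admits a finite limit as $r\to 0$, so $W$ extends to a $C^{1}$ solution of \eqref{equ-temp-11} on $[0,\infty)$. The uniqueness clause of Lemma \ref{lem-ODEsolution-radial} then identifies $W$ with the canonical solution constructed there. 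Consequently $U(r)=U(0)+\int_{0}^{r}sW(s)\,\mathrm{d}s$, and setting $c:=U(0)$ and $u_0(x):=\int_{0}^{|x|}sW(s)\,\mathrm{d}s$ yields $u=u_0+c$ with $u_0(0)=0$, proving the structural part.

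Finally, for the asymptotic estimate I would write
\begin{equation*}
u_0(x)-\tfrac{1}{2}\tan(G(\infty)/n)|x|^{2}=\int_{0}^{|x|}s\bigl(W(s)-\tan(G(\infty)/n)\bigr)\,\mathrm{d}s,
\end{equation*}
choose $R$ as in Lemma \ref{lem-ODEsolution-radial} so that $C_{1}s^{-\beta}\leq W(s)-\tan(G(\infty)/n)\leq C_{2}s^{-\beta}$ for $s>R$, and split the integral at $s=R$. The portion over $[0,R]$ contributes a finite constant $M$; on $[R,|x|]$ the integrand is sandwiched between $C_{1}s^{1-\beta}$ and $C_{2}s^{1-\beta}$, giving by direct integration a quantity of order $|x|^{2-\beta}$ if $\beta<2$ and of order $\ln|x|$ if $\beta=2$. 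Since $k(x)\to\infty$ in either case, the bounded constant $M$ and the boundary term at $R$ are absorbed into the final constants $C_{3},C_{4}$ for sufficiently large $|x|$, establishing \eqref{equ-Asymptotic-u0}.

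The main obstacle will be the second step: verifying that the function $W$ associated with a general radial classical solution really belongs to $C^{1}([0,\infty))$ so that the uniqueness statement of Lemma \ref{lem-ODEsolution-radial} applies. Handling the singular factor $1/r$ on the right-hand side of \eqref{equ-temp-11} at the origin requires the precise tangent-line information \eqref{equ-Property-h-3} for $h$; once this is in place, the remaining arguments are either direct identifications or elementary integration.
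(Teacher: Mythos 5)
Your route --- reduce to the radial ODE via $W=U'/r$, identify $W$ with the canonical solution through the uniqueness clause of Lemma \ref{lem-ODEsolution-radial}, then integrate the two-sided bounds from that lemma --- is exactly the paper's. The integration at the end, with the split at $R$ and absorption of the bounded constant into $C_3,C_4$ because $k(x)\to\infty$, is correct and matches what the paper leaves implicit.

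The one step that does not hold up as written is your argument that $W$ extends to a $C^1$ solution on $[0,\infty)$. The claim $W(r)-W(0)=O(r)$ does not follow from $C^2$ regularity of $U$: by the mean value theorem $W(r)-W(0)=U''(\xi_r)-U''(0)$ for some $\xi_r\in(0,r)$, which is only $o(1)$ unless $U''$ is Lipschitz. Moreover, even granting $O(r)$, boundedness of the quotient $h(r,W(r))/r$ does not give existence of the limit; producing $\lim_{r\to0}h(r,W(r))/r$ is equivalent to producing $W'(0)$, which is what you are trying to prove, so the argument is circular. There are two clean repairs. First, the supercritical phase makes the operator uniformly elliptic and concave at the $C^2$ solution and $g$ is smooth, so Evans--Krylov plus Schauder upgrade $u$ to $C^\infty$; a smooth radial function has an even Taylor expansion at the origin, whence $W=U'/r\in C^\infty([0,\infty))$. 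Second, and more economically, inspect the uniqueness proof in Lemma \ref{lem-ODEsolution-radial}: it only uses continuity of $W$ at $r=0$ together with $W\in C^1(0,\infty)$ solving $rW'=h(r,W)$ (the contradiction comes from $\int_\delta^{1/2}h(\tau,W(\tau))\,\tau^{-1}\,\mathrm{d}\tau\to-\infty$ versus boundedness of $W$ near $0$), and both of those properties are immediate from $U\in C^2$. Either way the identification with the canonical $W$ goes through, and the rest of your proof is sound.
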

\begin{proof}
  Let $W$ be the solution to \eqref{equ-temp-11} in Lemma \ref{lem-ODEsolution-radial}. From the choice of $h(r,w)$ from \eqref{equ-temp-5} in Lemma \ref{lem-Def-h}, $W$ satisfies equation \eqref{equ-temp-20}.
Choose
\[
u_0(x)=\int_0^{|x|}\tau\cdot W(\tau)\mathrm d\tau,\quad\text{then}\quad
\lambda(D^2u_0)=\left(W+rW',W,\cdots,W\right),\quad\forall~|x|>0.
\]
Thus $u_0$ is the unique radially symmetric solution to \eqref{equ-Lagrangian} in $\mathbb R^n\setminus\{0\}$ with $u_0(0)=0$, and all radially symmetric solutions are characterised by $u_0+c, c\in\mathbb R.$

Especially by \eqref{equ-origin-quadratic}, $W(r)$ remains a constant for $0\leq r\leq 1$ and hence $u_0$ is a quadratic function in $B_1$. Consequently, $u_0$ is a solution to \eqref{equ-Lagrangian} in $\mathbb R^n.$
By the results in Lemma \ref{lem-ODEsolution-radial}, the asymptotic behavior \eqref{equ-Asymptotic-u0} of $u_0$ follows immediately.
\end{proof}

\begin{proof}[Proof of Theorem \ref{thm-Optimality}]
Suppose there exists a classical solution $u$ to \eqref{equ-Lagrangian} in $\mathbb R^n$ satisfying asymptotic behavior \eqref{equ-ContraAssum} with $A=\tan \frac{G(\infty)}{n} I$ i.e.,
\[
u(x)- \frac{1}{2}\tan\left(\frac{G(\infty)}{n}\right)|x|^2 =o(1),\quad\text{as }|x|\rightarrow\infty.
\]

To start with, we prove that after a rotation, the function remains a solution to \eqref{equ-Lagrangian} in $\mathbb R^n$ with the same asymptotic behavior as $u$.
For any orthogonal matrix $Q$, we take $u_Q(x):=u(Qx)$, which satisfies
\[
D^2u_Q(x)=Q^TD^2u(Qx)Q\quad\text{and}\quad\lambda(D^2u_Q(x))=\lambda(D^2u(Qx))\quad
\text{in }x\in\mathbb R^n.
\]
Thus $u_Q$ satisfies
\[
F(D^2u_Q(x))=\sum_{i=1}^n\arctan\lambda_i(D^2u_Q(x))=g(Qx)=g(x)\quad\text{in }x\in \mathbb R^n,
\]
with asymptotic behavior
\[
u_Q(x)- \dfrac{1}{2}\tan\left(\frac{G(\infty)}{n}\right)|x|^2 =o(1),\quad\text{as }|x|\rightarrow\infty.
\]

Furthermore, we prove that $u$ is radially symmetric and hence the asymptotic behavior contradicts to \eqref{equ-Asymptotic-u0} in Lemma \ref{lem-radialSol}.
  For any $\epsilon>0$, by the asymptotic behavior of $u$ and $u_Q$, there exists $R>0$ such that
  \[
  \left|u(x)-u_Q(x)\right|<\epsilon,\quad\forall~|x|=R.
  \]
  By maximum principle such as Theorem 17.1 in \cite{Book-Gilbarg-Trudinger}, we have
  \[
  \left|u(x)-u_Q(x)\right|<\epsilon,\quad\forall~|x|\leq R.
  \]
  By the arbitrariness of $\epsilon>0$, we have $u=u_Q$ for all orthogonal matrix $Q$. Consequently $u$ is radially symmetry and this finishes the proof since the asymptotic behavior of $u$ contradicts to the result in Lemma \ref{lem-radialSol}.
\end{proof}

\small

\bibliographystyle{plain}

\bibliography{C:/Bib/Thesis}

\bigskip

\noindent J. Bao*, C. Wang

\medskip

\noindent  School of Mathematical Sciences, Beijing Normal University\\
Laboratory of Mathematics and Complex Systems, Ministry of Education\\
Beijing 100875, China \\[1mm]
Email: \textsf{jgbao@bnu.edu.cn, cwang@mail.bnu.edu.cn}

\medskip

\noindent Z. Liu

\medskip

\noindent Institute of Applied Mathematics, Department of Mathematics, Faculty of Science\\
Beijing University of Technology\\
Beijing 100124, China \\[1mm]
Email: \textsf{liuzixiao@bjut.edu.cn}

\end{document}